\DeclareMathOperator\supp{supp}
\numberwithin{equation}{section}
\newtheorem{lemma}{Lemma}[section]
\newtheorem{theorem}{Theorem}[section]
\newtheorem{corol}{Corollary}[section]
\newtheorem{prop}{Proposition}[section]
\newtheorem{rem}{\it Remark}[section]
\title{\textbf{Well-posedness for Fractional Growth-Dissipative Benjamin-Ono Equations}}
\author{ Ricardo Pastr\'an \thanks{Universidad Nacional de Colombia, sede Bogot\'a, E-mail: {\tt rapastranr@unal.edu.co}} \\
Oscar G. Ria\~no C. \thanks{IMPA - Instituto de Matem\'atica Pura e Aplicada,
E-mail: {\tt ogrianoc@impa.br}}}
\begin{document}

\maketitle 

\begin{abstract}
This paper is devoted to study the Cauchy problem for the fractional growth-dissipative BO equations $u_t+\mathcal{H}u_{xx}-(D_x^{\alpha}-D_x^{\beta})u+uu_x=0$. For a wide class of parameter $\beta>1$ and $0<\alpha <\beta$, taking into account dispersive and dissipative effects, we establish sharp well-posedness results in Sobolev spaces $H^s(\mathbb{R})$ and $H^s(\mathbb{T})$ which yield new well-posedness conclusions for some physical relevant equations. In addition, we study the behavior of solutions as $\alpha \to \beta$.  
 
\end{abstract}

\textit{Keywords:} Benjamin-Ono equation, dissipative-dispersive effects, Locally and Global well-posedness.
\section{Introduction and main results}
We study the initial value problem (IVP) for the following fractional growth-dissipative Benjamin-Ono (fDBO) equations 
\begin{equation}\label{fDBO}
\left\{
\begin{aligned}
&u_t+\mathcal{H}u_{xx}-(D_x^{\alpha}-D_x^{\beta})u+uu_x=0, \qquad && x\in \mathbb{R} \, (\text{or } x\in \mathbb{T})  \, , \quad t > 0,  \\
&u(x,0)=u_0(x), 
\end{aligned}
\right.
\end{equation}
where $u=u(x,t)$ is a real valued function,  the third and the fourth terms in \eqref{fDBO} will be considered as the growth and dissipation respectively, satisfying $0< \alpha < \beta$, the operator $D_x^s$ is defined via the Fourier transform by $\widehat{D_x^s\varphi}(\xi)=|\xi|^s\widehat{\varphi}(\xi)$ and $\mathcal{H}$ denotes the usual Hilbert transform given by
\begin{equation*}
    \mathcal{H}\varphi(x)=\dfrac{1}{\pi}\,\text{p.v.}\int_{-\infty}^{\infty}\dfrac{\varphi(y)}{x-y}\,dy= \bigl(-i\, \text{sgn}(\xi)\widehat{\varphi}(\xi)\bigr)^{\vee}(x),\quad\text{for}\; \xi \in \mathbb{R},\; \varphi\in \mathcal{S}(\mathbb{R}).
\end{equation*}

When $\alpha=\beta$, fDBO corresponds to the well-known Benjamin-Ono (BO) equation  
derived by Benjamin \cite{benjamin} and Ono \cite{Ono} as a model for long internal gravity waves in deep stratified fluids. The IVP associated to BO equation has been widely studied, see \cite{saut,Iorio,Ponce1991,MoliSautT,KochT,KenigKoenig,TaoBO,Kenig,MoliPeriodBO,molinetPilod} and re\-fe\-rences therein. Several authors have searched the minimal regularity, measured in the Sobolev scale $H^s(\mathbb{R})$, which guarantees that the IVP for BO is {\it locally} or {\it globally wellposed} (LWP and GWP, resp.). We say that an IVP is LWP in a functional space $X$ provided that for every initial data $u_0\in X$ there exists $T=T(\left\|u_0\right\|_{X})>0$ and a unique solution $u\in X_T \subset C([0,T];X)$ of the IVP such that the flow-map data solution is locally continuous from $X$ to $X_T$. If the above properties are true for any $T>0$, we say that the IVP is GWP. Let us recall some of them for the BO equation: in \cite{saut} LWP for $s>3$ was established, in \cite{Iorio} and \cite{Ponce1991} GWP for $s\geq 3/2$, in \cite{TaoBO} GWP when $s\geq 1$, and finally in \cite{Kenig,molinetPilod} GWP when $s\geq 0$ was proven. All these results have been obtained by compactness methods. This is a consequence of the results of Molinet, Saut and Tzvetkov in \cite{MoliSautT} who proved 
for all $s\in \mathbb{R}$ that the flow map $u_0\mapsto u$ is not of class $C^2$ at the origin from $H^s(\mathbb{R})$ to $H^s(\mathbb{R})$. In other words, they showed that one cannot solve the IVP for BO equation by a Picard iterative method implemented on its integral formulation for initial data in the Sobolev space $H^s(\mathbb{R})$, $s\in \mathbb{R}$. 
\\ \\
The fDBO equations \eqref{fDBO} generalize the Chen-Lee (CL) equation 
\begin{equation}\label{cl}
\left\{
\begin{aligned}
&u_t+ \mathcal{H}u_{xx} - (\mathcal{H}u_x+u_{xx}) +uu_x=0, \hspace{0.2cm} x\in \mathbb{R}\, (\text{or } x\in \mathbb{T}) \, , \quad t > 0, \\
&u(x,0)=u_0(x),
\end{aligned}
\right.
\end{equation} 
which corresponds to the case $\alpha=1$ and $\beta=2$ in fDBO. The model \eqref{cl} was first introduced by Chen and Lee in \cite{CL} to describe fluid and plasma turbulence and as a model for internal waves in a two-fluid system. 
\\ \\
When $\alpha=1$ and $\beta=3$,  \eqref{fDBO} becomes the following nonlocal perturbation of the BO (npBO) equation
\begin{equation}\label{npBO}
\left\{
\begin{aligned}
&u_t+\mathcal{H}u_{xx}-(\mathcal{H}u_x+\mathcal{H}u_{xxx})+uu_x=0, \hspace{0.2cm} x\in \mathbb{R}\, (\text{or } x\in \mathbb{T}) \, , \quad t > 0, \\
&u(x,0)=u_0(x).
\end{aligned}
\right.
\end{equation}
\\ \\
Another physically relevant equation within the class \eqref{fDBO} is obtained by choosing $\alpha=1$ and $\beta=4$,
\begin{equation}\label{dBO4}
\left\{
\begin{aligned}
&u_t+\mathcal{H}u_{xx}-(\mathcal{H}u_x-u_{xxxx})+uu_x=0, \hspace{0.2cm} x\in \mathbb{R}\, (\text{or } x\in \mathbb{T}) \, , \quad t > 0, \\
&u(x,0)=u_0(x).
\end{aligned}
\right.
\end{equation}
The models \eqref{cl}, \eqref{npBO} and \eqref{dBO4} have been used in fluids and plasma theory, see (40) in \cite{clq}. 
\\ \\
The form of these equations and the dispersive and dissipative effects involved have motivated us to define the fDBO model, in which we address well-posedness issues in the spaces $H^s(\mathbb{R})$ and $H^s(\mathbb{T})$ for arbitrary $\beta>0$ and $0<\alpha<\beta$. In addition, the structure of \eqref{fDBO} allows us to consider the behavior of solutions as $\alpha \to \beta^{-}$.

Let us now state our results. Initially, we set dissipation order $1<\beta<2$ with growth order $0<\alpha<\beta$. 
Our first consequence determinates LWP by means of a fixed-point argument on some Bourgain type spaces $X^{b,s}$ adapted to the dispersive-dissipative part of fDBO (see \eqref{BourgSpac} below). The advantage of using these spaces lies in the fact that they incorporate dispersion effects which seem to be stronger than those of dissipation and growth previously fixed. 
\begin{theorem}\label{globalwlowdis}
Let $1<\beta < 2$ with $0< \alpha < \beta$ fixed and $u_0 \in H^s(\mathbb{R})$, $s>-\beta/4$. Then for any time $T>0$ there exists a unique solution $u$ of the integral equation \eqref{inteq} in
\begin{align*}
&Z_T=C([0,T];H^s(\mathbb{R}))\cap X_T^{1/2,s}.
\end{align*}
Moreover, the flow map $u_0 \mapsto u(t)$ is smooth from $H^s(\mathbb{R})$ to $Z_T$ and $u$ belongs to $ C((0,T],H^{\infty}(\mathbb{R}))$.
\end{theorem}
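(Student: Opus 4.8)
The plan is to recast \eqref{fDBO} in its Duhamel form \eqref{inteq} and to solve it by a contraction mapping argument in the restricted space $X_T^{1/2,s}$, exploiting that the linear symbol splits into the Benjamin--Ono dispersive phase $\xi|\xi|$ and the real growth--dissipation symbol $|\xi|^\beta-|\xi|^\alpha$, which is positive (dissipative) at high frequencies and only bounded (growing) on compact frequency sets. Accordingly I would localize in time with a smooth cut-off $\psi$ and study the map $\Phi(u)=\psi(t)\,U(t)u_0-\psi(t)\int_0^t U(t-t')\,\tfrac12\partial_x(u^2)(t')\,dt'$, where $U(t)$ has Fourier symbol $e^{-it\xi|\xi|}e^{-t(|\xi|^\beta-|\xi|^\alpha)}$; a fixed point of $\Phi$ on a suitable ball of $X_T^{1/2,s}$ will be the desired solution.

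First I would establish the two linear building blocks adapted to the Bourgain spaces \eqref{BourgSpac}: the homogeneous estimate $\|\psi\,U(t)u_0\|_{X^{1/2,s}}\lesssim \|u_0\|_{H^s}$ and the inhomogeneous (Duhamel) estimate $\|\psi\int_0^t U(t-t')F\,dt'\|_{X^{1/2,s}}\lesssim \|F\|_{X^{-1/2,s}}$. Because the modulation weight in \eqref{BourgSpac} combines the dispersive modulation $\tau-\xi|\xi|$ with the dissipative real part $|\xi|^\beta-|\xi|^\alpha$, these follow from the now-standard machinery for dissipative Bourgain spaces; the dissipative real part is precisely what yields the extra smoothing that is unavailable for the purely dispersive BO equation.

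The heart of the argument, and the step I expect to be the main obstacle, is the bilinear estimate $\|\partial_x(uv)\|_{X^{-1/2,s}}\lesssim \|u\|_{X^{1/2,s}}\|v\|_{X^{1/2,s}}$ valid for $s>-\beta/4$. After duality and Plancherel this reduces to a weighted multiplier bound, which I would analyze by splitting the frequency domain $\xi=\xi_1+\xi_2$ according to the interaction type and the size of the resonance function $H=\xi|\xi|-\xi_1|\xi_1|-\xi_2|\xi_2|$. In the non-resonant regimes the sign analysis of $H$ (for BO, $|H|\sim |\xi_1\xi_2|$ or $|H|\sim |\xi_{\min}||\xi_{\max}|$ according to the signs of $\xi_1,\xi_2$) feeds the dispersive part of the modulation weights; in the near-resonant regimes, where $H$ is too small, I would instead use the dissipative lower bound $\langle\sigma\rangle\gtrsim \langle|\xi|^\beta\rangle$ coming from the real part of the weight. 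Balancing the derivative loss produced by $\partial_x$ against the gains from these two mechanisms is exactly what forces the threshold $s>-\beta/4$, and verifying its sharpness amounts to testing the estimate on the worst high-high interaction.

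With the three estimates in hand, the contraction property of $\Phi$ on a ball of $X_T^{1/2,s}$ is routine and produces a unique fixed point; the continuity statement $u\in C([0,T];H^s(\mathbb{R}))$ then follows from the embedding of the dissipative space \eqref{BourgSpac} into $C([0,T];H^s(\mathbb{R}))$, which at the endpoint $b=1/2$ is made possible by the dissipative weight. Since $\Phi$ is a composition of bounded linear maps and the quadratic (hence real-analytic) nonlinearity, smoothness of the flow map $u_0\mapsto u$ from $H^s(\mathbb{R})$ into $Z_T$ is immediate. To reach arbitrary $T>0$ I would complement the local construction with the a priori control furnished by the dissipation, and to obtain the regularization $u\in C((0,T],H^\infty(\mathbb{R}))$ I would run a bootstrap in which, for each fixed $t>0$, the factor $e^{-t|\xi|^\beta}$ in the propagator---which dominates the growth factor $e^{t|\xi|^\alpha}$ since $\beta>\alpha$---gains arbitrarily many derivatives, upgrading $H^s$ regularity to $H^{s'}$ for every $s'>s$.
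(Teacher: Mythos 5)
Your outline follows essentially the same route as the paper: a contraction on Bourgain-type spaces adapted to the symbol $i\xi|\xi|-(|\xi|^{\alpha}-|\xi|^{\beta})$, with the homogeneous and Duhamel linear estimates, a bilinear estimate at regularity $s>-\beta/4$ proved by splitting into resonant/non-resonant (high/low modulation) interactions and using the dissipative part of the weight where the dispersive modulation is small, followed by smoothing and globalization. Two points in your sketch, however, are under-specified in ways that matter for this particular equation.

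First, the contraction does not close with the endpoint bilinear estimate $\left\|\partial_x(uv)\right\|_{X^{-1/2,s}}\lesssim \left\|u\right\|_{X^{1/2,s}}\left\|v\right\|_{X^{1/2,s}}$ alone: the Duhamel operator is only bounded from $X^{-1/2+\delta,s}$ into $X^{1/2,s}$ (at $b=-1/2$ an extra weighted $L^1_\tau$ term appears), and you need a small factor to contract, since the data are not assumed small. The paper proves $\left\|\partial_x(uv)\right\|_{X^{-1/2+\delta,s}}\lesssim T^{\nu}\left\|u\right\|_{X^{1/2,s}}\left\|v\right\|_{X^{1/2,s}}$ for functions compactly supported in $[-T,T]$, the factor $T^{\nu}$ coming from a separate lemma trading a power of the modulation weight for a power of $T$. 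You should state where your smallness comes from.

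Second, the globalization cannot rest on ``a priori control furnished by the dissipation.'' The operator $D_x^{\alpha}-D_x^{\beta}$ is \emph{not} dissipative at low frequencies: the energy identity gives $\tfrac12\tfrac{d}{dt}\left\|u\right\|_{L^2}^2=\left\|D^{\alpha/2}u\right\|_{L^2}^2-\left\|D^{\beta/2}u\right\|_{L^2}^2\leq \left\|u\right\|_{L^2}^2$, so the $L^2$ norm may grow exponentially (Gronwall), and no a priori bound in $H^s$ is available for the range of $s$ considered. To iterate the local result one must arrange the local existence time to depend only on a quantity controlled by this $L^2$ bound; since $s$ may be positive, the paper runs the contraction in the two-norm space $\left\|u\right\|_{X^{1/2,s_c^{+}}}+\gamma\left\|u\right\|_{X^{1/2,s}}$ with $s_c^{+}\in(-\beta/4,\min\{0,s\})$, so that $T=T(\left\|u_0\right\|_{H^{s_c^{+}}})$ and $\left\|u(t)\right\|_{H^{s_c^{+}}}\leq\left\|u(t)\right\|_{L^2}$. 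Without this (or an equivalent device) the passage from local to arbitrary $T$ does not go through as written.
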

Next we consider the case with dissipation order $\beta\geq 2$ and growth order $0<\alpha<\beta$. Here the fDBO behaves as a pure dissipative model and so we can implement techniques relaying mostly on the dissipative term. 
\begin{theorem}\label{globalw}
Let $\beta \geq 2$ and $0<\alpha < \beta$ fixed. Consider $u_0 \in H^s(\mathbb{R})$ where $s>\max\left\{3/2-\beta,-\beta/2\right\}$. Then for any time $T>0$ there exists a unique solution $u$ of the integral equation \eqref{inteq} in
\begin{align*}
&W_T=C([0,T];H^s(\mathbb{R}))\cap Y_T^{s}.
\end{align*}
Moreover, the flow map $u_0 \mapsto u(t)$ is smooth from $H^s(\mathbb{R})$ to $W_T$ and $u$ belongs to $ C((0,T],H^{\infty}(\mathbb{R}))$.
\end{theorem}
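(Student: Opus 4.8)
The plan is to solve the integral equation \eqref{inteq} by a contraction argument in $W_T=C([0,T];H^s(\mathbb{R}))\cap Y_T^s$, exploiting the smoothing produced by the dissipative symbol. Writing the linear propagator through its Fourier multiplier, the semigroup $\{S(t)\}_{t\geq 0}$ associated to $\partial_t+\mathcal{H}\partial_x^2-(D_x^\alpha-D_x^\beta)$ has symbol $e^{t(-i\xi|\xi|+|\xi|^\alpha-|\xi|^\beta)}$, whose modulus equals $e^{t(|\xi|^\alpha-|\xi|^\beta)}$. Since $0<\alpha<\beta$, Young's inequality gives $|\xi|^\alpha-|\xi|^\beta\leq C_{\alpha,\beta}-\tfrac12|\xi|^\beta$, hence
$$|\widehat{S(t)f}(\xi)|\leq e^{C_{\alpha,\beta}t}\,e^{-\frac t2|\xi|^\beta}\,|\widehat f(\xi)|,\qquad t\geq 0.$$
This is the workhorse bound: the growth term contributes only a harmless factor $e^{Ct}$, while the dissipation yields the parabolic gain $\|D_x^\rho S(t)f\|_{H^s}\lesssim e^{Ct}t^{-\rho/\beta}\|f\|_{H^s}$ for every $\rho\geq 0$. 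First I would record the linear estimates $\|S(\cdot)u_0\|_{C([0,T];H^s)}\lesssim\|u_0\|_{H^s}$ and $\|S(\cdot)u_0\|_{Y_T^s}\lesssim\|u_0\|_{H^s}$, together with the inhomogeneous Duhamel bound for $\int_0^{\cdot}S(\cdot-t')F(t')\,dt'$ in $W_T$ in terms of the natural dual norm of $F$, all of which follow from the multiplier estimate above.

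The heart of the argument is the bilinear estimate for the nonlinearity $\tfrac12\partial_x(u^2)$. I would prove that
$$\Big\|\int_0^{\cdot}S(\cdot-t')\,\partial_x(uv)(t')\,dt'\Big\|_{W_T}\lesssim T^{\theta}\,\|u\|_{W_T}\|v\|_{W_T}$$
for some $\theta>0$, the key point being that the derivative in $\partial_x$ is absorbed by the parabolic smoothing: it costs a factor $t'^{-1/\beta}$, which is integrable since $\beta\geq 2$, and the remaining regularity bookkeeping is carried out on the Fourier side. Splitting into high and low frequencies and balancing the time singularity near $t'=0$ against the smoothing, the condition under which this estimate closes is exactly the subcritical scaling constraint $s>3/2-\beta$ (the scaling exponent of $u_t+D_x^\beta u+uu_x=0$), while $s>-\beta/2$ reflects the low-frequency balance in the same estimate, i.e.\ the range of $s$ for which the dissipative smoothing still controls the nonlinear interaction in the $L^2$-based scale defining $Y_T^s$. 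This bilinear estimate is the step I expect to be the main obstacle, since it must simultaneously reconcile the one-derivative loss of the Burgers-type nonlinearity, the integrable-but-singular time weights, and the negativity of $s$.

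Granting the linear and bilinear estimates, the map $\Phi(u)=S(\cdot)u_0-\tfrac12\int_0^{\cdot}S(\cdot-t')\partial_x(u^2)(t')\,dt'$ is a contraction on a ball of $W_T$ once $T$ is small in terms of $\|u_0\|_{H^s}$; the Banach fixed point theorem then gives a unique local solution, and, since $\Phi$ is affine in $u_0$ and polynomial in $u$, the flow map depends analytically on the data and is in particular $C^\infty$. Uniqueness in the ball is promoted to uniqueness in $W_T$ by a standard continuation argument.

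To reach arbitrary $T>0$ I would globalize using the dissipation. The representative computation is the $L^2$ energy identity: testing the equation against $u$, the dispersive term $\mathcal{H}u_{xx}$ and the convective term $uu_x$ contribute nothing, leaving $\tfrac12\frac{d}{dt}\|u\|_{L^2}^2=\|D_x^{\alpha/2}u\|_{L^2}^2-\|D_x^{\beta/2}u\|_{L^2}^2$; interpolating $\|D_x^{\alpha/2}u\|_{L^2}^2\leq\tfrac12\|D_x^{\beta/2}u\|_{L^2}^2+C\|u\|_{L^2}^2$ yields $\|u(t)\|_{L^2}\leq e^{Ct}\|u_0\|_{L^2}$. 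The same scheme at the $H^s$ level (for $s\geq 0$, using the coercive term $\|D_x^{s+\beta/2}u\|_{L^2}^2$ to absorb both the growth and, via a commutator estimate, the nonlinearity; for $s\leq 0$, using $\|\cdot\|_{H^s}\lesssim\|\cdot\|_{L^2}$ once the solution becomes $L^2$) produces an a priori bound growing at most exponentially in $t$, so the local solution, whose existence time depends only on the $H^s$ norm of the data, extends to every $T$. Finally, the instantaneous smoothing $u\in C((0,T];H^\infty(\mathbb{R}))$ follows by a bootstrap: for $t>0$ the factor $e^{-\frac t2|\xi|^\beta}$ gains arbitrarily many derivatives, so iterating the Duhamel formula upgrades the regularity from $H^s$ to $H^{s+\rho}$ for every $\rho\geq 0$, as in the standard parabolic regularization scheme.
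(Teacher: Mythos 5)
Your plan coincides with the paper's own scheme (a Dix/Molinet--Ribaud-type dissipative argument: the pointwise bound $e^{(|\xi|^{\alpha}-|\xi|^{\beta})t}\le \psi_{\alpha,\beta}(t)\,e^{-|\xi|^{\beta}t/2}$ to neutralize the growth term, linear and Duhamel estimates in $Y_T^{s}$, contraction, $L^2$ energy identity for globalization, parabolic bootstrap for the $H^{\infty}$ smoothing), and the steps you actually carry out are sound. The one genuine gap is the step you yourself flag: the bilinear estimate is asserted rather than proved, and the mechanism you sketch for it only covers $s\ge 0$. For $s<0$ the product of two $H^{s}$ functions is not controlled by any Sobolev multiplication law; the paper's way out (Propositions \ref{aprop1.1} and \ref{aprop2}) is to bound $\|\mathcal{F}(uv)(\tau)\|_{L^{\infty}_{\xi}}\le\|u(\tau)v(\tau)\|_{L^{1}}\le\|u(\tau)\|_{L^{2}}\|v(\tau)\|_{L^{2}}\le \tau^{-2|s|/\beta}\|u\|_{Y_T^{s}}\|v\|_{Y_T^{s}}$ --- this is exactly what the second component $t^{|s|/\beta}\|u(t)\|_{L^{2}}$ of the norm \eqref{aeq5} is for, and your proposal never invokes that component --- and then to load the derivative and the Sobolev weight onto the kernel through $\bigl\||\xi|\langle\xi\rangle^{s}e^{(|\xi|^{\alpha}-|\xi|^{\beta})t}\bigr\|_{L^{2}_{\xi}}\lesssim \psi_{\alpha,\beta}(t)\,t^{-r/2}$ for any $r>\max\{(3+2s)/\beta,0\}$. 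The resulting singular integral $\int_0^t(t-\tau)^{-r/2}\tau^{-2|s|/\beta}\,d\tau$ converges iff $r<2$ and $|s|<\beta/2$, and produces a positive power of $T$ iff one can also take $r<2+4s/\beta$, i.e.\ iff $s>3/2-\beta$; this is where both thresholds really come from, and neither is visible from the accounting ``$\partial_x$ costs $t'^{-1/\beta}$'' (even at $s=0$ the kernel is measured in $L^{2}_{\xi}$ and costs $t'^{-3/(2\beta)}$, which is precisely why $\beta>3/2$ is needed). One must also run the companion estimate for the $t^{|s|/\beta}\|\cdot\|_{L^{2}}$ component of the output, which imposes $s>-\beta/2$ a second time.

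A smaller caveat: your globalization ``at the $H^{s}$ level'' for $s\ge0$ only closes if the commutator and nonlinear terms are interpolated all the way down to the globally controlled $L^{2}$ norm against the coercive term $\|D_x^{\beta/2}J^{s}u\|_{L^{2}}^{2}$ (this uses $\beta>3/2$ again); interpolating only to $\|u\|_{H^{s}}$ leaves a superquadratic term and a Riccati-type inequality that does not give an exponential bound. The paper avoids this entirely by combining the global $L^{2}$ bound with the fact that the local existence time depends only on the $H^{s'}$ norm for an admissible $s'\le 0$, and then propagating regularity by the instantaneous $H^{\infty}$ smoothing --- an argument you already have at your disposal from your $s\le 0$ case.
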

The proof of Theorems \ref{globalwlowdis} and \ref{globalw} is based on the ideas in \cite{vento2011,TaokZtheo} and in \cite{Dix} respectively. A main difference in our arguments is the inclusion of the growth term in \eqref{fDBO} which yields additional difficulties due to its iteration with the dissipation. For instance, one has that the operator $D_x^{\alpha}-D_x^{\beta}$ is not completely dissipative since its Fourier symbol $|\xi|^{\alpha}-|\xi|^{\beta}$ is non-negative for frequencies $|\xi|\leq 1$.

\begin{rem} 
 Considering $\beta>1$ fixed, we have that the results in Theorems \ref{globalwlowdis} and \ref{globalw} are independent of the growth term in the sense that the Sobolev regularity attained only depends on the dissipation order $\beta$. Actually, our conclusions are similar to those achieved by Vento in \cite{vento2011} for the dissipative Benjamin-Ono equations (obtained by removing $D_x^{\alpha}$ from  \eqref{fDBO}).
\end{rem}

As it has been determined for other dispersion-dissipative models (see for instance \cite{Pilod,PR,xma}), one may ask for optimally for the results in Theorem \ref{globalwlowdis} and \ref{globalw}  measured by the regularity of the data-solution mapping associated to \eqref{fDBO}. In this direction we provide the following result.

\begin{theorem}\label{illpos}

\begin{itemize}
    \item[(i)] Let $\beta \geq 1$, $0<\alpha<\beta$ and assume that $s<-\beta/2$. Then there does not exist any time $T>0$ such that the Cauchy problem \eqref{fDBO} admits a unique local solution defined on the interval $[0,T]$ and such that the flow-map $u_0\mapsto u$ is $C^2$ differentiable at zero from $H^s(\mathbb{R})$ to $C\left([0,T];H^s(\mathbb{R})\right)$.
   \item[(ii)] Let $\beta \geq 1$, $0<\alpha<\beta$ and assume that $s<\min\left\{3/2-\beta, -\beta/4\right\}$. Then there does not exist any time $T>0$ such that the Cauchy problem \eqref{fDBO} admits a unique local solution defined on the interval $[0,T]$ and such that the flow-map $u_0\mapsto u$ is $C^3$ differentiable at zero from $H^s(\mathbb{R})$ to $C\left([0,T];H^s(\mathbb{R})\right)$.
   \item[(iii)] Let $0< \alpha <\beta <1$,  and $s\in \mathbb{R}$. There does not exist $T>0$ such that the IVP \eqref{fDBO} admits a unique local solution defined on the interval $[0,T]$ and such that the flow map $u_0\mapsto u$ is of class $C^2$ in a neighborhood of the origin from $H^s(\mathbb{R})$ to $H^s(\mathbb{R})$.
\end{itemize}
\end{theorem}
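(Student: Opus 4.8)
The plan is to rule out $C^2$ (resp. $C^3$) differentiability by contradiction, reducing each assertion to the failure of a multilinear estimate for the Duhamel formulation \eqref{inteq}. Write $S(t)$ for the linear propagator, the Fourier multiplier with symbol $e^{tp(\xi)}$ where $p(\xi)=-i\xi|\xi|+|\xi|^{\alpha}-|\xi|^{\beta}$, so that \eqref{inteq} reads $u(t)=S(t)u_0-\int_0^tS(t-t')\,\partial_x\!\big(\tfrac12 u^2\big)(t')\,dt'$. If the flow map were $C^2$ at the origin, its second derivative would be bounded, forcing the bilinear bound $\sup_{[0,T]}\|B_2(u_0)\|_{H^s}\lesssim\|u_0\|_{H^s}^2$, where $B_2(u_0)(t)=\int_0^tS(t-t')\,\partial_x\big((S(t')u_0)^2\big)\,dt'$ is the second Picard iterate; $C^3$ differentiability similarly forces a trilinear bound on the third iterate $B_3$. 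I would then exhibit, for $s$ below the stated thresholds, a family $u_0=u_{0,N}$ with $\|u_{0,N}\|_{H^s}\sim 1$ for which $\|B_2(u_{0,N})\|_{H^s}$ (resp. $\|B_3\|_{H^s}$) diverges as $N\to\infty$.

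For (i) I would take data Fourier-supported on two unit bands at $\pm N$, namely $\hat u_{0,N}=N^{-s}(\chi_{[N,N+1]}+\chi_{[-N-1,-N]})$, which forces $\|u_{0,N}\|_{H^s}\sim1$. The relevant interaction is high-high $\to$ low: for $\xi_1\approx N$, $\xi_2\approx -N$ the output $\xi=\xi_1+\xi_2$ lies in $[-1,1]$, and carrying out the $t'$-integral produces the factor $\big(e^{tp(\xi)}-e^{t(p(\xi_1)+p(\xi_2))}\big)/q$ with $q=p(\xi)-p(\xi_1)-p(\xi_2)$. The real part of $q$ is the growth-dissipation combination $\approx 2N^{\beta}$, while its imaginary part is the dispersive resonance $\xi|\xi|-\xi_1|\xi_1|-\xi_2|\xi_2|\approx 2N\xi$; since $\beta\ge1$ and $|\xi|\le1$ the dissipative part dominates, $|q|\approx N^{\beta}$, whereas for fixed $t>0$ one has $e^{t(p(\xi_1)+p(\xi_2))}\to0$ and $|e^{tp(\xi)}|\ge1$ (the symbol has nonnegative real part near $\xi=0$). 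This yields the pointwise lower bound $|\widehat{B_2}(t,\xi)|\gtrsim|\xi|\,N^{-\beta}N^{-2s}$ on a triangle supported in $[-1,1]$, whence $\|B_2(t)\|_{H^s}\gtrsim N^{-2s-\beta}$, which blows up precisely when $s<-\beta/2$.

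For (ii) the same scheme is applied to the third iterate $B_3$, which is trilinear in $u_0$ and carries two copies of $\partial_x$ together with two resonance denominators of the type above. The two thresholds in the statement correspond to two different frequency configurations of the three interacting modes: one is tuned to saturate the regularity $-\beta/4$ of the dispersion-dominated regime (Theorem \ref{globalwlowdis}), the other to saturate $3/2-\beta$ of the dissipation-dominated regime (Theorem \ref{globalw}). For each I would select a frequency-localized family $u_{0,N}$ and bound $\|B_3(u_{0,N})\|_{H^s}$ from below, obtaining divergence whenever $s$ lies below the smaller of the two values. The nested time integral and the product of two denominators make the bookkeeping heavier than in (i), but the mechanism is identical.

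Part (iii) is the delicate one and follows the Benjamin-Ono mechanism of \cite{MoliSautT}. When $\beta<1$ the model is a lower-order perturbation of BO: the dispersion $\xi|\xi|$ and the first-order derivative in the nonlinearity both outweigh the growth-dissipation symbol $|\xi|^{\alpha}-|\xi|^{\beta}$, so the dissipative part of $q$ can no longer produce any gain and cannot absorb the derivative loss created by $\partial_x$ in the Picard scheme. Concretely, the factors $e^{t(|\xi|^{\alpha}-|\xi|^{\beta})}$ remain of order one on a suitable time window and the analysis is governed purely by the dispersive resonance, exactly as for BO. I would adapt the MST counterexample, comparing the flow in a neighborhood of the origin and choosing data spread over well-separated frequency scales so that the obstruction becomes independent of $s$, to show that the required bilinear bound fails for every $s\in\mathbb{R}$. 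The main obstacle throughout is the lower-bound step: one must verify that the oscillatory-dissipative integral over $\xi_1$ does not cancel (the phase of $q$ varies slowly across the relevant support) and, for (iii), engineer the $s$-independence while keeping the dissipative exponentials harmless; the sign-indefiniteness of $|\xi|^{\alpha}-|\xi|^{\beta}$ noted after Theorem \ref{globalw} is precisely what must be tracked in controlling $q$.
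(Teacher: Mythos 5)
Your general framework (Picard iterates $u_2,u_3$, the bound $\|u_k(t)\|_{H^s}\lesssim\|u_0\|_{H^s}^k$ forced by $C^k$ regularity of the flow, and frequency-localized data) is exactly the paper's, and your part (i) is essentially complete: the paper uses the same two-bump datum $\widehat{u_0}=N^{-s}\omega^{-1/2}(\chi_{I_N}+\chi_{-I_N})$ and the same high-high-to-low interaction with $|\sigma|\sim N^{\beta}$. The only difference is that you keep $t$ fixed and use $|e^{\sigma t}-1|\gtrsim 1$, whereas the paper takes $t_N=N^{-\beta-\epsilon}$ and Taylor-expands so that $\frac{e^{\sigma t_N}-1}{\sigma}=t_N+O(N^{-\beta-2\epsilon})$, which sidesteps the non-cancellation issue you flag (the leading term $t_N$ is a positive real number). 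Both routes give $\|u_2\|_{H^s}\gtrsim N^{-2s-\beta-\epsilon}$ and the conclusion for $s<-\beta/2$.

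Parts (ii) and (iii), however, are only plans, and the missing ingredients are precisely the ones that produce the stated thresholds. For (ii) the paper outputs at \emph{high} frequency $\xi\in[N+3\omega,N+4\omega]$ (not low), and the two regularity barriers come from two different choices of the band width: $\omega=N^{\beta/2}$ when $1\leq\beta<2$, tuned so that the second-level resonance satisfies $\operatorname{Im}\eta\sim\omega^2\sim N^{\beta}\sim|\operatorname{Re}\eta|$, yielding $\|u_3(t_N)\|_{H^s}\gtrsim N^{-2s-\beta/2-\epsilon}$ and hence the $-\beta/4$ threshold; and $\omega=\epsilon_1N$ when $\beta\geq2$, yielding $N^{-2s+3-2\beta-2\epsilon}$ and the $3/2-\beta$ threshold. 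Moreover $\widehat{u_3}$ involves the \emph{difference} $\frac{e^{\eta t}-1}{\eta}-\frac{e^{\sigma t}-1}{\sigma}$; for $1\leq\beta<2$ one must show the second term is strictly lower order ($|\sigma(\xi,\xi_2)|\gtrsim\omega N$ versus $|\eta|\sim N^{\beta}$, using $\beta<2$), while for $\beta\geq2$ the two terms are comparable and the paper must pass to real parts and run a delicate sign analysis of $\operatorname{Re}\sigma$ on each piece $K_{\xi}^1,K_{\xi}^2,K_{\xi}^3$ to prevent cancellation. None of this is in your outline, and "the mechanism is identical to (i)" does not recover these exponents. For (iii) you correctly name the Molinet--Saut--Tzvetkov low-high mechanism and the $s$-independence from frequency separation, but the quantitative heart is absent: the paper takes $I_1=[\omega/2,\omega]$, $I_2=[N,N+\omega]$ with the specific width $\omega=N^{\beta-1}$, which forces $\operatorname{Im}\sigma\sim\omega N=N^{\beta}\gtrsim|\operatorname{Re}\sigma|$ and hence $|\sigma|\sim N^{\beta}$, permits the longer time $t_N\sim N^{-\beta-\epsilon}$, and produces the lower bound $\|u_2(t_N)\|_{H^s}\gtrsim N\cdot t_N\cdot\omega^{1/2}=N^{(1-\beta)/2-\epsilon}$ --- it is this exponent, positive exactly when $\beta<1$, that proves the claim. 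Your description of the dissipative factors as "harmless" and the analysis as "exactly as for BO" obscures the fact that the band width must be shrunk to $N^{\beta-1}$ \emph{because of} the dissipation, and without that tuning the lower bound does not diverge.
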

The proof of Theorem \ref{illpos} is inspired by the results in \cite{vento2011} with several modifications dealing with the inclusion of the term $D_x^{\alpha}$. Theorem \ref{illpos} establishes that Theorem \ref{globalwlowdis} and \ref{globalw} are sharp in the sense that the flow map of the IVP \eqref{fDBO} fails to be $C^2$ in $H^s(\mathbb{R})$ for $s<-\beta/2$ and it fails to be $C^3$ in $H^s(\mathbb{R})$ when $s<\min\left\{3/2-\beta,-\beta/4\right\}$. In addition, Theorem \ref{illpos} (iii) determines that whenever $0<\alpha <\beta<1$, we cannot obtain solutions of \eqref{fDBO} via a contraction argument. Finally, we remark that at the end-point $\beta=1$, our proof of Theorem \ref{illpos} (iii) fails. However, Theorem \ref{illpos} (ii) provides ill-posedness in $H^s(\mathbb{R})$ for $s<-1/4$. As a consequence it is still not clear what happens to fDBO when $\beta=1$, $\alpha <\beta$ and $s\geq -1/4$. 
\\ \\
Another interesting property that can be determined by the structure of the \eqref{fDBO} equations is the behavior of solutions as the growth order $\alpha$ converges to the dissipation order $\beta$, i.e., when the effect of dissipation and growth cancel each other. In this respect we have:

\begin{prop}\label{contInsta}
Let $\beta>1$ and $u_0\in H^{s_0}(\mathbb{R})$.  If $s_0>3/2$, then there exist $T>0$ and a function $g\in C([0,T];[0,\infty))$, such that for all $0<\alpha \leq \beta$ there are solutions $u^{\alpha}\in C([0,T];H^{s_0}(\mathbb{R}))$ of \eqref{fDBO} with growth order $\alpha$, dissipation order $\beta$ and initial data $u_0$ such that
    \begin{equation}
        \left\|u^{\alpha}(t)\right\|_{H^{s_0}} \leq g(t), \hspace{0.5cm} t\in [0,T].
    \end{equation}
Moreover, if $s_0>3/2+\max\left\{\beta/2,1\right\}$, then for each $s<s_0-\max\left\{\beta/2,1\right\}$ the mapping
\begin{equation}\label{ContMap}
    \alpha \in (0,\beta] \longmapsto u^{\alpha} \in C([0,T];H^{s}(\mathbb{R}))
\end{equation}
is continuous.
\end{prop}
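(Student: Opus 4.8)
The plan is to prove both assertions by energy estimates performed at the high regularity level $s_0>3/2$, where the equation admits classical solutions for every $\alpha\in(0,\beta]$ (for $\alpha<\beta$ these are furnished by Theorems \ref{globalwlowdis}--\ref{globalw}, and for $\alpha=\beta$ by the classical local theory for the Benjamin-Ono equation). Writing $J^{s}=(1-\partial_x^2)^{s/2}$, I would first establish the uniform bound. Differentiating $\|u^{\alpha}\|_{H^{s_0}}^2$ in time and inserting the equation, the dispersive contribution $\langle J^{s_0}\mathcal{H}u^{\alpha}_{xx},J^{s_0}u^{\alpha}\rangle$ vanishes, since $\mathcal{H}\partial_x^2$ has the purely imaginary symbol $i\xi|\xi|$. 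The growth-dissipation term equals $\int(|\xi|^{\alpha}-|\xi|^{\beta})\langle\xi\rangle^{2s_0}|\widehat{u^{\alpha}}|^2\,d\xi$, whose high-frequency part is non-positive and whose low-frequency part is bounded by $\|u^{\alpha}\|_{L^2}^2$ uniformly in $\alpha$ (because $|\xi|^{\alpha}\le1$ for $|\xi|\le1$). The nonlinear term is controlled by $\|u^{\alpha}\|_{H^{s_0}}^3$ via the Kato-Ponce inequality, using $s_0>3/2$. This yields $\frac{d}{dt}\|u^{\alpha}\|_{H^{s_0}}^2\le C(\|u^{\alpha}\|_{H^{s_0}}^2+\|u^{\alpha}\|_{H^{s_0}}^3)$ with $C$ independent of $\alpha$; a comparison with the associated scalar ODE produces a time $T>0$ and a function $g$, both independent of $\alpha$, with $\|u^{\alpha}(t)\|_{H^{s_0}}\le g(t)$ on $[0,T]$.

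For the continuity statement, fix $\alpha_0\in(0,\beta]$ and set $w=u^{\alpha}-u^{\alpha_0}$, which solves, with $v=u^{\alpha}+u^{\alpha_0}$,
\[
w_t+\mathcal{H}w_{xx}+(D_x^{\beta}-D_x^{\alpha})w+\tfrac12\partial_x(vw)=(D_x^{\alpha}-D_x^{\alpha_0})u^{\alpha_0},\qquad w(0)=0,
\]
since the data coincide. I would run the $H^{s}$ energy estimate on $w$. The dispersive term again drops out; the nonlinear term is handled by the commutator/integration-by-parts argument, giving $|\langle J^{s}\partial_x(vw),J^{s}w\rangle|\lesssim \|v\|_{H^{s_0}}\|w\|_{H^{s}}^2\lesssim g(t)\|w\|_{H^{s}}^2$, which is where the requirement $s_0-s>1$ (the loss $1$ in $\max\{\beta/2,1\}$) enters. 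The dissipative form $\int(|\xi|^{\beta}-|\xi|^{\alpha})\langle\xi\rangle^{2s}|\widehat{w}|^2$ is non-negative on $|\xi|>1$ and is absorbed into a Gronwall term on $|\xi|\le1$.

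The heart of the matter is the forcing $F=(D_x^{\alpha}-D_x^{\alpha_0})u^{\alpha_0}$. On $|\xi|\le1$ its symbol is bounded and tends to $0$ pointwise as $\alpha\to\alpha_0$, so $\|\mathbf{1}_{|\xi|\le1}F\|_{H^s}\to0$ by dominated convergence with no derivative cost. On $|\xi|>1$, rather than estimating $F$ on its own (which would force the wasteful loss $s<s_0-\beta$), I would pair it against the square root of the dissipation: Cauchy-Schwarz gives
\[
\Big|\int_{|\xi|>1}(|\xi|^{\alpha}-|\xi|^{\alpha_0})\langle\xi\rangle^{2s}\widehat{u^{\alpha_0}}\,\overline{\widehat{w}}\Big|\le \mathcal{E}_{\alpha}^{1/2}\Big(\int_{|\xi|>1}(|\xi|^{\beta}-|\xi|^{\alpha})\langle\xi\rangle^{2s}|\widehat{w}|^2\Big)^{1/2},
\]
where $\mathcal{E}_{\alpha}=\int_{|\xi|>1}\frac{(|\xi|^{\alpha}-|\xi|^{\alpha_0})^2}{|\xi|^{\beta}-|\xi|^{\alpha}}\langle\xi\rangle^{2s}|\widehat{u^{\alpha_0}}|^2\,d\xi$. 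The second factor is exactly the dissipation and is absorbed, while a short computation shows the integrand of $\mathcal{E}_{\alpha}$ is dominated by $\langle\xi\rangle^{\beta+2s}|\widehat{u^{\alpha_0}}|^2$ (the worst case being the endpoint $\alpha_0=\beta$, for which the ratio collapses to $|\xi|^{\beta}-|\xi|^{\alpha}$) and tends to $0$ pointwise. Hence $\mathcal{E}_{\alpha}\to0$ by dominated convergence provided $u^{\alpha_0}\in H^{s+\beta/2}$, i.e. $s<s_0-\beta/2$, the uniform bound $\|u^{\alpha_0}\|_{H^{s_0}}\le g(t)$ supplying the dominating function. Collecting the estimates gives $\frac{d}{dt}\|w\|_{H^s}^2\le C g(t)\|w\|_{H^s}^2+o_{\alpha\to\alpha_0}(1)$ with $w(0)=0$, and Gronwall yields $\sup_{[0,T]}\|u^{\alpha}-u^{\alpha_0}\|_{H^s}\to0$.

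I expect the main obstacle to be precisely this forcing term at the endpoint $\alpha_0=\beta$, where the difference equation loses its dissipation ($|\xi|^{\beta}-|\xi|^{\alpha}\to0$) and the two naive options — bounding $F$ alone, or extracting smoothing from a coercive dissipative term — both fail. The device of balancing $F$ against the degenerate dissipative form, so that only the quantity $\mathcal{E}_{\alpha}$, which is linear in the symbol $|\xi|^{\beta}-|\xi|^{\alpha}$, must vanish, is what reduces the regularity loss from $\beta$ to $\beta/2$ and makes the estimate uniform across $(0,\beta]$, the endpoint included. A secondary technical point is the uniform-in-$\alpha$ bookkeeping in the commutator estimate, which fixes the competing loss $1$ and hence the final threshold $s<s_0-\max\{\beta/2,1\}$.
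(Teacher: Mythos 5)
Your proposal is correct and follows the paper's overall strategy: a uniform-in-$\alpha$ energy estimate at level $H^{s_0}$ with ODE comparison for the first claim, and an $H^{s}$ energy estimate on the difference $w$ with the forcing $(D_x^{\alpha}-D_x^{\alpha_0})u^{\alpha_0}$ as the crux for the second. The genuine difference lies in how that forcing is handled. The paper factors the symbol as $(|\xi|^{\alpha/2}-|\xi|^{\beta/2})(|\xi|^{\alpha/2}+|\xi|^{\beta/2})$, puts the second factor (bounded by $\langle\xi\rangle^{\beta/2}$) on $w$, and controls $\left\|w\right\|_{H^{s+\beta/2}}\leq 2M$ directly by the uniform a priori bound (using $s+\beta/2<s_0$); the forcing then enters the differential inequality \emph{linearly}, is integrated in time, and vanishes by dominated convergence in $\xi$ and then in $t$. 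You instead pair the forcing against the square root of the dissipative quadratic form $\int(|\xi|^{\beta}-|\xi|^{\alpha})\langle\xi\rangle^{2s}|\widehat{w}|^2$, absorbing half into the energy identity and leaving the quantity $\mathcal{E}_{\alpha}$. Both devices spend exactly $\beta/2$ derivatives on $u^{\alpha_0}$ and land on the same threshold $s<s_0-\beta/2$, so neither buys extra regularity; yours is self-contained at the endpoint $\alpha_0=\beta$ where the ratio degenerates gracefully, while the paper's is slightly simpler because it never needs the dissipation to be coercive. Two small points to tighten: for $\alpha_0<\beta$ the claimed pointwise domination of the integrand of $\mathcal{E}_{\alpha}$ by $\langle\xi\rangle^{\beta+2s}|\widehat{u^{\alpha_0}}|^2$ is \emph{not} uniform over all $\alpha\in(0,\beta)$ (the denominator $|\xi|^{\beta}-|\xi|^{\alpha}$ can be small while the numerator is not when $\alpha$ is far from $\alpha_0$ and close to $\beta$), so you must restrict $\alpha$ to a neighborhood of $\alpha_0$, with the dominating constant depending on $\beta-\alpha_0$; and you should note, as the paper does, that $u^{\beta}$ need only be assumed smooth enough to justify the computations by an approximation argument, since the BO flow itself is obtained by compactness. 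The paper proves continuity only at $\alpha_0=\beta$ in detail and remarks that interior points are similar; your unified treatment covers both at once.
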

 The first conclusion of Proposition \ref{contInsta} asserts that there exists a common time $T>0$ at which sufficiently regular solutions of \eqref{fDBO} are uniformly bounded independent of $0<\alpha\leq \beta$. In addition, Proposition \ref{contInsta} establishes some strong convergences between solutions of the growth-dissipation problem $u^{\alpha}$ to solutions of the dispersive Benjamin-Ono equation $u^{\beta}$.
 \\ \\
Concerning the periodic fDBO equations, our conclusions are summarized in the following theorem.
\begin{theorem}\label{periodiccase}
Assume that $\beta>3/2$ with $0<\alpha<\beta$ fixed. Then the conclusion of Theorem \ref{globalw} and Theorem \ref{illpos} part (i) and Proposition \ref{contInsta} still hold in the periodic case with $H^s(\mathbb{R})$ replaced by $H^s(\mathbb{T})$ and $Y_T^s$ replaced by $\tilde{Y}_T^s$. In particular the consequent GWP results established for the periodic fDBO equations are sharp whenever $\beta \geq 3$.  
\end{theorem}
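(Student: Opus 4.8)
The plan is to transport the three real-line arguments to the circle, the load-bearing one being the purely dissipative scheme of Dix behind Theorem \ref{globalw}; once the periodic linear framework is in place, the ill-posedness and the $\alpha\to\beta^{-}$ limit follow from the same mechanisms. The structural point that dictates the strategy is that on $\mathbb{T}$ the Benjamin--Ono dispersion carries no exploitable smoothing: the semigroup symbol $e^{t(-i\xi|\xi|+|\xi|^{\alpha}-|\xi|^{\beta})}$ is now sampled only at $\xi\in\mathbb{Z}$, so the Bourgain-space gain used in Theorem \ref{globalwlowdis} for $1<\beta<2$ is unavailable, and one must run the dissipative argument over the whole range $\beta>3/2$. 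This is workable because the dissipation is cleaner on the torus than on the line: for $|\xi|$ above a threshold $\xi_0=\xi_0(\alpha,\beta)$ one has $|\xi|^{\alpha}-|\xi|^{\beta}\le -\tfrac12|\xi|^{\beta}$, so every such high mode contracts like $e^{-t|\xi|^{\beta}/2}$, while the \emph{finitely many} remaining modes $|\xi|<\xi_0$ (where $\widetilde S$ need not decay, including the conserved zero mode) contribute only a bounded multiplicative factor on $[0,T]$.

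First I would define $\tilde Y_T^{s}$ by replacing the integral over $\xi\in\mathbb{R}$ in $Y_T^{s}$ with a sum over $\xi\in\mathbb{Z}$, and record the periodic linear smoothing estimates for $\widetilde S(t)$: boundedness from $H^{s}(\mathbb{T})$ into $\tilde Y_T^{s}$ and, for the Duhamel map $u\mapsto\int_0^{t}\widetilde S(t-t')\partial_x u(t')\,dt'$, a net smoothing that overcomes the single derivative from $\partial_x$ in the range $s>\max\{3/2-\beta,-\beta/2\}$. The heart of the matter is then the bilinear estimate for $\partial_x(uv)$ in $\tilde Y_T^{s}$: on $\mathbb{T}$ the frequency convolution becomes the discrete sum over $\xi_1+\xi_2=\xi$, but the derivation reduces to the same high-high and high-low case analysis as on $\mathbb{R}$, now with Cauchy--Schwarz over $\mathbb{Z}$. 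Since this argument uses only the dissipative weight $|\xi|^{\beta}$ and no resonance or dispersion counting, discreteness produces no loss and the same threshold reappears; the contraction in $\tilde Y_T^{s}$, the parabolic smoothing into $C((0,T];H^{\infty}(\mathbb{T}))$, and the smoothness of the flow then carry over unchanged.

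For Theorem \ref{illpos} part (i) I would reuse the real-line obstruction, expanding the putative $C^2$ flow to second order and testing against $u_0$ with $\widehat{u_0}=N^{-s}\mathds{1}_{\{\pm N\}}$; the second Picard iterate computed with $\widetilde S$ has $H^{s}(\mathbb{T})$ norm diverging as $N\to\infty$ precisely when $s<-\beta/2$, contradicting $C^2$ differentiability. For Proposition \ref{contInsta}, the high-regularity energy estimates ($s_0>3/2$, via $H^{s}(\mathbb{T})\hookrightarrow L^{\infty}(\mathbb{T})$ for $s>1/2$) together with the Gronwall/Bona--Smith continuity-in-$\alpha$ argument do not see the geometry of the domain and transfer verbatim, yielding both the uniform bound $\|u^{\alpha}(t)\|_{H^{s_0}}\le g(t)$ and the continuity of $\alpha\mapsto u^{\alpha}$. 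The hard part will be the periodic bilinear estimate --- confirming that the discrete dissipative gain genuinely compensates $\partial_x(uv)$ uniformly over all frequency interactions and that the non-decaying low modes are absorbed into the time-dependent constant without degrading the threshold --- together with the conceptual point that the dispersive gain is truly absent on $\mathbb{T}$, which is what forces $\beta>3/2$ (rather than $1<\beta<2$) as the correct periodic range and makes GWP sharp at $\beta\ge 3$, where $\max\{3/2-\beta,-\beta/2\}$ coincides with the ill-posedness threshold $-\beta/2$.
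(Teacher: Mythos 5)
Your plan for the well-posedness part and for the limit $\alpha\to\beta^{-}$ matches the paper's: on $\mathbb{T}$ one observes that $|k|^{\alpha}-|k|^{\beta}\leq 0$ for \emph{every} integer $k$ (so the semigroup is in fact contractive, with no growing low modes at all), and the kernel bounds of Propositions \ref{aprop1} and \ref{aprop1.1} transfer by comparing the $\ell^2(\mathbb{Z})$ sums with the corresponding integrals over $\mathbb{R}$; after that, Proposition \ref{aprop2} and the fixed point in $\tilde{Y}_T^s$ go through verbatim, and Proposition \ref{contInsta} only needs the periodic Kato--Ponce inequality and Weierstrass M-test in place of dominated convergence. (A small remark: the bilinear estimate in $Y_T^s$ is not a frequency-interaction case analysis with Cauchy--Schwarz; it is simply $\|\widehat{uv}\|_{\ell^{\infty}}\leq\|u\|_{L^2}\|v\|_{L^2}$ combined with the $L^2$ decay of the kernel $|\xi|\langle\xi\rangle^{s}e^{(|\xi|^{\alpha}-|\xi|^{\beta})t}$, so there is genuinely nothing frequency-localized to re-examine on $\mathbb{T}$.)

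There is, however, a genuine gap in your ill-posedness argument: the data $\widehat{u_0}=N^{-s}\mathds{1}_{\{\pm N\}}$ does not produce the threshold $s<-\beta/2$. With that choice the quadratic interaction $\widehat{u_0}\ast\widehat{u_0}$ is supported only at the frequencies $0$ and $\pm 2N$; the zero mode is annihilated by the derivative in $\partial_x(u_1^2)$, and at $\xi=\pm 2N$ the resonance satisfies $|\sigma(2N,N)|\sim\max\{N^2,N^{\beta}\}$ while the output is penalized by the Sobolev weight $\langle 2N\rangle^{s}$ with $s<0$. A direct computation with $t_N\sim N^{-\beta-\epsilon}$ (for $\beta\geq 2$) gives
\begin{equation*}
\left\|u_2(t_N)\right\|_{H^s}\gtrsim \langle 2N\rangle^{s}\, N\, N^{-2s}\, N^{-\beta-\epsilon}\sim N^{-s+1-\beta-\epsilon},
\end{equation*}
which diverges only for $s<1-\beta$, strictly below $-\beta/2$ when $\beta>2$; in particular your construction misses exactly the range $1-\beta\leq s<-\beta/2$ needed for the sharpness claim at $\beta\geq 3$. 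The point is that on the line the two frequency packets near $\pm N$ interact into a small neighborhood of $\xi=0$, where both $|\xi|$ and $\langle\xi\rangle^{s}$ are harmless, but on $\mathbb{Z}$ the only frequency near zero is $0$ itself, which $\partial_x$ kills. The paper's fix is to place the data at the frequencies $N$ and $1-N$, so that the interaction outputs at the frequency $\xi=1$, where the derivative factor and the weight $\langle 1\rangle^{s}$ are both $O(1)$ and $|\sigma(1,N)|\sim N^{\beta}$; this yields $\|u_2(t_N)\|_{H^s}\gtrsim N^{-2s-\beta-\epsilon}$ and hence the sharp condition $s<-\beta/2$.
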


\begin{rem}
Theorem \ref{periodiccase} establishes GWP for the IVP \eqref{fDBO} in $H^s(\mathbb{T})$, with $s>\max\{3/2-\beta , -\beta/2 \}$ if $\beta>3/2$, and for $\beta\geq 1$ it shows that the flow map $u_0\mapsto u$ fails to be $C^2$ in $H^s(\mathbb{T})$ for $s<-\beta /2$. This implies that the GWP result is sharp for $\beta \geq 3$. It is not clear what happens to the IVP associated to the periodic fDBO equations for either $3/2<\beta < 3$ and $-\beta/2\leq s \leq 3/2-\beta$ or for $0<\beta \leq 3/2$ and $s\geq -\beta/2$. 
\end{rem}

Let us now discuss some consequences of our results restricted to the particular cases \eqref{cl}, \eqref{npBO} and \eqref{dBO4}. Here we reprove the conclusions in \cite{P,PR,PR1} for the IVP associated to the CL equation \eqref{cl}. Thus, for this problem, we obtain GWP in $H^s(\mathbb{R})$ and in $H^s(\mathbb{T})$ when $s> -1/2$, we also show that the flow map data-solution for \eqref{cl} fails to be $C^3$ at the origin of $H^s(\mathbb{R})$ when $s<-1/2$, and it lacks $C^2$ regularity at the origin of $H^s(\mathbb{T})$ if $s<-1$. 

Regarding the IVP for the npBO equation \eqref{npBO}, we obtain the same GWP in $H^s(\mathbb{R})$ established in \cite{FRG} and we deduce new global results in periodic Sobolev spaces. More specifically, we deduce GWP in $H^s(\mathbb{R})$ and $H^s(\mathbb{T})$ for $s>-3/2$ and sharp results in the sense that the flow map $u_0\mapsto u$ for npBO fails to be $C^2$ at zero from $H^s(\mathbb{R})$ to $H^s(\mathbb{R})$ or from $H^s(\mathbb{T})$ to $H^s(\mathbb{T})$ when $s<-3/2$.

Finally, in the case of the IVP associated to \eqref{dBO4} which had not been studied before, we deduce GWP in $H^s(\mathbb{R})$ and in $H^s(\mathbb{T})$ when $s>-2$, and we find that its flow map data-solution lacks of $C^2$ regularity at the origin of $H^s(\mathbb{R})$ and $H^s(\mathbb{T})$ when $s<-2$. In this manner, our conclusions for the equations \eqref{npBO} and \eqref{dBO4}  are sharp in both contexts real and periodic. 
\\ 
The organization of the paper is as follows. We begin by introducing some notation and functional spaces to be employed in our arguments. 
In section 2, under the assumptions that $1<\beta<2$ with $0<\alpha<\beta$, we  deduce the crucial bilinear estimates on the $X^{b,s}$ spaces, which ultimately leads to the conclusion of Theorem \ref{globalwlowdis}. In the following section we set $\beta\geq 2$ and $0<\alpha<\beta$ to prove Theorem \ref{globalw}. 
 In the fourth section we show the ill-posedness results stated in Theorem \ref{illpos}. The fifth section is aimed to deduce Proposition \ref{contInsta}. We conclude the paper studying the periodic fDBO equations, that is, we prove Theorem \ref{periodiccase}.

\subsection{Notation and Preliminaries}

The notation we will employ is quite standard. $A\lesssim B$ (for $A$ and $B$ nonnegative) means that there exists $C>0$ independent of $A$ and $B$ such that $A\leq CB$. Similarly define $A\gtrsim B$ and $A\sim B$. Given $p\in [1,\infty]$, we define its conjugate $p'\in [1,\infty]$ from the relation $1=\frac{1}{p}+\frac{1}{p'}$. For such values of $p$, we define the Lebesgue spaces $L^p(\mathbb{R})$ by its norm by $$\left\|f\right\|_{L^p}=\left(\int_{\mathbb{R}} |f(x)|^p\,dx\right)^{1/p},$$
with the usual modification when $p=\infty$. We also consider space-time Lebesgue spaces 
$$\left\|f\right\|_{L^q_t L^p_x}=\left\|\left\|f(\cdot,t)\right\|_{L_x^p}\right\|_{L^{q}_t} \text{ and } \left\|f\right\|_{L^q_T L^p_x}=\left\|\left\|f(\cdot,t)\right\|_{L_x^p}\right\|_{L^{q}_t([0,T])}.$$

The usual Fourier transform is given by $$\widehat{f}(\xi)=\mathcal{F}f(\xi)=\frac{1}{\sqrt{2\pi}}\int_{\mathbb{R}} e^{-i x\cdot \xi} f(x) \, dx.$$
The factor $1/\sqrt{2\pi}$ in the definition of the Fourier transform does not alter our analysis, so will omit it. 

Given $s\in \mathbb{R}$, the $L^2$-based Sobolev spaces $H^s(\mathbb{R})$ are defined by
$$H^s(\mathbb{R})=\left\{f\in S'(\mathbb{R}):\, \left\|f\right\|_{H^s}<\infty\right\},$$
where
$$\left\|f\right\|_{H^s}=\left\|\langle \xi \rangle^{s}\widehat{f}(\xi) \right\|_{L^2}=\left\|J^s u\right\|_{L^2},$$
with $\langle \cdot \rangle=(1+|\cdot|^2)^{1/2}$ and $J^s$ defined by the Fourier symbol $\langle \xi \rangle^s$.

Similarly we define $\dot{H}^s(\mathbb{R})$ by its norm $\left\|f\right\|_{\dot{H}^s}=\left\||\xi|^s\widehat{f}(\xi) \right\|_{L^2}$. 
Recall that for $\lambda>0$,
\begin{equation}\label{scaleineq}
\left\|f(\lambda \cdot)\right\|_{H^s} \leq (\lambda^{-1/2}+\lambda^{s-1/2}) \left\|f\right\|_{H^s} \,  \text{ and }   \left\|f(\lambda \cdot)\right\|_{\dot{H}^s}\sim \lambda^{s-1/2}\left\|f\right\|_{\dot{H}^s}.
\end{equation}


We also consider space-time spaces $H^{b,s}(\mathbb{R}^2)$ endowed with the norm 
    $$\left\|f\right\|_{H^{b,s}}=\left\|\langle\tau \rangle^b \langle\xi \rangle^s \widehat{f}(\xi,\tau) \right\|_{L^2_{\tau}L^2_{\xi}}.$$
Let $U(\cdot)$ be the unitary group in $H^s(\mathbb{R})$, $s\in \mathbb{R}$ associated to the linear Benjamin-Ono equation, {\it i.e.},
\begin{equation*}
    \mathcal{F}_x(U(t)\varphi)(\xi)=e^{-it \xi|\xi|}\widehat{\varphi}(\xi), 
\end{equation*}
for $t\in \mathbb{R}$ and $\varphi \in H^s(\mathbb{R})$.

We denote by $S(\cdot)$ the $H^s(\mathbb{R})$ semigroup generated by the operator $\mathcal{H}\partial_{xx} -(D_x^{\alpha}-D_x^{\beta})$, which is equivalently defined via the Fourier transform by
$$\mathcal{F}_x(S(t)\varphi)(\xi)=e^{-i|\xi|\xi t+(|\xi|^{\alpha}-|\xi|^{\beta})t}\widehat{\varphi}(\xi), \hspace{0.5cm} t\geq 0.$$
We extend $S(\cdot)$ to a linear operator on the whole real axis by setting  
$$\mathcal{F}_x(S(t)\varphi)(\xi)=e^{-i|\xi|\xi t+(|\xi|^{\alpha}-|\xi|^{\beta})|t|}\widehat{\varphi}(\xi), \hspace{0.5cm} t\in \mathbb{R}.$$
When the dissipation order satisfy $1<\beta<2$ with order growth $0<\alpha<\beta$, we introduce the function space $X^{b,s}$ in the sense of Bourgain \cite{Bourgain} and Molinet and Ribaud \cite{moli-Rib} to be the completion of the Schwartz space $S(\mathbb{R}^2)$ on $\mathbb{R}^2$ endowed with the norm
\begin{equation}\label{BourgSpac}
    \left\|u\right\|_{X^{b,s}}=\left\|\langle i(\tau+\xi|\xi|) -(|\xi|^{\alpha}-|\xi|^{\beta})\rangle^b\langle \xi \rangle^s \widehat{u}(\xi,\tau)\right\|_{L^2(\mathbb{R}^2)},
\end{equation}
or equivalently,
\begin{equation*}
    \left\|u\right\|_{X^{b,s}}=\left\|\langle |\tau+\xi|\xi|| +||\xi|^{\alpha}-|\xi|^{\beta}|\rangle^b\langle \xi \rangle^s \widehat{u}(\xi,\tau) \right\|_{L^2(\mathbb{R}^2)}.
\end{equation*}
For $T>0$, we consider the localized spaces $X_T^{b,s}$ endowed with the norm
\begin{equation}
    \left\|u\right\|_{X^{b,s}_T}=\inf\left\{\left\|w\right\|_{X^{b,s}}\, : \, w(t)=u(t) \text{ on } [0,T]\right\}.
\end{equation}
Next, we consider the restrictions $\beta \geq 2$ with $0<\alpha <\beta$. In this case, we can rely on pure dissipative methods to deduce well-posedness results. Thus, given $s\in \mathbb{R}$ and $0<t\leq T\leq 1$ fixed, we denote by
$$Y^s_T=\left\{u\in C([0,T];H^s(\mathbb{R})): \, \left\|u\right\|_{Y_T^s}<\infty \right\},$$
where
\begin{equation} \label{aeq5}
    \left\|u\right\|_{Y_T^s}:= \sup_{t\in(0,T]} \left(\left\|u(t)\right\|_{H^s}+t^{|s|/\beta}\left\|u(t)\right\|_{L^2}\right).
\end{equation}
Note that when $s\geq 0$, $Y_T^s=C([0,T];H^s(\mathbb{R}))$ and $\left\|u\right\|_{Y_T^s}\sim \left\|u\right\|_{L^{\infty}_T H^s}$.

We mainly work on the integral formulation of \eqref{fDBO} denoted by
\begin{equation}\label{inteq}
    u(t)=S(t) u_0-\frac{1}{2}\int_{0}^{t} S(t-\tau) \partial_x u^2(\tau)\, d\tau, \hspace{0.2cm} t\geq 0,
\end{equation}
valid for any sufficiently regular solution. When the dissipation $\beta <2$ it will be convenient to replace the local-in-time integration \eqref{inteq} with a global-in-time truncated equation. Let $\psi$ be a cutoff function  such that
\begin{equation*}
    \psi \in C_0^{\infty}(\mathbb{R}), \hspace{0.2cm}  \supp(\psi) \subset [-2,2], \hspace{0.2cm} \psi \equiv 1 \, \text{ on } [-1,1],
\end{equation*}
and set $\psi_T(\cdot)=\psi(\cdot/T)$ for all $T>0$. Thus we can replace \eqref{inteq} on time interval $[0,T]$, $T<1$ by the equation
\begin{equation}\label{inteqglob}
    u(t)=\psi(t)\left[S(t) u_0-\frac{\chi_{\mathbb{R}^{+}}(t)}{2}\int_{0}^{t} S(t-\tau) \partial_x(\psi_T^2(\tau)u^2(\tau))\, d\tau\right], \hspace{0.2cm} t\geq 0.
\end{equation}


\section{Well-Posedness case \texorpdfstring{$1<\beta< 2$}{}.}

In this section we establish Theorem \ref{globalwlowdis} when the dissipative order lies in $1<\beta<2$ and the growth order $0<\alpha<\beta$. In this case, we see that the dispersive part of \eqref{fDBO} has an important role to obtain low regularity solutions. Consequently, we will apply a contraction argument on the integral equation \eqref{inteq} on the $X_T^{b,s}$ spaces, which consider these effects. The main ingredient to apply this technique is the derivation of a key bilinear estimate (see Proposition \ref{bilineaest} below).

\subsection{Linear estimates}

In this part we estimate the operator $\psi(\cdot)S(\cdot)$ as well as the linear operator $L$ defined by
\begin{equation*}
    L: \, f \mapsto \, \chi_{\mathbb{R}^{+}}(t) \psi(t) \int_0^t S(t-\tau)f(\tau)\, d\tau .
\end{equation*}
We first study the  action of the semigroup $\left\{S(t)\right\}_{t\geq 0}$ on $H^s(\mathbb{R})$, $s\in \mathbb{R}$. 
\begin{prop}\label{aprop1} 
Let $0<\alpha < \beta $ and
 \begin{equation}\label{expsg}
    \psi_{\alpha,\beta}(t)=\exp\left(\left(\frac{2\alpha}{\beta}\right)^{\frac{\alpha}{\beta-\alpha}}\frac{(\beta-\alpha)}{\beta} t\right), \qquad t\in \mathbb{R}.
\end{equation}
Consider $\delta \geq0$ and $s\in \mathbb{R}$. Then for all $t>0$ it follows
\begin{equation}\label{aeq14}
        \left\|S(t) \phi\right\|_{H^{s+\delta}} \lesssim \psi_{\alpha,\beta}(t) \,\left(1+t^{-\delta/\beta}\right)\left\|\phi\right\|_{H^s},
\end{equation}
where $\phi\in H^s(\mathbb{R})$ and the implicit constant depends on $\delta$ and $\beta$. Moreover, the map $t\mapsto S(t) \phi$ belongs to $C((0,\infty);H^{s+\delta}(\mathbb{R}))$.

\end{prop}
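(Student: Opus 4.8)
The plan is to reduce \eqref{aeq14} to a single pointwise bound on the Fourier multiplier of $S(t)$ and then integrate. Since the dispersive factor $e^{-i|\xi|\xi t}$ has modulus one, we have $|\mathcal{F}_x(S(t)\phi)(\xi)| = e^{(|\xi|^{\alpha}-|\xi|^{\beta})t}\,|\widehat{\phi}(\xi)|$, so Plancherel gives
$$\|S(t)\phi\|_{H^{s+\delta}}^2 = \int_{\mathbb{R}}\langle\xi\rangle^{2s}\Big(\langle\xi\rangle^{\delta}e^{(|\xi|^{\alpha}-|\xi|^{\beta})t}\Big)^{2}|\widehat{\phi}(\xi)|^2\,d\xi.$$
Hence it suffices to prove the uniform multiplier estimate
$$\langle\xi\rangle^{\delta}e^{(|\xi|^{\alpha}-|\xi|^{\beta})t}\lesssim \psi_{\alpha,\beta}(t)\,\bigl(1+t^{-\delta/\beta}\bigr),\qquad \xi\in\mathbb{R},\ t>0,$$
with implicit constant depending only on $\delta$ and $\beta$; squaring this bound and pulling it out of the integral then yields \eqref{aeq14} at once.

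The heart of the matter, and the place where the growth operator $D_x^{\alpha}$ forces care, is that $|\xi|^{\alpha}-|\xi|^{\beta}$ is positive for $|\xi|\le1$, so $S(t)$ is not purely smoothing and one must extract the exact exponential growth rate encoded in $\psi_{\alpha,\beta}$. To do this I would split the dissipation in half and factor
$$\langle\xi\rangle^{\delta}e^{(|\xi|^{\alpha}-|\xi|^{\beta})t} = \underbrace{e^{(|\xi|^{\alpha}-\frac12|\xi|^{\beta})t}}_{\text{growth}}\cdot\underbrace{\langle\xi\rangle^{\delta}e^{-\frac12|\xi|^{\beta}t}}_{\text{smoothing}}.$$
For the growth factor, a one–variable calculus computation shows that $x\mapsto x^{\alpha}-\tfrac12 x^{\beta}$ is maximized at $x_{\ast}=(2\alpha/\beta)^{1/(\beta-\alpha)}$ with maximal value $(2\alpha/\beta)^{\alpha/(\beta-\alpha)}\tfrac{\beta-\alpha}{\beta}$, which is exactly the exponent defining $\psi_{\alpha,\beta}$ in \eqref{expsg}; thus the growth factor is bounded by $\psi_{\alpha,\beta}(t)$, and this is precisely what dictates the constant $2\alpha/\beta$ appearing there. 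For the smoothing factor I would use $\langle\xi\rangle^{\delta}\lesssim 1+|\xi|^{\delta}$ together with the elementary bound $\sup_{y\ge0}y^{\delta/\beta}e^{-y/2}=:C_{\delta,\beta}<\infty$; substituting $y=|\xi|^{\beta}t$ gives $|\xi|^{\delta}e^{-\frac12|\xi|^{\beta}t}\le C_{\delta,\beta}\,t^{-\delta/\beta}$, and combining with $e^{-\frac12|\xi|^\beta t}\le 1$ yields $\langle\xi\rangle^{\delta}e^{-\frac12|\xi|^{\beta}t}\lesssim 1+t^{-\delta/\beta}$. Multiplying the two bounds gives the desired multiplier estimate.

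Finally, the continuity of $t\mapsto S(t)\phi$ in $H^{s+\delta}(\mathbb{R})$ on $(0,\infty)$ would follow from dominated convergence. Fixing $t_0>0$ and restricting $t$ to $[t_0/2,2t_0]$, for each $\xi$ the map $t\mapsto e^{-i|\xi|\xi t+(|\xi|^{\alpha}-|\xi|^{\beta})t}$ is continuous, while a uniform integrable dominant exists: on $|\xi|\le1$ the factor $\langle\xi\rangle^{2(s+\delta)}e^{2(|\xi|^{\alpha}-|\xi|^{\beta})t}$ is bounded by $2^{s+\delta}e^{4t_0}$, and on $|\xi|>1$ the exponent is negative so that $t\ge t_0/2$ forces $\langle\xi\rangle^{2(s+\delta)}e^{2(|\xi|^{\alpha}-|\xi|^{\beta})t}\le \langle\xi\rangle^{2(s+\delta)}e^{-\frac12|\xi|^{\beta}t_0}$, which is bounded uniformly in $\xi$. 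Thus a constant multiple of $\langle\xi\rangle^{2(s+\delta)}|\widehat{\phi}(\xi)|^2\in L^1$ dominates the integrand, and letting $t\to t_0$ sends it to $0$. The main obstacle is entirely the first step, namely isolating the sharp growth rate $\psi_{\alpha,\beta}$ against the non-dissipative low-frequency contribution of $D_x^{\alpha}$; once the half–dissipation split is in place, everything reduces to elementary one–variable estimates and dominated convergence.
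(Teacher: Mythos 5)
Your proof of the estimate \eqref{aeq14} is correct and follows essentially the same route as the paper: both arguments factor off half the dissipation, identify $\psi_{\alpha,\beta}(t)$ as $\sup_{\xi}e^{(|\xi|^{\alpha}-|\xi|^{\beta}/2)t}$ via the one-variable maximization of $x^{\alpha}-\tfrac12 x^{\beta}$, and bound the remaining factor $\langle\xi\rangle^{\delta}e^{-|\xi|^{\beta}t/2}$ by a constant times $1+t^{-\delta/\beta}$ (the paper does this by substituting $w=t^{1/\beta}\xi$, you by $\sup_{y\ge0}y^{\delta/\beta}e^{-y/2}$ --- the same computation). For the continuity, the paper merely cites an external reference, while you give a self-contained dominated convergence argument; there is one small slip there: the dominating function should be $C(t_0)\,\langle\xi\rangle^{2s}|\widehat{\phi}(\xi)|^{2}$, which lies in $L^1$ because $\phi\in H^{s}$, not $\langle\xi\rangle^{2(s+\delta)}|\widehat{\phi}(\xi)|^{2}$, which need not be integrable when $\delta>0$; the uniform bound you already proved for $\langle\xi\rangle^{2\delta}e^{2(|\xi|^{\alpha}-|\xi|^{\beta})t}$ over $t\in[t_0/2,2t_0]$ supplies exactly the needed constant, so the fix is immediate.
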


\begin{proof}
Noting that for all $0<\alpha < \beta$ and $t>0$,
\begin{equation*}
    \left\|e^{(|\xi|^{\alpha}-|\xi|^{\beta}/2)t}\right\|_{L^{\infty}}=\exp\left(\left(\frac{2\alpha}{\beta}\right)^{\frac{\alpha}{\beta-\alpha}}\frac{(\beta-\alpha)}{\beta} t\right)=\psi_{\alpha,\beta}(t),
\end{equation*}
we derive the bound
\begin{equation}\label{semigb}
    |e^{(|\xi|^{\alpha}-|\xi|^{\beta})t}| \leq \psi_{\alpha,\beta}(t)e^{-|\xi|^{\beta}t/2}.
\end{equation}
From this and setting $w=t^{1/\beta}\xi$, we see that
\begin{equation}\label{aeq15}
    \begin{aligned}
     \left\|\langle \xi\rangle^{\delta} e^{(|\xi|^{\alpha}-|\xi|^{\beta})t}\right\|_{L^{\infty}}\leq \psi_{\alpha,\beta}(t)\left\|\langle t^{-1/\beta}w\rangle^{\delta}e^{-|w|^{\beta} /2}\right\|_{L^{\infty}}.
    \end{aligned}
\end{equation}
Since $$(1+t^{-2/\beta}|w|^2)^{\delta/2}\lesssim 1+t^{-\delta/\beta}|w|^\delta,$$
we find
\begin{equation*}
    \begin{aligned}
     \left\|S(t)\phi\right\|_{H^{s+\delta}}&\leq \left\|\langle \xi \rangle^{\delta}  e^{(|\xi|^{\alpha}-|\xi|^{\beta})t}\right\|_{L^{\infty}}\left\|\phi\right\|_{H^{s}}\\
     & \lesssim \psi_{\alpha,\beta}(t) \,\left(1+t^{-\delta/\beta}\right)\left\|\phi\right\|_{H^s}.
    \end{aligned}
    \end{equation*}
This establish \eqref{aeq14}. The continuity of the map $t\mapsto S(t)\phi$ is deduced arguing as in \cite[Proposition 2.2]{biag}. 
\end{proof}

\begin{lemma}\label{BLILEMMA1}
For all $s\in \mathbb{R}$ and all $\varphi\in H^s(\mathbb{R})$
\begin{equation}
    \left\|\psi(t)S(t)\varphi \right\|_{X^{1/2,s}} \lesssim \left\|\varphi \right\|_{H^s}.
\end{equation}
\end{lemma}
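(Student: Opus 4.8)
The plan is to reduce the estimate, by Plancherel in the spatial variable, to a one–dimensional statement in time that is uniform over the range of the dissipative symbol. Writing $a(\xi):=|\xi|^{\alpha}-|\xi|^{\beta}$ and taking the space–time Fourier transform of $u=\psi(t)S(t)\varphi$, one computes
\begin{equation*}
\widehat{u}(\xi,\tau)=\widehat{\varphi}(\xi)\,h_{a(\xi)}\big(\tau+\xi|\xi|\big),\qquad h_a(\mu):=\int_{\mathbb{R}}e^{-i\mu t}\,\psi(t)\,e^{a|t|}\,dt=\mathcal{F}_t\big[\psi\,e^{a|\cdot|}\big](\mu).
\end{equation*}
For each fixed $\xi$ I substitute $\mu=\tau+\xi|\xi|$ in the $\tau$–integral and use the equivalent form of the weight in the definition \eqref{BourgSpac} of the $X^{1/2,s}$ norm, so that
\begin{equation*}
\left\|\psi(t)S(t)\varphi\right\|_{X^{1/2,s}}^{2}=\int_{\mathbb{R}}\langle\xi\rangle^{2s}|\widehat{\varphi}(\xi)|^{2}\left(\int_{\mathbb{R}}\langle|\mu|+|a(\xi)|\rangle\,|h_{a(\xi)}(\mu)|^{2}\,d\mu\right)d\xi.
\end{equation*}
Thus it suffices to bound the inner integral by a constant independent of $\xi$. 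Since $0<\alpha<\beta$, Proposition \ref{aprop1} gives $a(\xi)\le C_{\alpha,\beta}:=\max_{\eta}(|\eta|^{\alpha}-|\eta|^{\beta})<\infty$, while $a(\xi)\to-\infty$ as $|\xi|\to\infty$; hence the task becomes proving $\sup_{a\le C_{\alpha,\beta}}\int_{\mathbb{R}}\langle|\mu|+|a|\rangle\,|h_a(\mu)|^{2}\,d\mu\lesssim 1$.

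Second, I would split the weight as $\langle|\mu|+|a|\rangle\lesssim\langle\mu\rangle+|a|$, producing two contributions. The term carrying the factor $|a|$ is handled by Plancherel in $t$: it equals $|a|\,\|\psi\,e^{a|\cdot|}\|_{L^2_t}^{2}$, and since $\supp(\psi)\subset[-2,2]$ one has $\|\psi\,e^{a|\cdot|}\|_{L^2_t}^{2}\le\int_{-2}^{2}e^{2a|t|}\,dt\lesssim\langle a\rangle^{-1}$ for every $a\le C_{\alpha,\beta}$ (the decay being the dissipative gain, coming from $a\le 0$ at high frequency). Consequently $|a|\,\|\psi\,e^{a|\cdot|}\|_{L^2_t}^{2}\lesssim 1$ uniformly. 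This is the step where the dissipation is actually exploited.

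Third, for the remaining contribution $\int\langle\mu\rangle|h_a(\mu)|^{2}\,d\mu$, which is comparable to $\|\psi\,e^{a|\cdot|}\|_{H^{1/2}_t}^{2}$, I would use the Cauchy--Schwarz bound $\int|\mu|\,|\widehat{f}(\mu)|^{2}\,d\mu\le\|f\|_{L^2}\|f'\|_{L^2}$ together with $\langle\mu\rangle-1\le|\mu|$, so that, writing $f_a=\psi\,e^{a|\cdot|}$, one gets $\int\langle\mu\rangle|h_a|^{2}\lesssim\|f_a\|_{L^2}^{2}+\|f_a\|_{L^2}\|f_a'\|_{L^2}$. Since $f_a$ is Lipschitz with $f_a'=\psi'e^{a|\cdot|}+a\,\text{sgn}(t)\,\psi\,e^{a|\cdot|}$ away from the null set $\{t=0\}$, the previous step yields $\|f_a'\|_{L^2}\lesssim\langle a\rangle\,\|\psi\,e^{a|\cdot|}\|_{L^2}\lesssim\langle a\rangle^{1/2}$, while $\|f_a\|_{L^2}\lesssim\langle a\rangle^{-1/2}$. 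Hence both $\|f_a\|_{L^2}^{2}$ and $\|f_a\|_{L^2}\|f_a'\|_{L^2}$ are $\lesssim 1$ uniformly in $a\le C_{\alpha,\beta}$. Combining the three steps bounds the inner integral by a constant and establishes the lemma.

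The main obstacle is exactly this third step: controlling the time $H^{1/2}$ norm of $\psi\,e^{a|\cdot|}$ uniformly as $a\to-\infty$, since the profile concentrates into a sharp spike at $t=0$ (where $|t|$ is merely Lipschitz) even as its $L^2$ mass vanishes. The resolution is the exact balance between the dissipative $L^2$ gain $\langle a\rangle^{-1/2}$ and the derivative loss $\langle a\rangle^{1/2}$; a scaling heuristic ($\dot H^{1/2}$ is scale invariant in one dimension, and $e^{a|\cdot|}$ is a dilation of the fixed profile $e^{-|\cdot|}$) already suggests that the $\dot H^{1/2}$ norm stays bounded, but the interpolation inequality gives the cleanest rigorous route and avoids estimating the Gagliardo seminorm of the truncated profile directly.
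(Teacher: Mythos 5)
Your argument is correct, and it follows the same overall reduction as the paper: Plancherel in $x$ plus the change of variable $\mu=\tau+\xi|\xi|$ reduce the lemma to a uniform-in-$\xi$ bound on $\left\|\psi\,e^{a(\xi)|\cdot|}\right\|_{H^{1/2}_t}$ together with $\langle a(\xi)\rangle^{1/2}\left\|\psi\,e^{a(\xi)|\cdot|}\right\|_{L^2_t}$, which is exactly the quantity $\left\|g_\xi\right\|_{H^b_t}$, $b\in\{0,1/2\}$, that the paper estimates. Where you genuinely diverge is in how that one-dimensional estimate is proved. The paper splits into the regimes $|\xi|\ge 2^{1/(\beta-\alpha)}$ (where $|a(\xi)|\sim|\xi|^\beta\gtrsim 1$), treating $g_\xi$ as the convolution $\widehat{\psi}\ast\mathcal{F}_t(e^{a|t|})$, applying Young's inequality and the scaling relation \eqref{scaleineq} to the dilated profile $e^{-|\cdot|}$, and $|\xi|<2^{1/(\beta-\alpha)}$, where it expands $e^{a|t|}$ in a Taylor series and sums $\left\||t|^n\psi\right\|_{H^1}\lesssim n$. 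You instead give a single unified argument valid for all $a\le C_{\alpha,\beta}$: the Cauchy--Schwarz interpolation $\int|\mu|\,|\widehat{f}|^2\,d\mu\le\left\|f\right\|_{L^2}\left\|f'\right\|_{L^2}$ applied to the Lipschitz, compactly supported product $f_a=\psi\,e^{a|\cdot|}$, together with the elementary bounds $\left\|f_a\right\|_{L^2}\lesssim\langle a\rangle^{-1/2}$ and $\left\|f_a'\right\|_{L^2}\lesssim\langle a\rangle^{1/2}$. This buys you a shorter proof with no case split and no need to compute the fractional Sobolev norm of the non-smooth profile $e^{a|t|}$ on the whole line; what the paper's route buys is closer alignment with the standard Molinet--Ribaud/Vento template (the convolution-plus-scaling step is reused verbatim in Proposition \ref{BLIPROP2}). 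Both hinge on the same cancellation $\langle a\rangle^{-1/2}\cdot\langle a\rangle^{1/2}=1$. One cosmetic remark: the intermediate inequality $\left\|f_a'\right\|_{L^2}\lesssim\langle a\rangle\left\|\psi e^{a|\cdot|}\right\|_{L^2}$ silently uses that $\left\|\psi'e^{a|\cdot|}\right\|_{L^2}$ is controlled by $\left\|\psi e^{a|\cdot|}\right\|_{L^2}$; this is true (both are comparable to $\langle a\rangle^{-1/2}$ by the same support computation), but it is cleaner to bound the two terms of $f_a'$ separately.
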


\begin{proof}
By definition of the $\left\|\cdot\right\|_{X^{1/2,s}}$-norm, we find the following upper-bound
\begin{equation}\label{BLE1}
    \begin{aligned}
     \left\|\psi(t)S(t)\varphi \right\|_{X^{1/2,s}} &\lesssim \left\|\langle \xi \rangle^s  \widehat{\varphi}(\xi) \left\|\langle \tau \rangle^{1/2} \mathcal{F}_t(g_{\xi}(t)) \right\|_{L^2_\tau(\mathbb{R})} \right\|_{L^2_{\xi}(\mathbb{R})} \\
     &\hspace{0.3cm}+ \left\|\langle \xi \rangle^s \langle |\xi|^{\alpha}-|\xi|^{\beta} \rangle^{1/2}  \widehat{\varphi}(\xi) \left\|g_{\xi}(t) \right\|_{L^2_t(\mathbb{R})} \right\|_{L^2_{\xi}(\mathbb{R})}.
    \end{aligned}
\end{equation}
where we have set $g_{\xi}(t):=\psi(t)e^{(|\xi|^{\alpha}-|\xi|^{\beta})|t|}$. Therefore, in view of \eqref{BLE1}, it is enough to estimate  $\left\|g_{\xi}\right\|_{H^{b}_t}$ for $b\in \left\{0,1/2\right\}$. First assume that $|\xi|\geq 2^{\frac{1}{\beta-\alpha}}$, then
\begin{equation}
    \begin{aligned}
     \left\|g_{\xi}\right\|_{H^{b}_t} &= \left\|\langle \tau \rangle^{b}\, \widehat{\psi}\ast\mathcal{F}_t(e^{(|\xi|^{\alpha}-|\xi|^{\beta})|t|})\right\|_{L^2} \\
     &\lesssim \left\|\langle \tau\rangle^{b}\widehat{\psi}(\tau)\right\|_{L^1_{\tau}}\left\|e^{(|\xi|^{\alpha}-|\xi|^{\beta})|t|}\right\|_{L^2_t}+ \left\|\widehat{\psi}(\tau)\right\|_{L^1_{\tau}}\left\|e^{(|\xi|^{\alpha}-|\xi|^{\beta})|t|}\right\|_{H^{1/2}_t},
    \end{aligned}
\end{equation}
so that \eqref{scaleineq} yields to
\begin{equation}\label{BLE2}
    \left\|g_{\xi}\right\|_{H^{b}_t}\lesssim (||\xi|^{\alpha}-|\xi|^{\beta}|^{b-1/2}+||\xi|^{\alpha}-|\xi|^{\beta}|^{-1/2}).
\end{equation}
Note that $||\xi|^{\alpha}-|\xi|^{\beta}|\sim |\xi|^{\beta}$ when $|\xi|\geq 2^{\frac{1}{\beta-\alpha}}$, and so $\left\|g_{\xi}\right\|_{H^{1/2}_t} \lesssim 1$ in this case. On the other hand, when $|\xi|< 2^{\frac{1}{\beta-\alpha}}$,
\begin{equation*}
    \begin{aligned}
     \left\|g_{\xi}\right\|_{H^{b}_t}=\left\|\psi(t)e^{(|\xi|^{\alpha}-|\xi|^{\beta})|t|}\right\|_{H^b_t} \lesssim \sum_{n=0}^{\infty} \frac{||\xi|^{\alpha}-|\xi|^{\beta}|^{n}}{n!}\left\||t|^n \psi(t)\right\|_{H^b}.
    \end{aligned}
\end{equation*}
Since $n\geq 1$, $\left\||t|^n \psi(t)\right\|_{H^b}\leq \left\||t|^n \psi(t)\right\|_{H^1} \lesssim n$, it follows 
\begin{equation}\label{BLE3}
    \begin{aligned}
     \left\|g_{\xi}\right\|_{H^{b}_t} \lesssim 2^{\frac{\beta }{\beta-\alpha}}\sum_{n=1}^{\infty} \frac{2^{\frac{\beta (n-1)}{\beta-\alpha}}}{(n-1)!} \lesssim 1.
    \end{aligned}
\end{equation}
Hence combining \eqref{BLE2} and \eqref{BLE3} we arrive at
\begin{equation*}
    \left\|g_{\xi}\right\|_{H^{b}_t} \lesssim \langle |\xi|^{\alpha}-|\xi|^{\beta} \rangle^{b-1/2}, 
\end{equation*}
for $b \in \left\{0,1/2\right\}$. The desire estimate now follows substituting the above inequality in \eqref{BLE1}.
\end{proof}
A simple inspection to the proof of Lemma \ref{BLILEMMA1} shows that our arguments were divided according to the regions  $||\xi|^{\alpha}-|\xi|^{\beta} |\sim |\xi|^{\beta}$ and $||\xi|^{\alpha}-|\xi|^{\beta}|\sim |\xi|^{\alpha}$. In this manner, following this same procedure and adapting the ideas in  \cite{P} and \cite{moli-Rib}, it is not difficult to deduce the following conclusions:
\begin{prop}\label{BLIPROP2}
Let $s\in \mathbb{R}$
\begin{itemize}
    \item[(i)] There exists $C>0$ such that, for all $v\in S(\mathbb{R}^2)$,
    \begin{equation}
    \begin{aligned}
     &\left\|\chi_{\mathbb{R}^{+}}(t)\psi(t)\int_0^t S(t-\tau)v(\tau)\, d\tau\right\|_{X^{1/2,s}} \\
     & \hspace{3cm}\leq C \left[\left\|v\right\|_{X^{-1/2,s}}+\left(\int \langle \xi \rangle^{2s}\left(\int \frac{|\widehat{w}(\tau)|}{\langle i\tau -(|\xi|^{\alpha}-|\xi|^{\beta})\rangle}\, d\tau\right)^2 d\xi\right)^{1/2}\right].
    \end{aligned}
    \end{equation}
    
    \item[(ii)] For any $0<\delta <1/2$ there exists $C_{\delta}>0$ such that for all $v\in X^{-1/2+\delta,s}$
    \begin{equation}\label{FORCINESTM}
        \left\|\chi_{\mathbb{R}^{+}}(t)\psi(t)\int_0^t S(t-\tau)v(\tau)\, d\tau\right\|_{X^{1/2,s}} \leq C_{\delta} \left\|v\right\|_{X^{-1/2+\delta,s}}.
    \end{equation}
\end{itemize}
\end{prop}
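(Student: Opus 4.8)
The plan is to follow the Duhamel-in-frequency decomposition of Molinet and Ribaud \cite{moli-Rib} (as adapted to the present symbol in \cite{P}), reducing the two-dimensional estimate to one-dimensional estimates in the time-frequency variable $\tau$ for each fixed $\xi$, and then to split the frequency axis into the two regimes $|\xi|\geq 2^{1/(\beta-\alpha)}$, where $||\xi|^{\alpha}-|\xi|^{\beta}|\sim|\xi|^{\beta}$, and $|\xi|<2^{1/(\beta-\alpha)}$, where the dissipative symbol is bounded, exactly as in the proof of Lemma \ref{BLILEMMA1}. This last splitting is what handles the fact that $|\xi|^{\alpha}-|\xi|^{\beta}$ is not single-signed.

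For part (i), I would first take the spatial Fourier transform and set $p(\xi)=-i\xi|\xi|+(|\xi|^{\alpha}-|\xi|^{\beta})$, so that $S(t)$ acts for $t\geq 0$ as multiplication by $e^{tp(\xi)}$. Writing $v$ through its full space-time Fourier transform and computing the inner time integral gives, for fixed $\xi$,
\begin{equation*}
\chi_{\mathbb{R}^{+}}(t)\psi(t)\int_{0}^{t}e^{(t-t')p(\xi)}\widehat{v}(\xi,t')\,dt'=\chi_{\mathbb{R}^{+}}(t)\psi(t)\int_{\mathbb{R}}\widehat{v}(\xi,\tau)\,\frac{e^{it\tau}-e^{tp(\xi)}}{i\tau-p(\xi)}\,d\tau,
\end{equation*}
and here $i\tau-p(\xi)=i(\tau+\xi|\xi|)-(|\xi|^{\alpha}-|\xi|^{\beta})$ is precisely the weight defining the $X^{b,s}$ norm. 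In the region where the modulation $|i\tau-p(\xi)|\gtrsim 1$ I would split the kernel into the semigroup term carrying $e^{tp(\xi)}$ and the oscillatory term carrying $e^{it\tau}$. The semigroup term equals $-\psi(t)S(t)\varphi$ with $\widehat{\varphi}(\xi)=\int\frac{\widehat{v}(\xi,\tau)}{i\tau-p(\xi)}\,d\tau$, so Lemma \ref{BLILEMMA1} bounds its $X^{1/2,s}$ norm by $\left\|\varphi\right\|_{H^s}$; using $|i\tau-p(\xi)|\gtrsim\langle i\tau-p(\xi)\rangle$ in this region and performing the dispersive shift $\widehat{w}(\xi,\tau):=\widehat{v}(\xi,\tau-\xi|\xi|)$ (a measure-preserving change of variable in $\tau$), this is exactly the correction term appearing in (i).

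For the oscillatory term the factor $(i\tau-p(\xi))^{-1}$ gains a full power of the modulation, which is what upgrades $X^{-1/2,s}$ to $X^{1/2,s}$; the remaining care is that the cutoff $\chi_{\mathbb{R}^{+}}(t)\psi(t)$ acts in the $\tau$ variable by convolution with the smooth, rapidly decaying $\mathcal{F}_t(\chi_{\mathbb{R}^{+}}\psi)$, and I would control this by the boundedness of convolution with $\widehat{\psi}$ on the relevant weighted $L^2_{\tau}$ space, yielding $\left\|v\right\|_{X^{-1/2,s}}$. I expect the main obstacle to be the low-modulation region $|i\tau-p(\xi)|\lesssim 1$, where the naive separation of $e^{it\tau}$ from $e^{tp(\xi)}$ is illegitimate: there I would instead treat the whole kernel by the Taylor expansion $\frac{e^{it\tau}-e^{tp(\xi)}}{i\tau-p(\xi)}=e^{tp(\xi)}\sum_{k\geq1}\frac{t^{k}(i\tau-p(\xi))^{k-1}}{k!}$, which removes the apparent singularity and lets one absorb the powers of $i\tau-p(\xi)$ against the decay of $t^{k}\psi(t)$, precisely as in the series estimate leading to \eqref{BLE3}; since $\langle i\tau-p(\xi)\rangle\sim1$ here, all three norms $X^{\pm1/2,s}$ and the correction term coincide up to constants, so this piece contributes harmlessly to $\left\|v\right\|_{X^{-1/2,s}}$.

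Finally, part (ii) follows from (i) by Cauchy--Schwarz in $\tau$. Writing $\langle i\tau-(|\xi|^{\alpha}-|\xi|^{\beta})\rangle^{-1}=\langle\,\cdot\,\rangle^{-1/2-\delta}\langle\,\cdot\,\rangle^{-1/2+\delta}$ and using that $\int_{\mathbb{R}}\langle i\tau-c\rangle^{-1-2\delta}\,d\tau\leq C_{\delta}$ uniformly in $c=|\xi|^{\alpha}-|\xi|^{\beta}$ (the condition $\delta>0$ is exactly what makes this integral converge, while $\delta<1/2$ keeps the target space of negative order), the correction term is dominated by $\bigl(\int\langle\xi\rangle^{2s}\int\langle i\tau-(|\xi|^{\alpha}-|\xi|^{\beta})\rangle^{-1+2\delta}|\widehat{w}(\xi,\tau)|^{2}\,d\tau\,d\xi\bigr)^{1/2}$, which equals $\left\|v\right\|_{X^{-1/2+\delta,s}}$ after undoing the shift. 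Combined with the trivial monotonicity $\left\|v\right\|_{X^{-1/2,s}}\leq\left\|v\right\|_{X^{-1/2+\delta,s}}$, this yields \eqref{FORCINESTM}.
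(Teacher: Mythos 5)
Your overall architecture --- the explicit kernel $\frac{e^{it\tau}-e^{tp(\xi)}}{i\tau-p(\xi)}$, the splitting into low and high modulation, the Taylor expansion in the region $|i\tau-p(\xi)|\lesssim 1$, the reduction of the semigroup contribution to the free-evolution estimate, and the Cauchy--Schwarz deduction of (ii) from (i) --- is exactly the Molinet--Ribaud scheme that the paper invokes; the paper gives no proof of this proposition beyond pointing to \cite{moli-Rib}, \cite{P} and the frequency splitting already used in Lemma \ref{BLILEMMA1}. Your low-modulation treatment and your derivation of (ii) from (i) are correct.

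There is, however, a genuine gap in the high-modulation region, and it sits precisely at the technical heart of this lemma. The two pieces you separate there are individually \emph{not} in $X^{1/2,s}$: at $t=0$ both $e^{it\tau}$ and $e^{tp(\xi)}$ equal $1$, so $\chi_{\mathbb{R}^{+}}(t)\psi(t)\int_{|i\tau-p|>1}\frac{\widehat{v}\,e^{it\tau}}{i\tau-p}\,d\tau$ and $\chi_{\mathbb{R}^{+}}(t)\psi(t)S(t)\varphi$ each carry a jump of size $\widehat{\varphi}(\xi)=\int_{|i\tau-p|>1}\frac{\widehat{v}}{i\tau-p}\,d\tau$ at $t=0$, and a jump discontinuity forces the time-Fourier transform to decay no faster than $\langle\lambda\rangle^{-1}$, so the $\langle\cdot\rangle^{1/2}$-weighted $L^2_{\lambda}$ norm diverges. (Note also that Lemma \ref{BLILEMMA1} bounds $\psi(t)S(t)\varphi$, not $\chi_{\mathbb{R}^{+}}(t)\psi(t)S(t)\varphi$.) Relatedly, your assertion that $\mathcal{F}_t(\chi_{\mathbb{R}^{+}}\psi)$ is smooth and rapidly decaying is false: because $\chi_{\mathbb{R}^{+}}\psi$ jumps at the origin, its Fourier transform decays only like $\langle\lambda\rangle^{-1}$, the Young-type convolution argument then requires $\langle\lambda\rangle^{-1/2}\in L^1$, which fails, and indeed multiplication by $\chi_{\mathbb{R}^{+}}\psi$ is \emph{not} bounded on $H^{1/2}_t$ (multiply any smooth compactly supported $g$ with $g(0)\neq 0$). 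This is exactly the borderline $b=1/2$ difficulty that makes the proposition nontrivial. The standard repair, which you need to incorporate, is to split the high-modulation kernel as $\frac{e^{it\tau}-1}{i\tau-p}+\frac{1-e^{tp}}{i\tau-p}$ so that each piece vanishes at $t=0$: the second gives $\chi_{\mathbb{R}^{+}}(t)\psi(t)(1-e^{tp(\xi)})\widehat{\varphi}(\xi)$, whose weighted norm in $t$ is bounded uniformly in $\xi$ by a computation of the same type as the one for $g_{\xi}$ in Lemma \ref{BLILEMMA1} (this is what produces the correction term in (i)), while the first is handled by exploiting the cancellation in $\mathcal{F}_t(\chi_{\mathbb{R}^{+}}\psi)(\lambda-\tau)-\mathcal{F}_t(\chi_{\mathbb{R}^{+}}\psi)(\lambda)$, whose improved decay in $\lambda$ (coming from the continuity of $t\,\chi_{\mathbb{R}^{+}}(t)\psi(t)$) restores summability and yields the $\left\|v\right\|_{X^{-1/2,s}}$ bound. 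Without this (or an equivalent device) the high-modulation estimate does not close.
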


\begin{prop}\label{propconti}
Let $s\in \mathbb{R}$ and $\delta>0$. For all $T>0$ and each $f\in X^{-1/2+\delta,s}$,
\begin{equation}\label{BLE9}
    t \mapsto \int_0^t S(t-\tau)f(\tau)\, d \tau \in C([0,T];H^{s+\beta\delta}(\mathbb{R})).
\end{equation}
Moreover, if $(f_n)$ is a sequence with $f_n \underset{n\to \infty}{\rightarrow} 0$ in $X^{-1/2+\delta,s},$ then
\begin{equation}\label{BLE9.0}
    \left\|\int_0^t S(t-\tau)f_n(\tau)\, d \tau\right\|_{L^{\infty}([0,T];H^{s+\beta \delta})} \to 0.
\end{equation}
\end{prop}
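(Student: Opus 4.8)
The plan is to reduce both assertions to a single smoothing estimate,
\[\sup_{t\in[0,T]}\left\|\int_0^t S(t-\tau)f(\tau)\,d\tau\right\|_{H^{s+\beta\delta}} \lesssim \left\|f\right\|_{X^{-1/2+\delta,s}},\]
with implicit constant depending on $T$, $\delta$ and $\beta$, after which the two conclusions follow by soft arguments. First I would pass to the space--time Fourier side. Writing $p(\xi)=\xi|\xi|$ and $k(\xi)=|\xi|^{\alpha}-|\xi|^{\beta}$, for $0\leq \tau\leq t$ one has $\mathcal{F}_x(S(t-\tau)f(\tau))(\xi)=e^{(-ip(\xi)+k(\xi))(t-\tau)}\mathcal{F}_xf(\tau)(\xi)$; inserting the inverse time-Fourier representation of $\mathcal{F}_xf(\tau)(\xi)$ and integrating the resulting exponential in $\tau$ yields, with $\lambda=\sigma+p(\xi)$,
\[\mathcal{F}_x\left(\int_0^t S(t-\tau)f(\tau)\,d\tau\right)(\xi)=e^{-ip(\xi)t}\int_{\mathbb{R}}\frac{e^{i\lambda t}-e^{k(\xi)t}}{i\lambda-k(\xi)}\,\widehat{f}(\xi,\sigma)\,d\sigma.\]
Since $i\lambda-k(\xi)=i(\sigma+\xi|\xi|)-(|\xi|^{\alpha}-|\xi|^{\beta})$ is exactly the weight defining \eqref{BourgSpac} and $|e^{-ip(\xi)t}|=1$, the whole estimate is encoded in the scalar kernel $h_{\xi}(\lambda)=(e^{i\lambda t}-e^{k(\xi)t})/(i\lambda-k(\xi))$.

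Next I would record the two elementary bounds $|h_{\xi}(\lambda)|\lesssim t$ (from $|e^{i\lambda t}-e^{k(\xi)t}|\lesssim t(|\lambda|+|k(\xi)|)$ together with $|i\lambda-k(\xi)|\sim |\lambda|+|k(\xi)|$) and $|h_{\xi}(\lambda)|\leq 2/|i\lambda-k(\xi)|$, the latter valid once $|\xi|$ is large enough that $k(\xi)\leq 0$. Writing $|\widehat{f}(\xi,\sigma)|=\langle i\lambda-k(\xi)\rangle^{1/2-\delta}\langle \xi\rangle^{-s}G(\xi,\sigma)$ with $\left\|G\right\|_{L^2}=\left\|f\right\|_{X^{-1/2+\delta,s}}$ and applying Cauchy--Schwarz in $\sigma$ reduces the smoothing estimate to the uniform-in-$\xi$ bound
\[\langle \xi\rangle^{2\beta\delta}\int_{\mathbb{R}}|h_{\xi}(\lambda)|^{2}\,\langle i\lambda-k(\xi)\rangle^{1-2\delta}\,d\lambda\lesssim 1.\]
Splitting the $\lambda$-integral into $|\lambda|\lesssim |k(\xi)|$ and $|\lambda|\gtrsim |k(\xi)|$ and using $|i\lambda-k(\xi)|\geq |k(\xi)|\sim |\xi|^{\beta}$ in the dissipative regime $|\xi|\gtrsim 1$, each piece is $\lesssim |k(\xi)|^{-2\delta}\sim |\xi|^{-2\beta\delta}$, which precisely absorbs the prefactor $\langle \xi\rangle^{2\beta\delta}$; this is the source of the gain of $\beta\delta$ derivatives.

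I expect the main obstacle to be the low-frequency region. For $|\xi|\leq 1$ one has $k(\xi)=|\xi|^{\alpha}-|\xi|^{\beta}\geq 0$, so $e^{k(\xi)t}$ is not a decaying factor and no dissipative smoothing is available, while the denominator $i\lambda-k(\xi)$ can degenerate where $k(\xi)$ is comparable to $\lambda$. There I would not rely on dissipation but simply use that $\langle \xi\rangle^{2\beta\delta}$ and $e^{k(\xi)t}\leq \psi_{\alpha,\beta}(t)$ are bounded on $\{|\xi|\leq 1\}\times[0,T]$, so that this region contributes a finite amount with a constant depending on $T$ through \eqref{expsg}. This is precisely where the splitting according to $||\xi|^{\alpha}-|\xi|^{\beta}|\sim |\xi|^{\beta}$ versus $||\xi|^{\alpha}-|\xi|^{\beta}|\sim |\xi|^{\alpha}$, exactly as in the proof of Lemma \ref{BLILEMMA1}, becomes necessary.

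With the smoothing estimate established the two assertions follow quickly. Conclusion \eqref{BLE9.0} is immediate: by linearity the estimate applied to $f_n$ gives $\|\int_0^t S(t-\tau)f_n(\tau)\,d\tau\|_{H^{s+\beta\delta}}\lesssim \left\|f_n\right\|_{X^{-1/2+\delta,s}}$ uniformly on $[0,T]$, which tends to $0$. For \eqref{BLE9} I would first treat $f\in S(\mathbb{R}^2)$: decomposing $w(t)-w(t')=\int_{t'}^{t}S(\sigma)f(t-\sigma)\,d\sigma+\int_0^{t'}S(\sigma)[f(t-\sigma)-f(t'-\sigma)]\,d\sigma$, the first term is $O(|t-t'|)$ by the non-smoothing bound $\left\|S(\sigma)\phi\right\|_{H^{s+\beta\delta}}\lesssim \psi_{\alpha,\beta}(\sigma)\left\|\phi\right\|_{H^{s+\beta\delta}}$ of Proposition \ref{aprop1} together with the Schwartz regularity of $f$, and the second tends to $0$ by the strong continuity of $S(\cdot)$ and dominated convergence. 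Finally, since $S(\mathbb{R}^2)$ is dense in $X^{-1/2+\delta,s}$, the smoothing estimate shows that the map $f\mapsto \int_0^{\cdot}S(\cdot-\tau)f(\tau)\,d\tau$ is continuous from $X^{-1/2+\delta,s}$ into $L^{\infty}([0,T];H^{s+\beta\delta})$; thus $w$ is a uniform limit of functions in $C([0,T];H^{s+\beta\delta})$ and hence belongs to $C([0,T];H^{s+\beta\delta})$, as claimed.
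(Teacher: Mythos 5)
Your argument is correct and is essentially the proof the paper intends: the paper does not write out a proof of Proposition \ref{propconti} but defers to the splitting used in Lemma \ref{BLILEMMA1} and to \cite{P,moli-Rib}, and your computation --- the explicit kernel $(e^{i\lambda t}-e^{k(\xi)t})/(i\lambda-k(\xi))$, Cauchy--Schwarz in $\sigma$, dissipative gain of $|\xi|^{-2\beta\delta}$ for $|\xi|\gtrsim 1$, and crude boundedness on the non-dissipative region $|\xi|\lesssim 1$ where $|\xi|^{\alpha}-|\xi|^{\beta}\geq 0$ --- is exactly that standard Molinet--Ribaud argument. The reduction of \eqref{BLE9} to Schwartz data plus the uniform smoothing estimate, and of \eqref{BLE9.0} to linearity, is also the expected soft part.
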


We conclude this subsection with the following result which will be used in the proof of Theorem \ref{globalwlowdis} and is proved following a similar reasoning in \cite[Lemma 2.5]{GINIBRE}

\begin{lemma}\label{lemmalocesti}
Let $s\in \mathbb{R}$. For any $\epsilon>0$ and $T\in(0,1]$, the following estimate follows
\begin{equation}
    \left\|\psi_T u\right\|_{X^{1/2,s}} \lesssim T^{-\epsilon} \left\|u\right\|_{X^{1/2,s}}.
\end{equation}
\end{lemma}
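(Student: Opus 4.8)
The plan is to freeze the spatial frequency and reduce the estimate to a one--dimensional statement in the time variable, uniform in $\xi$. First I would take the Fourier transform in $x$ only, writing $U_\xi(t)=\widehat{u}^{\,x}(\xi,t)$, and remove the dispersive phase by setting $W_\xi(t)=e^{it\xi|\xi|}U_\xi(t)$; since $|e^{it\xi|\xi|}|=1$ this multiplication preserves every $L^p_t$ norm. Because $\psi_T$ depends only on $t$, it commutes with the $x$--Fourier transform and with the phase factor, so $\psi_T u$ corresponds at frequency $\xi$ to $\psi_T W_\xi$. Writing $c=c(\xi)=\big||\xi|^{\alpha}-|\xi|^{\beta}\big|\ge 0$ and
$$\left\|g\right\|_{X_c}^2:=\int_{\mathbb{R}}(1+c^2+\tau^2)^{1/2}\,|\widehat{g}(\tau)|^2\,d\tau,$$
the identities $\|\psi_T u\|_{X^{1/2,s}}^2=\int\langle\xi\rangle^{2s}\|\psi_T W_\xi\|_{X_{c(\xi)}}^2\,d\xi$ and $\|u\|_{X^{1/2,s}}^2=\int\langle\xi\rangle^{2s}\|W_\xi\|_{X_{c(\xi)}}^2\,d\xi$ show it suffices to prove the fixed--frequency bound $\left\|\psi_T g\right\|_{X_c}\lesssim T^{-\epsilon}\left\|g\right\|_{X_c}$ with an implicit constant \emph{independent of} $c\ge 0$, after which one multiplies by $\langle\xi\rangle^{2s}$ and integrates in $\xi$.

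Next I would record the elementary equivalence $\left\|g\right\|_{X_c}^2\sim \langle c\rangle\,\left\|g\right\|_{L^2}^2+\left\|g\right\|_{\dot H^{1/2}}^2$, coming from $(1+c^2+\tau^2)^{1/2}\sim\langle c\rangle+|\tau|$, together with the crucial uniform embedding $\left\|g\right\|_{H^{1/2}}\le\left\|g\right\|_{X_c}$ (valid for every $c\ge 0$ because $(1+\tau^2)^{1/2}\le(1+c^2+\tau^2)^{1/2}$). The $L^2$ contribution is harmless: $\langle c\rangle^{1/2}\left\|\psi_T g\right\|_{L^2}\le\left\|\psi\right\|_{L^\infty}\langle c\rangle^{1/2}\left\|g\right\|_{L^2}\lesssim\left\|g\right\|_{X_c}$, with no loss in $T$. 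Thus everything reduces to controlling the homogeneous piece $\left\|\psi_T g\right\|_{\dot H^{1/2}}$, for which I would use the Gagliardo characterization $\left\|h\right\|_{\dot H^{1/2}}^2\sim\iint_{\mathbb{R}^2}\frac{|h(t)-h(t')|^2}{|t-t'|^2}\,dt\,dt'$ and the splitting $\psi_T(t)g(t)-\psi_T(t')g(t')=\psi_T(t)\big(g(t)-g(t')\big)+g(t')\big(\psi_T(t)-\psi_T(t')\big)$. The first resulting double integral is bounded by $\left\|\psi\right\|_{L^\infty}^2\left\|g\right\|_{\dot H^{1/2}}^2$, again losslessly. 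The second reduces to $\int_{\mathbb{R}}|g(t')|^2\,I(t')\,dt'$ with $I(t')=\int_{\mathbb{R}}\frac{|\psi_T(t)-\psi_T(t')|^2}{|t-t'|^2}\,dt$; using $\left\|\psi_T'\right\|_{L^\infty}\sim T^{-1}$, $\left\|\psi_T\right\|_{L^\infty}\lesssim1$ and $\supp\psi_T\subset[-2T,2T]$ one checks $I(t')\lesssim T^{-1}$ with $I$ essentially concentrated on $\{|t'|\lesssim T\}$, so that $\int|g|^2 I\lesssim T^{-1}\left\|g\right\|_{L^2(|t'|\lesssim T)}^2$ up to a rapidly decaying tail.

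The hard part is that this last bound, taken at face value, yields only the loss $T^{-1/2}$, whereas the whole point of the statement is to do better. Here I would invoke the endpoint Sobolev embedding $H^{1/2}(\mathbb{R})\hookrightarrow L^p(\mathbb{R})$, valid for \emph{every} finite $p$. Choosing $p=2/\epsilon$ and applying H\"older on the set $\{|t'|\lesssim T\}$ of measure $\sim T$ gives $\left\|g\right\|_{L^2(|t'|\lesssim T)}\lesssim T^{\frac12-\frac1p}\left\|g\right\|_{L^p}\lesssim T^{\frac12-\frac\epsilon2}\left\|g\right\|_{H^{1/2}}$, so that the $\psi_T$--difference term is $\lesssim T^{-\epsilon}\left\|g\right\|_{H^{1/2}}^2\lesssim T^{-\epsilon}\left\|g\right\|_{X_c}^2$ by the uniform embedding above; the decaying tail is handled identically by a dyadic decomposition in $t'$. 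Collecting the three contributions proves $\left\|\psi_T g\right\|_{X_c}\lesssim T^{-\epsilon}\left\|g\right\|_{X_c}$ uniformly in $c\ge0$, and integrating in $\xi$ finishes the proof.

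The only genuine subtlety, beyond the endpoint Sobolev trick, is keeping every constant independent of $c$, that is, of $\xi$. This is automatic in the argument above because the dissipative weight $c=\big||\xi|^\alpha-|\xi|^\beta\big|$ enters the norm $\|\cdot\|_{X_c}$ only additively and with a favorable sign: it strengthens $\|\cdot\|_{X_c}$ relative to $\|\cdot\|_{H^{1/2}}$, so it never degrades the embedding $\|g\|_{H^{1/2}}\le\|g\|_{X_c}$ nor any of the intermediate estimates, which is precisely what makes the extra growth--dissipation factor $|\xi|^\alpha-|\xi|^\beta$ of \eqref{fDBO} compatible with the Bourgain--space machinery.
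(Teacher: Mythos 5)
Your argument is correct, and it is worth noting at the outset that the paper does not actually write out a proof of Lemma \ref{lemmalocesti}: it is disposed of with a one-line reference to \cite[Lemma 2.5]{GINIBRE}, so yours is a genuine, self-contained substitute rather than a variant of an argument in the text. The reduction you perform --- freezing $\xi$, conjugating away the phase $e^{it\xi|\xi|}$, and observing that the dissipative weight $c(\xi)=\bigl||\xi|^{\alpha}-|\xi|^{\beta}\bigr|$ enters the norm \eqref{BourgSpac} only through the lossless term $\langle c\rangle^{1/2}\left\|\psi_T g\right\|_{L^2_t}\leq \left\|\psi\right\|_{L^\infty}\langle c\rangle^{1/2}\left\|g\right\|_{L^2_t}$ and through the uniform embedding $\left\|g\right\|_{H^{1/2}_t}\leq\left\|g\right\|_{X_c}$ --- is exactly the right way to make a one-dimensional statement apply here, and it is the only place where the specific structure of the growth--dissipation symbol matters. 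Where you genuinely diverge from the Ginibre--Tsutsumi--Velo route is in the one-dimensional endpoint estimate itself: their argument is Fourier-side, writing $\widehat{\psi_T g}=\widehat{\psi_T}\ast\widehat{g}$, splitting $\langle\tau\rangle^{1/2}\lesssim\langle\tau-\lambda\rangle^{1/2}+\langle\lambda\rangle^{1/2}$, and paying the factor $T^{-\epsilon}$ through $\left\|\langle\cdot\rangle^{1/2}\widehat{\psi_T}\right\|_{L^{q}}$ with $q$ slightly larger than $1$, combined with Young and H\"older; you instead work in physical space with the Gagliardo seminorm, isolate the commutator term $\int|g(t')|^2 I(t')\,dt'$ with $I(t')\lesssim T^{-1}\chi_{\{|t'|\lesssim T\}}(t')+T|t'|^{-2}\chi_{\{|t'|\gtrsim T\}}(t')$, and recover the $\epsilon$-loss from the endpoint embedding $H^{1/2}(\mathbb{R})\hookrightarrow L^{2/\epsilon}(\mathbb{R})$ together with H\"older on a set of measure $\sim T$ (and dyadically on the tail). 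The two mechanisms are of identical strength --- both lose exactly $T^{-\epsilon}$ at $b=1/2$ with constants blowing up as $\epsilon\to0$, which is all the contraction argument of Theorem \ref{globalwlowdis} needs --- but your version has the merit of making completely transparent why the implicit constant is uniform in $\xi$ (the weight $c$ only strengthens $\left\|\cdot\right\|_{X_c}$ relative to $\left\|\cdot\right\|_{H^{1/2}_t}$), at the price of invoking the Gagliardo characterization and the endpoint Sobolev embedding in place of elementary convolution inequalities.
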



\subsection{Bilinear estimates}

To deduce the crucial bilinear result we will apply the dyadic block estimates deduced by Vento in \cite{vento2011}. 

We first introduce some notation and results to be employed in our arguments which are based on Tao's $[k;Z]$-multiplier theory \cite{TaokZtheo}. Let $Z$ be any abelian additive group with an invariant measure $d\eta$. For any integer $k\geq 2$ we define the hyperplane
\begin{equation*}
    \Gamma_k(Z)=\left\{(\eta_1,\dots,\eta_k)\in Z^{k}\, : \, \eta_1+\dots+\eta_k=0\right\},
\end{equation*}
which is endowed with the measure
\begin{equation*}
    \int_{\Gamma_k(Z)} f=\int_{Z^{k-1}} f(\eta_1,\dots,\eta_{k-1},-(\eta_1+\dots+\eta_{k-1}))\, d\eta_1\dots d\eta_{k-1}.
\end{equation*}
A $[k;Z]$-multiplier is defined to be any function $m: \Gamma_k(Z)\rightarrow \mathbb{C}$. The norm of the multiplier $\left\|m\right\|_{[k;Z]}$ is defined to be the best constant such that the inequality 
\begin{equation}
    \left|\int_{\Gamma_k(Z)}m(\eta)\prod_{j=1}^k f_j(\eta_j)\right|\leq \left\|m\right\|_{[k;Z]}\prod_{j=1}^{k}\left\|f_j\right\|_{L^2(\mathbb{R})},
\end{equation}
holds for all test function $f_1,\dots,f_k$ on $Z$. In other words,
\begin{equation}
    \left\|m\right\|_{[k;Z]}=\sup_{\substack{f_j\in S(Z) \\ \left\|f_j\right\|_{L^2(Z)}\leq 1}}\left|\int_{\Gamma_k(Z)}m(\eta)\prod_{j=1}^k f(\eta_j)\right|.
\end{equation}

Following the notation in \cite{TaokZtheo}, capitalized variables such as $N_j, L_j, H$ are presumed to be
dyadic, i.e. these variables range over numbers of the form $2^l$ for $l\in \mathbb{Z}$. Let $N_1,N_2,N_3>0$. we define the quantities $N_{\max}\geq N_{med}\geq N_{min}$ to be the maximum, median, and minimum of $N_1,N_2, N_3$ respectively.
Similarly, we define  $L_{\max}\geq L_{med}\geq L_{min}$ whenever $L_1, L_2, L_3 >0$. The quantities
$N_j$ will measure the magnitude of frequencies of our waves, while $L_j$ measures how closely our waves approximate a free solution. 

We adopt the following summation conventions. Any summation of the form $L_{max} \sim \dots$ is a sum over the three dyadic variables $L_1,L_2,,L_3 \gtrsim 1$, thus for instance
\begin{equation*}
    \sum_{L_{max}\sim H}:=\sum_{L_1,L_2,L_3 \gtrsim 1 : L_{max}\sim  H}
\end{equation*}
Similarly, any summation of the form $N_{max}\sim \dots$ sum over the three dyadic variables $N_1, N_2, N_3 > 0$, hence
\begin{equation*}
    \sum_{L_{max}\sim H}:=\sum_{L_1,L_2,L_3 \gtrsim 1 : L_{max}\sim  H}
\end{equation*}

Due to the nonlinear term in \eqref{fDBO}, we will consider $[3;\mathbb{R}\times \mathbb{R}]$-multipliers and the variables will be set as $\eta=(\xi,\tau)$ with the usual Lebesgues measure $d\eta=d\xi\, d \tau$. We let
\begin{equation*}
    h_0(\theta)=-\theta |\theta|, \hspace{0.5cm} \lambda_j=\tau_j-h_0(\xi_j), \hspace{0.5cm} j=1,2,3,
\end{equation*}
and the resonance function
\begin{equation*}
    h(\xi)=h_0(\xi_1)+h_0(\xi_2)+h_0(\xi_3), \hspace{0.2cm} \xi=(\xi_1,\xi_2,\xi_3).
\end{equation*}
By a dyadic decomposition of the variables $\xi_j,\lambda_j,h(\xi)$, we will lead to estimate
\begin{equation}\label{multip}
    \left\|X_{N_1,N_2,N_3,H,L_1,L_2,L_3}\right\|_{[3;\mathbb{R}\times \mathbb{R}]},
\end{equation}
where $X_{N_1,N_2,N_3,H,L_1,L_2,L_3}$ is the multiplier 
\begin{equation}\label{BBLR1}
    X_{N_1,N_2,N_3,H,L_1,L_2,L_3}=\chi_{|h(\xi)|\sim  H}\prod_{j=1}^3 \chi_{|\xi_j|\sim N_j}\chi_{|\lambda_j|\sim L_j}.
\end{equation}
From the identities
\begin{equation}
    \xi_1+\xi_2+\xi_3=0
\end{equation}
and
\begin{equation}
    \lambda_1+\lambda_2+\lambda_3+h(\xi)=0,
\end{equation}
on the support of the multiplier, we see that \eqref{BBLR1} vanish unless
\begin{equation}\label{BLOCK1}
    N_{max}\sim N_{med}
\end{equation}
and
\begin{equation}\label{BLOCK2}
    L_{max}\sim \max(H,L_{med}).
\end{equation}
As a consequence it is not difficult to deduce the following result.
\begin{lemma}
On the support of $X_{N_1,N_2,N_3,H,L_1,L_2,L_3}$, one has
\begin{equation}\label{BLOCK3}
    H\sim N_{max}N_{min}.
\end{equation}
\end{lemma}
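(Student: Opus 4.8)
The plan is to prove the resonance identity $H \sim N_{\max}N_{\min}$ directly from the definition of the resonance function $h(\xi) = h_0(\xi_1)+h_0(\xi_2)+h_0(\xi_3)$ with $h_0(\theta) = -\theta|\theta|$, exploiting the frequency constraint $\xi_1+\xi_2+\xi_3 = 0$ together with the structural relation \eqref{BLOCK1}, namely $N_{\max} \sim N_{\mathrm{med}}$. First I would recall that $-\theta|\theta| = -\mathrm{sgn}(\theta)\theta^2$, so that on any configuration where two of the frequencies share the same sign and the third has the opposite sign, the quadratic terms combine with a nontrivial cross term rather than cancelling. Since $\xi_1+\xi_2+\xi_3 = 0$, the three frequencies cannot all carry the same sign (their sum would then be nonzero unless all vanish), so up to relabelling exactly two of them, say $\xi_a,\xi_b$, share one sign while $\xi_c$ carries the other, and $|\xi_c| = |\xi_a + \xi_b| = |\xi_a| + |\xi_b|$ is the largest in modulus.

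The key computation is then to evaluate $h(\xi)$ on such a configuration. Writing $\xi_c = -(\xi_a+\xi_b)$ with $\xi_a,\xi_b$ of one sign, one has $h_0(\xi_a)+h_0(\xi_b) = -\mathrm{sgn}(\xi_a)(\xi_a^2+\xi_b^2)$ and $h_0(\xi_c) = +\mathrm{sgn}(\xi_a)(\xi_a+\xi_b)^2$, so that after cancellation the absolute value reduces cleanly to $|h(\xi)| = 2|\xi_a||\xi_b|$. At this point I would invoke $N_{\max}\sim N_{\mathrm{med}}$ from \eqref{BLOCK1}: the two largest frequencies are comparable, and since $\xi_c$ is the largest we get $|\xi_a|\cdot|\xi_b| \sim N_{\max}N_{\min}$ (the two factors realizing $N_{\mathrm{med}}$ and $N_{\min}$, both comparable to $N_{\max}$ and $N_{\min}$ respectively). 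Feeding this into $|h(\xi)|\sim H$ from the support condition in \eqref{BBLR1} yields $H \sim N_{\max}N_{\min}$.

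The main obstacle, and the step requiring the most care, is handling all the sign and ordering cases uniformly: one must verify that regardless of which index plays the role of the odd-sign-out frequency and which of $N_1,N_2,N_3$ equals $N_{\max}$, the cross term $2|\xi_a||\xi_b|$ is always comparable to $N_{\max}N_{\min}$. This is where \eqref{BLOCK1} is essential, since without $N_{\max}\sim N_{\mathrm{med}}$ the product $|\xi_a||\xi_b|$ could instead be comparable to $N_{\mathrm{med}}N_{\min}$, which need not match $N_{\max}N_{\min}$. I would therefore organize the argument by first reducing to the canonical sign pattern via relabelling, establishing the exact identity $|h(\xi)| = 2|\xi_a||\xi_b|$, and only then applying the comparability of the two top frequencies to conclude. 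The degenerate possibility that one frequency vanishes is harmless since then the multiplier is supported where all quantities are comparable and the estimate holds trivially.
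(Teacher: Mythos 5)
Your computation is correct: the exact identity $|h(\xi)|=2|\xi_a||\xi_b|$ for the two same-signed frequencies, combined with $|\xi_c|=|\xi_a|+|\xi_b|$ being the largest and $N_{\max}\sim N_{\mathrm{med}}$ from \eqref{BLOCK1}, is precisely the standard argument the paper alludes to when it presents the lemma as a consequence of the frequency constraint (the paper itself omits the details). Nothing further is needed.
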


Our arguments depend on the following dyadic blocks estimates.
\begin{lemma}(\cite{vento2011})\label{lemmabil1}
Let $N_1,N_2,N_3,H,L_1,L_2,L_3>0$ satisfying \eqref{BLOCK1}, \eqref{BLOCK2} and \eqref{BLOCK3}.
\begin{itemize}
    \item[(i)] In the high modulation case $L_{max}\sim L_{med} \gg H$, we have
    \begin{equation}\label{HMCASE}
         \eqref{multip}\lesssim L_{min}^{1/2}N^{1/2}_{min}.
    \end{equation}
    \item[(ii)] In the low modulation case $L_{max}\sim H$,
    \begin{itemize}
        \item[(a)] ((++) coherence) if $N_{max}\sim N_{min}$ then 
           \begin{equation}\label{COHEPP}
        \text{ \eqref{multip}}\lesssim L_{min}^{1/2}L^{1/4}_{med}.
    \end{equation}
    \item[(b)] ((+-) coherence) If $N_2\sim N_3 \gg N_1$ and $H\sim L_1 \gtrsim L_2,L_3$, then for any $\gamma>0$ 
    \begin{equation}\label{COHEPN}
        \eqref{multip}\lesssim L_{min}^{1/2}\min(N_{min}^{1/2},N_{max}^{1/2-1/2\gamma}N_{min}^{-1/2 \gamma}L^{1/2 \gamma}_{med}.
    \end{equation}
    Similar for any permutations of indexes $\left\{1,2,3\right\}$.
    \item[(c)] In all other cases, the multiplier \eqref{multip} vanishes.   
    \end{itemize}
\end{itemize}
\end{lemma}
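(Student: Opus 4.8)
The plan is to follow Tao's $[k;Z]$-multiplier calculus from \cite{TaokZtheo}, specialized to the Benjamin--Ono symbol $h_0(\theta)=-\theta|\theta|$, reducing \eqref{multip} to the computation of the measure of a resonant fibre and then sharpening that computation in the coherent regimes by means of the curvature of $h_0$. Throughout I would keep the three support identities already recorded: $N_{max}\sim N_{med}$ from \eqref{BLOCK1}, $L_{max}\sim\max(H,L_{med})$ from \eqref{BLOCK2}, and the resonance relation $H\sim N_{max}N_{min}$ from \eqref{BLOCK3}. By duality the norm \eqref{multip} is the best constant in the trilinear form $\int_{\Gamma_3}X\prod_j f_j$; writing this form as a pairing $\langle f_k,g_k\rangle$ with $g_k$ the bilinear convolution of the two remaining factors and applying Cauchy--Schwarz, one obtains
$$\|X_{N_1,N_2,N_3,H,L_1,L_2,L_3}\|_{[3;\mathbb{R}\times\mathbb{R}]}^2\lesssim \min_{k}\ \sup_{\eta_k}\ \bigl|\{\eta_i:\ X\neq 0,\ \eta_i+\eta_j=-\eta_k\}\bigr|,$$
so everything reduces to bounding the measure of this fibre, which factors into a modulation slab of length $\sim\min(L_i,L_j)$ times a frequency set on which the two modulation conditions are compatible.

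I would first extract the baseline bound $\|X\|_{[3;\mathbb{R}\times\mathbb{R}]}\lesssim L_{min}^{1/2}N_{min}^{1/2}$, valid in every case: pulling out the factor carrying $L_{max}$ leaves the slab of length $\min$ of the two remaining modulations, which is $L_{min}$, while the frequency set has length $\lesssim N_{min}$ because $N_{max}\sim N_{med}$ confines the free frequency variable to an interval of size $N_{min}$. This is precisely case (i): in the high-modulation regime $L_{max}\sim L_{med}\gg H$ the baseline bound already is \eqref{HMCASE}, with no use of dispersion needed.

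The coherent cases $L_{max}\sim H$ require improving the frequency set through the nonvanishing of $h_0''$. Fixing the pulled-out output and letting $\xi_i$ vary with $\xi_j=-\xi_k-\xi_i$, compatibility of the two modulation slabs confines $\phi(\xi_i):=h_0(\xi_i)+h_0(\xi_j)$ to an interval of length $\sim L_{med}$, with $\phi'(\xi_i)=h_0'(\xi_i)-h_0'(\xi_j)$ the transversality factor. In the $(++)$-coherence case (ii)(a) all frequencies are comparable ($N_{max}\sim N_{min}$, hence $H\sim N_{min}^2$); a sign analysis excludes the cancelling opposite-sign configuration, leaving $|\phi''|\sim 1$, so van der Corput gives a frequency set of measure $\lesssim L_{med}^{1/2}$ and hence $\|X\|\lesssim L_{min}^{1/2}L_{med}^{1/4}$, which is \eqref{COHEPP} and genuinely improves the baseline since $L_{med}\lesssim H\sim N_{min}^2$. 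In the $(+-)$-coherence case (ii)(b), with $N_2\sim N_3\gg N_1$ and $L_1\sim H\gtrsim L_2,L_3$, the two large frequencies have opposite signs, so for the pair $(\xi_2,\xi_3)$ the phase collapses to $\phi=\xi_1(2\xi_2+\xi_1)$ with transversality $|\phi'|\sim N_{min}$; the transversal bilinear $L^2$ bound then yields a frequency set of measure $\lesssim L_{med}/N_{min}$, i.e. the estimate $L_{min}^{1/2}L_{med}^{1/2}N_{min}^{-1/2}$, while the trivial bound for that pairing gives $N_{max}^{1/2}$. Geometric interpolation of these two (via $\min(a,b)\le a^{1-1/\gamma}b^{1/\gamma}$) produces the family $N_{max}^{1/2-1/2\gamma}N_{min}^{-1/2\gamma}L_{med}^{1/2\gamma}$, and the baseline supplies the competing $N_{min}^{1/2}$; taking the minimum gives \eqref{COHEPN}. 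Finally, the vanishing statement (ii)(c) is read off the support identities: any $L_{max}\sim H$ configuration that is neither $(++)$- nor $(+-)$-coherent forces a sign and size pattern of $\xi_1,\xi_2,\xi_3$ incompatible with $\xi_1+\xi_2+\xi_3=0$, $\lambda_1+\lambda_2+\lambda_3+h(\xi)=0$ and $N_{max}\sim N_{med}$, so the fibre is empty.

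The main obstacle is the measure computation in the coherent regimes: identifying in each coherence class the correct pairing and computing the exact transversality or curvature of $\phi$, which turns the $L_{med}$-slab into the $L_{med}^{1/2}$ gain of (ii)(a) and into the $N_{min}$-transversal gain underlying the $\gamma$-family of (ii)(b). Because $h_0$ is only $C^1$ and $h_0''$ jumps across the origin, this must be carried out branch-by-branch in the signs of the $\xi_j$, and the dichotomy between the nondegenerate curvature bound and the fully transversal bound is precisely what separates the $(++)$ and $(+-)$ coherences. The remaining ingredients---the duality reduction and the Cauchy--Schwarz slab estimates---are the routine machinery of \cite{TaokZtheo}, and the whole computation is the one performed by Vento in \cite{vento2011}.
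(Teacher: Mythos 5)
Your proposal is correct and follows essentially the same route as the source: the paper itself gives no proof of Lemma \ref{lemmabil1} but quotes it from \cite{vento2011}, where Vento derives precisely these dyadic block bounds via Tao's $[k;Z]$-multiplier machinery from \cite{TaokZtheo} --- duality plus Cauchy--Schwarz fibre-measure estimates for the baseline bound \eqref{HMCASE}, the curvature gain ($|h_0''|\sim 1$ on the equal-sign pair, giving a sublevel set of measure $\lesssim L_{med}^{1/2}$) for \eqref{COHEPP}, and the transversality factor $|h_0'(\xi_2)-h_0'(\xi_3)|\sim N_{min}$ interpolated against the trivial frequency measure to produce the $\gamma$-family in \eqref{COHEPN}. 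Your reconstruction of these three mechanisms, including the $\gamma=1$ endpoint $L_{min}^{1/2}L_{med}^{1/2}N_{min}^{-1/2}$ and the reading of the vanishing case (ii)(c) off the support identities, is faithful to that argument.
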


The main goal of this section is to derive the following key bilinear estimate.

\begin{theorem}\label{bilineaest}
Let $1<\beta \leq 2$, $0<\alpha < \beta$ and $s>-\beta/4$. For all $T>0$, there exists $\delta,\nu>0$ such that for all $u,v \in X^{1/2,s}$ with compact support (in time) in $[-T,T]$
\begin{equation}
    \left\|\partial_x(uv)\right\|_{X^{-1/2+\delta,s}}\lesssim T^{\nu} \left\|u\right\|_{X^{1/2,s}}\left\|v\right\|_{X^{1/2,s}}.
\end{equation}
\end{theorem}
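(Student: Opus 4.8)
The plan is to dualize the estimate into a single $[3;\mathbb{R}\times\mathbb{R}]$-multiplier bound and then run Tao's dyadic summation using the block estimates of Lemma \ref{lemmabil1}, the decisive new ingredient being the dissipative gain carried by the weight $\langle|\lambda_j|+d_j\rangle$ with $d_j:=\left||\xi_j|^{\alpha}-|\xi_j|^{\beta}\right|$. First I would peel off the factor $T^{\nu}$ by a soft argument: since $u,v$ are supported in time in $[-T,T]$, so is $\partial_x(uv)$, and the standard time-localization inequality for Bourgain spaces (lowering the modulation index at the cost of a positive power of $T$) gives $\left\|\partial_x(uv)\right\|_{X^{-1/2+\delta,s}}\lesssim T^{\delta}\left\|\partial_x(uv)\right\|_{X^{-1/2+2\delta,s}}$ for $\delta>0$ small. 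Hence it suffices to prove the unconditional estimate $\left\|\partial_x(uv)\right\|_{X^{-1/2+2\delta,s}}\lesssim \left\|u\right\|_{X^{1/2,s}}\left\|v\right\|_{X^{1/2,s}}$ for some small $\delta>0$ and set $\nu=\delta$; the extra modulation room $2\delta$ is afforded by the strictness of $s>-\beta/4$. Dualizing against $w\in X^{1/2-2\delta,-s}$, writing $\lambda_j=\tau_j+\xi_j|\xi_j|$, and applying Plancherel on $\Gamma_3(\mathbb{R}\times\mathbb{R})$, this reduces to the multiplier estimate
\[
\left\|\frac{|\xi_3|\,\langle\xi_3\rangle^{s}}{\langle\xi_1\rangle^{s}\langle\xi_2\rangle^{s}\,\langle|\lambda_1|+d_1\rangle^{1/2}\langle|\lambda_2|+d_2\rangle^{1/2}\langle|\lambda_3|+d_3\rangle^{1/2-2\delta}}\right\|_{[3;\mathbb{R}\times\mathbb{R}]}\lesssim 1 .
\]

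Next I would perform the dyadic decomposition $|\xi_j|\sim N_j$, $|\lambda_j|\sim L_j$, $|h(\xi)|\sim H$ exactly as set up before \eqref{BBLR1}, so that on each block the multiplier above is comparable to a product of powers of the dyadic parameters times the block norm \eqref{multip}. The key observation is that $\langle|\lambda_j|+d_j\rangle\sim\max(L_j,\langle d_j\rangle)$ and that $d_j\sim N_j^{\beta}$ whenever $N_j$ exceeds the fixed constant $2^{1/(\beta-\alpha)}$; thus each modulation factor may be bounded below by either $L_j^{b_j}$ (spending dispersive modulation) or $N_j^{\beta b_j}$ (spending dissipation), and I would choose the more favorable option region by region. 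The range $N_{\max}\lesssim 1$ is harmless: there the derivative and all Sobolev weights are bounded, so the remaining sum reduces to a finite number of blocks controlled by the block estimates alone, even though $d_j$ degenerates near $|\xi_j|=1$. Consequently only $N_{\max}\gg 1$ matters, and by \eqref{BLOCK1} the two largest frequencies are comparable to $N_{\max}\gg1$, so both carry the full dissipative smoothing $N_{\max}^{\beta}$.

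Inserting Lemma \ref{lemmabil1} together with the constraints \eqref{BLOCK1}, \eqref{BLOCK2} and \eqref{BLOCK3}, I would then split into the high-modulation regime $L_{\max}\sim L_{med}\gg H$ (where \eqref{HMCASE} and the available $L_j^{b_j}$ easily close the sum), the $(++)$-coherent regime $N_{\max}\sim N_{\min}$ (bound \eqref{COHEPP}), and the $(+-)$-coherent regime (bound \eqref{COHEPN}), summing the resulting geometric series in $N_1,N_2,N_3,L_1,L_2,L_3,H$. In each case the free dissipative gain $N_{\max}^{\beta b_j}$ from the two top frequencies is what converts the derivative loss into net decay and lowers the admissible regularity to $s>-\beta/4$, mirroring Vento's analysis of the dissipative Benjamin--Ono equation.

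I expect the main obstacle to be the low-modulation $(+-)$-coherent regime (and its permutations), where the output frequency is comparable to the largest input, say $N_1\sim N_3\sim N_{\max}\gg N_2\sim N_{\min}$. There the derivative loss $|\xi_3|\sim N_{\max}$ is most severe and the Sobolev weight degenerates to $N_{\min}^{-s}$, so the estimate only closes when this loss is absorbed by a balance among the block gain \eqref{COHEPN}, the resonance identity $H\sim N_{\max}N_{\min}$, and the dissipative smoothing $N_{\max}^{\beta}$ from the two high-frequency modulation weights (the degeneracy of $d_j$ near $|\xi_j|=1$ affects only the lowest frequency, from which no dissipative gain is needed). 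Carrying out this balance while optimizing the parameter $\gamma$ in \eqref{COHEPN} is precisely what forces the threshold $s>-\beta/4$ and consumes the extra room $2\delta$. Finally, I would note that the growth term $D_x^{\alpha}$ plays only a cosmetic role: $d_j$ differs from $|\xi_j|^{\beta}$ only on the compact set $|\xi_j|\lesssim 1$, which contributes a finite sum, while for $|\xi_j|\gtrsim 1$ one has $d_j\sim|\xi_j|^{\beta}$, so its presence does not affect either the summation or the critical index.
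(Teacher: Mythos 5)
Your proposal follows essentially the same route as the paper: duality plus a time-localization gain (the paper's Lemma \ref{bilinesttime}) reduces matters to the single multiplier bound \eqref{BBLR2}, which is then handled by dyadic decomposition in $\xi_j,\lambda_j,h(\xi)$, Vento's block estimates (Lemma \ref{lemmabil1}) split into high modulation, $(++)$ and $(+-)$ coherence, and the observation that $||\xi|^{\alpha}-|\xi|^{\beta}|\sim|\xi|^{\beta}$ for $|\xi|\gg1$ while the region $|\xi|\lesssim1$ only contributes convergent low-frequency sums. One small correction: in the paper's computation the threshold $s>-\beta/4$ is forced by the $(++)$-coherent case and by the $(+-)$ subcase where the output frequency is the smallest one, whereas the $\gamma$-optimized subcase you single out ($N_2\sim N_3\gg N_1$, output at high frequency) already closes for $s>-1/2$.
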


Actually, we will consider the following bilinear estimate, which is a direct consequence of Theorem \ref{bilineaest}, together with the triangle inequality
\begin{equation}
\forall s \geq s_c^{+}, \hspace{0.2cm} \langle \xi \rangle^{s_c^{+}} \leq \langle \xi \rangle^{s_c^{+}}\langle \xi_1 \rangle^{s-s_c^{+}}+\langle \xi \rangle^{s_c^{+}}\langle \xi-\xi_1 \rangle^{s-s_c^{+}}.
\end{equation}

\begin{prop}\label{bilineaestcri}
 Given $s_{c}^{+}>-\beta/4$, there exist $\nu,\delta>0$ such that for any $s\geq s_c^{+}$ and $u,v \in X^{1/2,s}$ with compact support in $[-T,T]$,
\begin{equation}
    \left\|\partial_x(uv)\right\|_{X^{-1/2+\delta,s}}\lesssim T^{\nu}\left( \left\|u\right\|_{X^{1/2,s_c^{+}}}\left\|v\right\|_{X^{1/2,s}}+\left\|u\right\|_{X^{1/2,s}}\left\|v\right\|_{X^{1/2,s_c^{+}}}\right).
\end{equation}
\end{prop}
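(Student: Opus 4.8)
The plan is to obtain this estimate as a bookkeeping consequence of Theorem \ref{bilineaest} applied at the single, fixed base regularity $s_c^+$, transferring the surplus $s-s_c^+$ derivatives onto whichever of the two factors carries the high frequency. First I would fix once and for all the constants $\nu,\delta>0$ produced by Theorem \ref{bilineaest} at regularity $s_c^+$; this is legitimate since $s_c^+>-\beta/4$ and $1<\beta\leq 2$, and these constants will be exactly the ones claimed in the proposition, depending only on $s_c^+$ and not on the running index $s\geq s_c^+$.

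Next, working on the Fourier side, I would write $\widehat{uv}(\xi,\tau)$ as the convolution of $\widehat{u}$ and $\widehat{v}$ over the region $\xi=\xi_1+\xi_2$, $\tau=\tau_1+\tau_2$, and insert the elementary splitting stated just before the proposition, namely $\langle\xi\rangle^{s}\lesssim \langle\xi\rangle^{s_c^+}\langle\xi_1\rangle^{s-s_c^+}+\langle\xi\rangle^{s_c^+}\langle\xi_2\rangle^{s-s_c^+}$. This holds for $s\geq s_c^+$ because $\langle\cdot\rangle^{s-s_c^+}$ is subadditive, i.e. $\langle\xi\rangle^{s-s_c^+}\lesssim\langle\xi_1\rangle^{s-s_c^+}+\langle\xi_2\rangle^{s-s_c^+}$ whenever $\xi=\xi_1+\xi_2$ and $s-s_c^+\geq 0$. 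Since the quantity defining the $X^{-1/2+\delta,s}$-norm of $\partial_x(uv)$ is monotone in $|\widehat{u}|$ and $|\widehat{v}|$, one may assume these are nonnegative, so the splitting distributes cleanly across the convolution and breaks the norm into two contributions.

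For the first contribution, the factor $\langle\xi_1\rangle^{s-s_c^+}$ multiplying $\widehat{u}(\xi_1,\tau_1)$ is precisely the spatial Fourier symbol of $J^{s-s_c^+}$, so this term equals $\left\|\partial_x\big((J^{s-s_c^+}u)\,v\big)\right\|_{X^{-1/2+\delta,s_c^+}}$, with output weight $\langle\xi\rangle^{s_c^+}$. Because $J^{s-s_c^+}$ acts only in $x$, it preserves compact support in time, so $J^{s-s_c^+}u$ and $v$ are still supported in $[-T,T]$ and Theorem \ref{bilineaest} applies at level $s_c^+$, bounding this by $T^{\nu}\left\|J^{s-s_c^+}u\right\|_{X^{1/2,s_c^+}}\left\|v\right\|_{X^{1/2,s_c^+}}$. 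Finally I would observe that the weight $\langle\xi\rangle^{s_c^+}\langle\xi\rangle^{s-s_c^+}=\langle\xi\rangle^{s}$ gives the identity $\left\|J^{s-s_c^+}u\right\|_{X^{1/2,s_c^+}}=\left\|u\right\|_{X^{1/2,s}}$, yielding $T^{\nu}\left\|u\right\|_{X^{1/2,s}}\left\|v\right\|_{X^{1/2,s_c^+}}$. The second contribution is handled symmetrically, absorbing $\langle\xi_2\rangle^{s-s_c^+}$ into the $v$ factor and producing $T^{\nu}\left\|u\right\|_{X^{1/2,s_c^+}}\left\|v\right\|_{X^{1/2,s}}$; summing the two gives the claim.

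There is no genuine obstacle here, since the statement is essentially the single-regularity estimate rewritten after a Littlewood--Paley-type redistribution of derivatives. The only points requiring care are that $\nu,\delta$ be chosen once at the level $s_c^+$ and shown independent of $s$ (automatic, because the transfer via $J^{s-s_c^+}$ never re-enters Theorem \ref{bilineaest} at a varying regularity), that the passage to nonnegative Fourier data and the splitting under the convolution be justified, and that $J^{s-s_c^+}$ not disturb the time localization on $[-T,T]$; all three are immediate.
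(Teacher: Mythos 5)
Your argument is correct and is precisely the one the paper intends: the proposition is presented there as a direct consequence of Theorem \ref{bilineaest} together with the splitting $\langle\xi\rangle^{s}\lesssim\langle\xi\rangle^{s_c^{+}}\langle\xi_1\rangle^{s-s_c^{+}}+\langle\xi\rangle^{s_c^{+}}\langle\xi-\xi_1\rangle^{s-s_c^{+}}$ for $s\geq s_c^{+}$, which is exactly your redistribution of the surplus derivatives onto the high-frequency factor via $J^{s-s_c^{+}}$, with $\nu,\delta$ fixed once at the level $s_c^{+}$. The one point to record carefully is that the splitting is best inserted at the level of the $[3;\mathbb{R}\times\mathbb{R}]$-multiplier in \eqref{BBLR2} and handled by the comparison principle, rather than by literally replacing $\widehat{u},\widehat{v}$ with their absolute values, since the latter would destroy the compact time support on which the $T^{\nu}$ gain in Theorem \ref{bilineaest} depends.
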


The next lemma gives the contraction factor $T^{\nu}$ in our estimates (see \cite{otani}).
\begin{lemma}\label{bilinesttime}
Let $f\in L^2(\mathbb{R}^2)$ with compact on $[-T, T]$. For any $\theta>0$, there exists $\nu=\nu(\theta)$ such that
\begin{equation}
    \left\|\mathcal{F}^{-1}\left(\frac{\widehat{f}(\xi,\tau)}{\langle \tau+\xi|\xi| \rangle^{\theta}}\right)\right\|_{L^2_{\xi \tau}} \lesssim T^{\nu}\left\|f\right\|_{L^2_{xt}}.
\end{equation}
\end{lemma}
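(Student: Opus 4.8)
The plan is to reduce this two-dimensional estimate to a purely one-dimensional smoothing estimate in the time variable, where the gain of a positive power of $T$ comes from the short time support combined with a Sobolev embedding.

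First I would use Plancherel's theorem to rewrite the left-hand side as $\|\langle \tau+\xi|\xi|\rangle^{-\theta}\widehat{f}(\xi,\tau)\|_{L^2_{\xi\tau}}$, so that only the weighted Fourier side matters. Next, conjugating by the free Benjamin--Ono group, i.e. performing for each fixed $\xi$ the measure-preserving change of variables $\sigma=\tau+\xi|\xi|$, I would set $w(\xi,t)=e^{it\xi|\xi|}\,\mathcal{F}_x f(\xi,t)$. A direct computation shows $\widehat{f}(\xi,\sigma-\xi|\xi|)=\mathcal{F}_t w(\xi,\sigma)$, hence the left-hand side equals $\|\langle\sigma\rangle^{-\theta}\mathcal{F}_t w(\xi,\sigma)\|_{L^2_{\xi\sigma}}$. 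Since multiplication by the unimodular factor $e^{it\xi|\xi|}$ preserves the time support, $w(\xi,\cdot)$ is still supported in $[-T,T]$ for a.e. $\xi$, and $\|w\|_{L^2_{\xi t}}=\|f\|_{L^2_{xt}}$ by Plancherel in $x$.

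By Fubini it then suffices to prove the following one-dimensional statement uniformly in the parameter $\xi$: for every $g\in L^2(\mathbb{R})$ with $\supp g\subset[-T,T]$, one has $\|\langle\cdot\rangle^{-\theta}\widehat{g}\|_{L^2}=\|g\|_{H^{-\theta}}\lesssim T^{\nu}\|g\|_{L^2}$, since integrating the square of this inequality in $d\xi$ recovers the full bound. Because $\langle\sigma\rangle^{-\theta}\leq\langle\sigma\rangle^{-\theta'}$ whenever $\theta'\leq\theta$, I may fix $\theta'=\min\{\theta,1/4\}\in(0,1/2)$ and prove the estimate for this $\theta'$, which controls the original one. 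The estimate for $\theta'$ follows by duality: writing $\|g\|_{H^{-\theta'}}=\sup_{\|h\|_{H^{\theta'}}\leq 1}|\langle g,h\rangle|$ and using $\supp g\subset[-T,T]$, Hölder's inequality gives $|\langle g,h\rangle|\leq\|g\|_{L^2}\|h\|_{L^2([-T,T])}\leq(2T)^{\theta'}\|g\|_{L^2}\|h\|_{L^q}$ with $1/q=1/2-\theta'$; the one-dimensional Sobolev embedding $H^{\theta'}(\mathbb{R})\hookrightarrow L^q(\mathbb{R})$ then yields $\|g\|_{H^{-\theta'}}\lesssim T^{\theta'}\|g\|_{L^2}$, so one may take $\nu=\theta'$.

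The only delicate points, rather than genuine obstacles, are bookkeeping ones: checking that the modulation weight $\langle\tau+\xi|\xi|\rangle$ linearizes exactly to $\langle\sigma\rangle$ after the shift, and that the shift leaves both the time support and the $L^2$ norm invariant uniformly in $\xi$, so that Fubini applies with a $\xi$-independent constant. The actual mechanism producing the factor $T^{\nu}$ is entirely contained in the final Hölder step on the interval of length $2T$, which is available precisely because $\theta>0$ permits a strictly subcritical Sobolev exponent $q>2$.
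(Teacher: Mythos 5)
Your proof is correct. The paper itself does not prove this lemma but only cites Otani \cite{otani}, and your argument is precisely the standard one found there and in Molinet--Ribaud \cite{moli-Rib}: conjugate by the free Benjamin--Ono group so that the modulation weight $\langle\tau+\xi|\xi|\rangle$ becomes $\langle\sigma\rangle$ for each fixed $\xi$, note that this preserves both the time support and the $L^2$ norm, and then reduce via Fubini to the one-dimensional estimate $\|g\|_{H^{-\theta'}}\lesssim T^{\theta'}\|g\|_{L^2}$ for $g$ supported in $[-T,T]$, which you obtain by duality, H\"older on the interval of length $2T$, and the subcritical Sobolev embedding $H^{\theta'}(\mathbb{R})\hookrightarrow L^q(\mathbb{R})$ with $\theta'=\min\{\theta,1/4\}$. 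All steps, including the uniformity in $\xi$ and the explicit value $\nu=\theta'$, check out.
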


\begin{proof}[Proof of Theorem \ref{bilineaest}] By duality and Lemma \ref{bilinesttime}, it is enough to show 
\begin{equation}\label{BBLR2}
    \left\|\frac{\xi_3\langle \xi_3 \rangle^s \langle \xi_1 \rangle^{-s}\langle \xi_2 \rangle^{-s}}{\langle |\lambda_1|+||\xi_1|^{\alpha}-|\xi_1|^{\beta}|\rangle^{1/2}\langle |\lambda_2|+||\xi_2|^{\alpha}-|\xi_2|^{\beta}|\rangle^{1/2}\langle|\lambda_3|+||\xi_3|^{\alpha}-|\xi_3|^{\beta}|\rangle^{1/2-\delta}}\right\|_{[3;\mathbb{R}\times \mathbb{R}]} \lesssim 1.
\end{equation}
By a dyadic decomposition of the variables $\xi_j, \lambda_j$ and $h(\xi_j)$, we may assume $|\xi_j|\sim N_j$, $|\lambda_j|\sim L_j$ and $|h(\xi)|\sim H$. By the translation invariance of the $[k;Z]$-multiplier (see \cite[Lemma 3.4]{TaokZtheo}) we may restrict the multiplier to the region $L_j \gtrsim 1$ and $N_{max} \gtrsim 1$. Furthermore, to consider the particular structure of the operator $D_x^{\alpha}-D_x^{\beta}$ in the frequency domain, we define
\begin{equation*}
    I(N_j):=\inf_{|\xi_j|\sim N_j} ||\xi_j|^{\alpha}-|\xi_j|^{\beta}|,
\end{equation*}
and analogously we set $I(N)$. Noting that if $|\xi_j|\geq 2^{\frac{1}{\beta-\alpha}}$, $||\xi_j|^{\alpha}-|\xi_j|^{\beta}|=|\xi_j|^{\beta}-|\xi_j|^{\alpha}\geq |\xi_j|^{\beta}/2$, we deduce
\begin{equation}\label{BBLR3}
    \left\{\begin{aligned}
     &I(N_j)\lesssim \max\left\{N_j^{\alpha}, N_j^{\beta}\right\}, &&\text{ when } N_j\lesssim 1, \\
     &I(N_j)\sim N_j^{\beta}, &&\text{ when } N_j \gg 1.
    \end{aligned}\right.
\end{equation}
In view of \eqref{BBLR3}, we shall consider separately the cases $N\gg 1$ and $N\sim1$. Gathering the results in \cite{TaokZtheo} (Schur's test, comparison principle and orthogonality), it is deduced that \eqref{BBLR2} is bounded by one of the following inequalities
\begin{equation}\label{BBLR4}
    \begin{aligned}
       \sum_{N_{max}\sim N_{med} \sim N}\sum_{L_1,L_2,L_3 \gtrsim 1}&  \frac{N_3\langle N_3\rangle^s \langle N_1\rangle^{-s}\langle N_2\rangle^{-s}}{\langle L_1+I(N_1)\rangle^{1/2}\langle L_2+I(N_2)\rangle^{1/2}\langle L_2+I(N_2)\rangle^{1/2-\delta}} \\
     &\times \left\|X_{N_1,N_2,N_3,L_{max},L_1,L_2,L_3}\right\|_{[3;\mathbb{R}\times \mathbb{R}]},
    \end{aligned}
\end{equation}
and
\begin{equation}\label{BBLR5}
    \begin{aligned}
      \sum_{N_{max}\sim N_{med} \sim N}\sum_{L_{max}\sim L_{med}}\sum_{H \ll L_{max}}&  \frac{N_3\langle N_3\rangle^s \langle N_1\rangle^{-s}\langle N_2\rangle^{-s}}{\langle L_1+I(N_1)\rangle^{1/2}\langle L_2+I(N_2)\rangle^{1/2}\langle L_2+I(N_2)\rangle^{1/2-\delta}} \\
     &\times \left\|X_{N_1,N_2,N_3,H,L_1,L_2,L_3}\right\|_{[3;\mathbb{R}\times \mathbb{R}]}.
    \end{aligned}
\end{equation}
Therefore, we are reduced to bound the above expressions for all $N \gtrsim 1$. We will divide our arguments according to Lemma \ref{lemmabil1}.

\underline{\bf High modulation case.} Here $L_{max}\sim L_{med}\gg H$, and so we must show that $\eqref{BBLR5}\lesssim 1$. In fact, this result follows under the weaker assumption that $s>-1/2$. It easily seen 
\begin{equation*}
    N_3\langle N_3 \rangle^s \langle N_1 \rangle^{-s}\langle N_2 \rangle^{-s} \lesssim \langle N_{min} \rangle^{-s}N_{max},
\end{equation*}
then \eqref{HMCASE} yields 
\begin{equation}\label{BBLR6}
    \begin{aligned}
\eqref{BBLR5} \lesssim \sum_{N_{max}\sim N_{med} \sim N}\sum_{L_{max}\sim L_{med} \gtrsim NN_{min}}  \frac{\langle N_{min} \rangle^{-s}N L_{min}^{1/2}N_{min}^{1/2}}{\langle L_1+I(N_1)\rangle^{1/2}\langle L_2+I(N_2)\rangle^{1/2}\langle L_2+I(N_2)\rangle^{1/2-\delta}}. \\
    \end{aligned}
\end{equation}
 Note that for fixed $N_{max}$, $N_{min}$ the sum over $H\ll L_{max}$ in \eqref{BBLR5} is finite, since $H\sim N_{max}N_{min}$ in virtue of \eqref{BLOCK3}. Assuming that $N \sim 1$, we have $N_{max}\sim N_{med}\sim 1$, $N_{min}\lesssim 1$ and $L_{max}\sim L_{med} \gtrsim N_{min}$. Estimating under these assumptions one gets
\begin{equation}
    \langle L_1+I(N_1)\rangle^{1/2}\langle L_2+I(N_2)\rangle^{1/2}\langle L_3+I(N_3)\rangle^{1/2-\delta} \gtrsim L_{min}^{1/2}L_{max}^{\delta}N_{min}^{1/2-\delta}.
\end{equation}
Thus, it follows 
\begin{equation}
    \begin{aligned}
     \eqref{BBLR5} &\lesssim \sum_{N_{min}\lesssim 1}\sum_{L_{max}\sim L_{med} \gtrsim 1} \frac{\langle N_{min}\rangle^{-s}L_{min}^{1/2}N_{min}^{1/2}}{L_{min}^{1/2}L_{max}^{\delta}N_{min}^{1/2-\delta}} \\
     &\lesssim \sum_{N_{min}\lesssim 1} N_{min}^{\delta} \lesssim 1.
    \end{aligned}
\end{equation}
Now suppose that $N\gg 1$. In this case, $N_{max}\sim N_{med}\sim N \gg 1$, $L_{max}\sim L_{min} \gtrsim NN_{min}$, and so from \eqref{BBLR3} we obtain
\begin{equation}
    \langle L_1+I(N_1)\rangle^{1/2}\langle L_2+I(N_2)\rangle^{1/2}\langle L_3+I(N_3)\rangle^{1/2-\delta} \gtrsim L_{min}^{1/2}L_{max}^{\delta}(NN_{min}+N^{\beta})^{1/2-\delta}(NN_{min})^{1/2-\delta}.
\end{equation}
Then,
\begin{equation}
    \begin{aligned}
     \eqref{BBLR5} &\lesssim \sum_{N_{max}\sim N}\sum_{L_{max}\sim L_{med} \gtrsim N N_{max}} \frac{\langle N_{min}\rangle^{-s}N L_{min}^{1/2}N_{min}^{1/2}}{L_{min}^{1/2}L_{max}^{\delta}(NN_{min}+N^{\beta})^{1/2-\delta}(NN_{min})^{1/2-\delta}} \\
     &\lesssim \sum_{N_{min}> 0} \frac{\langle N_{min}\rangle^{-s}N N_{min}^{1/2}}{(NN_{min}+N^{\beta})^{1/2-\delta}(NN_{min})^{1/2-\delta}}.
    \end{aligned}
\end{equation}
Consequently,
\begin{equation*}
\begin{aligned}
  \eqref{BBLR5} &\lesssim \sum_{N_{min} \lesssim 1} \frac{N N_{min}^{1/2}}{N^{\beta/2-\beta\delta}(NN_{min})^{1/2-\delta}}+ \sum_{N_{min} \gtrsim 1} \frac{ N_{min}^{1/2-s}N}{(NN_{min})^{1-2\delta-\epsilon}N^{\beta \epsilon}}\\
    &\lesssim \sum_{N_{min} \lesssim 1} N_{min}^{\delta} N^{(1-\beta)/2+(\beta+1)\delta}+\sum_{N_{min} \gtrsim 1} N_{min}^{-1/2-s+2\delta+\epsilon} N^{2\delta-\epsilon(\beta-1)} \\
    & \lesssim 1,
\end{aligned}
\end{equation*}
 which holds when $\beta>1$, $\delta \ll 1$, $\epsilon=2\delta/(\beta-1)>0$ and $s>-1/2$. This completes the estimate $\eqref{BBLR5} \lesssim 1$. 

\underline{ \bf Low modulation case: (++) coherence}. Now we will show that $\eqref{BBLR4}\lesssim 1$, assuming that $L_{\max}\sim H$ and the contribution \eqref{COHEPP}. In this case, $N_{min}\sim N_{mid}\sim N_{max}\sim N$. If $N\gg 1$, \eqref{BBLR3} gives
\begin{equation}
\begin{aligned}
     \langle L_1+I(N_1)\rangle^{1/2}&\langle L_2+I(N_2)\rangle^{1/2}\langle L_3+I(N_3)\rangle^{1/2-\delta} \\
     &\gtrsim L_{min}^{1/2}L_{max}^{\delta}(L_{med}+N^{\beta})^{1/2}(L_{max}+N^{\beta})^{1/2-2\delta}\\
     &\gtrsim L_{min}^{1/2}L_{max}^{\delta}L_{med}^{1/4}N^{\beta/4}L_{max}^{1/2-2\delta}.
\end{aligned}
\end{equation}
Consequently, since $L_{max}\sim N^2$, we deduce
\begin{equation}
    \begin{aligned}
       \eqref{BBLR4} &\lesssim \sum_{L_{max} \sim N^2} \frac{N^{1-s}L_{min}^{1/2}L_{med}^{1/4}}{L_{min}^{1/2}L_{max}^{\delta}L_{med}^{1/4}N^{\beta/4}L_{max}^{1/2-2\delta}} \\
       &\lesssim \frac{N^{1-s}}{N^{\beta/4}N^{1-4\delta}}\\
       &\lesssim N^{-s-\beta/4+4\delta} \lesssim 1,
    \end{aligned}
\end{equation}
when $s>-\beta/4$ and $\delta \ll 
1$. Now, assume that $N\sim 1$, thus we find
\begin{equation}
\begin{aligned}
     \langle L_1+I(N_1)\rangle^{1/2}&\langle L_2+I(N_2)\rangle^{1/2}\langle L_3+I(N_3)\rangle^{1/2-\delta} \\
     &\gtrsim L_{min}^{1/2}L_{max}^{\delta}L_{med}^{1/4}L_{max}^{1/2-2\delta}.
\end{aligned}
\end{equation}
A similar reasoning as in the previous case leads to
\begin{equation}
     \begin{aligned}
       \eqref{BBLR4} 
       \lesssim N^{1-s}N^{-1+2\delta}\sim N^{-s+2\delta} \sim 1.
    \end{aligned}
\end{equation}
\underline{ \bf Low modulation case: (+-) coherence.} We will show that $\eqref{BBLR4}\lesssim 1$, when \eqref{COHEPN} holds. By symmetry it suffices to treat the cases
\begin{equation}\label{restric}
    \begin{aligned}
     N_1\sim N_2 \gg N_3, &\text{ and } H\sim L_3 \gtrsim L_1,L_2, \\
    N_2\sim N_3 \gg N_1, & \text{ and } H\sim L_1 \gtrsim L_2,L_3.
    \end{aligned}
\end{equation}
    In the first case, by letting $\gamma=1$ in \eqref{COHEPN}, it is easily seen 
    \begin{equation}
    \begin{aligned}
     \eqref{multip} \lesssim L_{min}^{1/2}\min(N_{min}^{1/2},N_{min}^{-1/2 }L^{1/2}_{med}) \lesssim L_{min}^{1/2}L_{med}^{1/4}.
    \end{aligned}
    \end{equation}
Then, when $N\gg 1$,
\begin{equation}
\begin{aligned}
     \langle L_1+I(N_1)\rangle^{1/2}&\langle L_2+I(N_2)\rangle^{1/2}\langle L_3+I(N_3)\rangle^{1/2-\delta} \\
     &\gtrsim L_{min}^{1/2}L_{max}^{\delta}N^{\beta/4}L_{med}^{1/4}(N_{max}N_{min})^{1/2-2\delta},
\end{aligned}
\end{equation}
so that
\begin{equation}
    \begin{aligned}
         \eqref{BBLR4} &\lesssim \sum_{N_3 >0} \sum_{L_{max}\sim NN_3} \frac{N_3 \langle N_3 \rangle^s N^{-2s}L_{min}^{1/2}L_{med}^{1/4}}{L_{min}^{1/2}L_{max}^{\delta}N^{\beta/4}L_{med}^{1/4}(NN_{3})^{1/2-2\delta}}\\
         &\lesssim \sum_{N_3 >0} \frac{N_3 \langle N_3 \rangle^s N^{-2s}}{N^{\beta/4+1/2-2\delta}N_{3}^{1/2-2\delta}} \\
         &\lesssim \sum_{N_3 >0} 
         N_3^{1/2+2\delta} \langle N_3 \rangle^s N^{-2s-\beta/4-1/2+2\delta}.
  \end{aligned}
\end{equation}
Since $-2s-\beta/4-1/2+2\delta<0$ the above inequality allow us to deduce
\begin{equation}
    \begin{aligned}
         \eqref{BBLR4} &\lesssim \sum_{N_3 \lesssim 1}
         N_3^{1/2+2\delta}+\sum_{N_3 \gtrsim 1}
         N_3^{-s-\beta/4+4\delta}\lesssim 1,
  \end{aligned}
\end{equation}
which holds for $\delta \ll 1$ and $s>-\alpha/4$. When $N\sim 1$ one gets
\begin{equation}
\begin{aligned}
     \langle L_1+I(N_1)\rangle^{1/2}&\langle L_2+I(N_2)\rangle^{1/2}\langle L_3+I(N_3)\rangle^{1/2-\delta} \\
     &\gtrsim L_{min}^{1/2}L_{max}^{\delta}L_{med}^{1/2}(N_{max}N_{min})^{1/2-2\delta},
\end{aligned}
\end{equation}
so since $L_{min} \gtrsim 1$, $N\sim 1$ and $\delta \ll 1$ we arrive at
\begin{equation}
    \begin{aligned}
         \eqref{BBLR4} &\lesssim \sum_{N_3 \lesssim 1} \sum_{L_{max}\sim N_3} \frac{N_3 \langle N_3 \rangle^s N^{-2s}L_{min}^{1/2}L_{med}^{1/4}}{L_{min}^{1/2}L_{max}^{\delta}L_{med}^{1/2}(NN_{3})^{1/2-2\delta}}\\
         & \lesssim \sum_{N_3} N_3^{1/2+2\delta}\langle N_3 \rangle^{s} N^{-1/2-2s+2\delta}\\
         &\lesssim \sum_{N_3 \lesssim 1} N_3^{1/2+2\delta} \lesssim 1.
  \end{aligned}
\end{equation}

Next we consider the second restriction in \eqref{restric}, i.e., $N_2\sim N_3 \gg N_1,  \text{ and } H\sim L_1 \gtrsim L_2,L_3$. Let $0<\gamma \ll 1$. We first suppose that $N_{min}^{1/2}\lesssim N_{max}^{1/2-1/2\gamma}N_{min}^{-1/2\gamma}L_{med}^{1/2\gamma}$, which shows $L_{med}\gtrsim N_{max}^{1-\gamma}N_{min}^{1+\gamma}$. One has the following lower bound 
\begin{equation}\label{BBLR6.5} 
\begin{aligned}
     \langle L_1+I(N_1)\rangle^{1/2}&\langle L_2+I(N_2)\rangle^{1/2}\langle L_3+I(N_3)\rangle^{1/2-\delta} \\
     &\gtrsim L_{min}^{1/2}L_{max}^{\delta}L_{max}^{1/2-\delta}\langle L_{med}+I(N)\rangle^{1/2-\delta}.
\end{aligned}
\end{equation}
Thus, \eqref{COHEPN} and \eqref{BBLR6.5} imply
\begin{equation}\label{BBLR7}
    \begin{aligned}
             \eqref{BBLR4} &\lesssim \sum_{N_1 \lesssim N} \sum_{L_{max}\sim NN_1} \frac{ \langle N_1 \rangle^{-s} N L_{min}^{1/2}N_1^{1/2}}{L_{min}^{1/2}L_{max}^{\delta}L_{max}^{1/2-\delta}\langle L_{med}+I(N)\rangle ^{1/2-\delta}}\\
             &\lesssim \sum_{N_1 \lesssim N}  \frac{ \langle N_1 \rangle^{-s} N N_1^{1/2}}{(NN_{1})^{1/2-\delta}\langle N^{1-\gamma}N_1^{1+\gamma}+I(N)\rangle^{1/2-\delta}}\\
             &\lesssim \sum_{N_1 \lesssim N}  \frac{ N_1^{\delta}\langle N_1 \rangle^{-s} N^{1/2+\delta}}{\langle N^{1-\gamma}N_1^{1+\gamma}+I(N)\rangle^{1/2-\delta}}.
  \end{aligned}
\end{equation}
When $N\sim 1$, \eqref{BBLR7} shows
\begin{equation}
    \begin{aligned}
             \eqref{BBLR4} 
             &\lesssim \sum_{N_1 \lesssim 1}   N_1^{\delta}\langle N_1 \rangle^{-s} \lesssim 1.
  \end{aligned}
\end{equation}
Suppose that $N\gg 1$, so $I(N)\sim N^{\beta}$. Then when $N_1 \lesssim 1$, \eqref{BBLR7} implies
\begin{equation}
    \begin{aligned}
             \eqref{BBLR4} 
             &\lesssim \sum_{N_1 \lesssim 1}N_1^{\delta}N^{(1-\beta)/2+\delta(1+\beta)}\lesssim 1,
  \end{aligned}
\end{equation}
for $\delta \ll 1$ and $\beta >1$. If $N_1\gtrsim 1$, from \eqref{BBLR7} we find
\begin{equation}
    \begin{aligned}
             \eqref{BBLR4} 
             &\lesssim \sum_{N_1 \gtrsim 1}  \frac{ N_1^{-s+\delta} N^{1/2+\delta}}{(N^{1-\gamma}N_1^{1+\gamma})^{1/2-\delta-\epsilon}N^{\beta \epsilon}} \\
             &\lesssim \sum_{N_1 \gtrsim 1}   N_1^{-s-1/2+(1+\gamma)(\delta+\epsilon)+\delta-\gamma/2} N^{\gamma(1/2-\delta)+2\delta-\epsilon(\beta-1+\gamma)}\lesssim 1
  \end{aligned}
\end{equation}
for $\delta,\gamma \ll 1$, $s>-1/2$ and $\epsilon=(2\delta+\gamma(1/2-\delta))/(\beta-1+\gamma)>0$. 

Now, we consider the case $N_{min}^{1/2} \gtrsim N_{max}^{1/2-1/2\gamma}N_{min}^{-1/2\gamma}L_{med}^{1/2\gamma}$, i.e., $L_{med} \lesssim N_{max}^{1-\gamma}N_{min}^{1+\gamma}$. First we assume that $N\sim1$. Since the right-hand side of \eqref{BBLR6.5} is  bounded below by $L_{min}^{1/2}L_{max}^{\delta}$ for $0<\delta<1/2$, one gets
\begin{equation}
    \begin{aligned}
             \eqref{BBLR4} &\lesssim \sum_{N_1 \lesssim 1} \sum_{L_{max}\sim NN_1} \frac{ \langle N_1 \rangle^{-s} N L_{min}^{1/2}N^{1/2-1/2\gamma}N_1^{-1/2\gamma}L_{med}^{1/2\gamma}}{L_{min}^{1/2}L_{max}^{\delta}} \\
                 &\lesssim \sum_{N_1 \lesssim 1} \langle N_1 \rangle^{-s} NN_1^{1/2}\lesssim 1.
    \end{aligned}
\end{equation}
Now we assume that $N\gg 1$. Inequality \eqref{BBLR6.5} allow us to deduce 
\begin{equation}
    \begin{aligned}
             \eqref{BBLR4} &\lesssim \sum_{N_1>0} \sum_{L_{max}\sim NN_1} \frac{ \langle N_1 \rangle^{-s} N L_{min}^{1/2}N^{1/2-1/2\gamma}N_1^{-1/2\gamma}L_{med}^{1/2\gamma}}{L_{min}^{1/2}L_{max}^{\delta}L_{max}^{1/2-\delta}\langle L_{med}+N^{\beta}\rangle ^{1/2-\delta}} \\
                 &\lesssim \sum_{N_1 >0}\sum_{L_1,L_2,L_3\gtrsim 1:L_{med} \lesssim N^{1-\gamma}N_1^{1+\gamma}}  \frac{N_1^{-1/2\gamma -1/2+\delta}\langle N_1 \rangle^{-s}N^{1-1/2\gamma +\delta}L_{med}^{1/2\gamma}}{L_{max}^{\delta}\langle L_{med}+N^{\beta}\rangle ^{1/2-\delta}}.
    \end{aligned}
\end{equation}
So when $N_1\lesssim 1$, $\delta \ll 1$ and $\beta>1$, we have
\begin{equation}
    \begin{aligned}
             \eqref{BBLR4} 
                 &\lesssim \sum_{N_1 \lesssim 1}  N_1^{-1/2\gamma -1/2+\delta}N^{1-1/2\gamma +\delta}N^{-\beta/2+\beta \delta}(N^{1-\gamma}N_1^{1+\gamma})^{1/2\gamma} \\
                  &\lesssim \sum_{N_1 \lesssim 1}  N_1^{\delta}N^{(1-\beta)/2+\delta(1+\beta)}\lesssim 1.
    \end{aligned}
\end{equation}
When $N_1 \gtrsim 1$,
\begin{equation}
    \begin{aligned}
             \eqref{BBLR4} 
                 &\lesssim \sum_{N_1 \gtrsim 1}  N_1^{-s-1/2-1/2\gamma+\delta}N^{1-1/2\gamma +\delta}(N^{1-\gamma}N_1^{1+\gamma})^{1/2\gamma-1/2+\delta+\epsilon}N^{-\beta \epsilon} \\
                  & \lesssim \sum_{N_1 \gtrsim 1}  N_1^{-s-1/2+(1+\gamma)(\delta+\epsilon)+\delta-\gamma/2}N^{\gamma(1/2-\delta)+2\delta-\epsilon(\beta-1+\gamma)}\lesssim 1.
    \end{aligned}
\end{equation}
This concludes the estimates regarding the low modulation case. Thus, the proof of Theorem \ref{bilineaest} is now completed.
\end{proof}

We are in condition to deduce Theorem \ref{globalwlowdis}. 

\begin{proof}[Proof of Theorem]
We divide the proof in three main steps.
\\ \\
1. \emph{Local well-posedness}. Let $u_0 \in H^s(\mathbb{R})$, $s>-\beta/4$. We consider the Banach space
$$Z=\left\{u\in X^{1/2,s}\,:\, \left\|u\right\|_{Z}=\left\|u\right\|_{X^{1/2,s_c^{+}}}+\gamma\left\|u\right\|_{X^{1/2,s}}<\infty \right\},$$ 
where $s_{c}^{+}\in (-\beta/4,\min\left\{0,s\right\})$ and $\gamma$ is defined for all nontrivial $u_0$,
\begin{equation*}
    \gamma=\frac{\left\|u_0\right\|_{H^{s_c^{+}}}}{\left\|u_0\right\|_{H^{s}}}.
\end{equation*}
 Given $0<T\leq 1$ to be chosen later, we define the integral map
 \begin{equation}\label{contramap}
     \Psi(u)=\psi(t)\left[S(t) u_0-\frac{\chi_{\mathbb{R}^{+}}(t)}{2}\int_{0}^t S(t-\tau) \partial_x(\psi_T(\tau)u(\tau))^2 \ d\tau \right],
 \end{equation}
for each $u \in Z$. In view of Lemmas \ref{BLILEMMA1}, \ref{lemmalocesti}, estimate \eqref{FORCINESTM}, Theorem \ref{bilineaest} and Proposition \ref{bilineaestcri} there exist some constants $c,\nu>0$ such that 
\begin{align}
\left\|\Psi(u)\right\|_{Z} &\leq c\left(\left\|u_0\right\|_{H^{s_c^{+}}}+\gamma\left\|u_0\right\|_{H^{s}}\right)+cT^{\nu}\left\|u\right\|_{Z}^2, \label{BBLR8} \\
\left\|\Psi(u)-\Psi(v)\right\|_{Z} & \leq c T^{\nu}\left\|u-v\right\|_{Z}\left\|u+v\right\|_{Z}, \label{BBLR9}
\end{align}
for all $u, v \in Z$. Thus, recalling the definition of $\gamma$, we consider $0<T\leq \min\left\{1,(16c^2\left\|u_0\right\|_{H^{s_c^{+}}})^{-1/\nu}\right\}$. Then \eqref{BBLR8} and \eqref{BBLR9} imply that $\Psi$ is a contraction on the ball $\left\{u\in Z : \left\|u \right\|_{Z}\leq 4c\left\|u_0\right\|_{H^{s_c^{+}}} \right\}$. Consequently the fixed-point Theorem assures the existence of a solution $u\in X^{1/2,s}$ of \eqref{inteq} on the time interval $[0,T]$, with $u(0)=u_0$. 
\\ \\
The continuity with respect to the initial data follows directly from Proposition \ref{propconti}. Moreover, the above contraction argument yields uniqueness of solution to the truncated
integral equation \eqref{inteqglob}. The proof of uniqueness for the integral equation \eqref{inteq} can be derived following the same arguments in \cite{moli-Rib} and \cite{vento2011}.
\\ \\
\emph{2. Regularity}. Now we establish that $ u\in C\left((0,T],H^{\infty}(\mathbb{R})\right)\cap X_{T}^{1/2,s}$ and the flow map data solution is smooth. Indeed, in view of Proposition \ref{aprop1}, $S(\cdot)u_0 \in C([0,\infty);H^{s}(\mathbb{R}))\cap C([0,\infty);H^{\infty}(\mathbb{R}))$. Then it follows from Theorem \ref{bilineaest}, Proposition \ref{propconti} and the local well-posedness that 
$$u\in C\left([0,T];H^s(\mathbb{R})\right)\cap C\left((0,T];H^{s+\beta \delta}(\mathbb{R})\right),$$ 
where $T=T(\left\|u_0\right\|_{H^{s_c^{+}}})$. Thus, an inductive argument, the uniqueness result and the fact that the time of existence of solutions depends uniquely on the $H^{s_c^{+}}(\mathbb{R})$-norm of the initial data yield
$$u\in C\left([0,T];H^s(\mathbb{R})\right)\cap C\left((0,T];H^{\infty}(\mathbb{R})\right).$$
The smoothness of the flow-map is a consequence of the implicit
function theorem (see for instance \cite[Remark 3]{bekiranov1996}).
\\ \\
 \emph{3. Global well-posedness}. We define
$$T^*=\sup\left\{T>0: \exists !\text{ solution of \eqref{inteq} in }  C\left([0,T];H^s(\mathbb{R})\right)\cap X_T^{1/2,s}\right\}.$$
Let $u \in C\left([0,T^*);H^s(\mathbb{R})\right)\cap C\left((0,T^*);H^{\infty}(\mathbb{R})\right)$ be the local solution of the integral equation associated to \eqref{fDBO} on the maximal interval $[0,T^*)$. We will prove that $T^*<\infty$ implies a contradiction. Since $u$ is smooth by the above step, we have that this function solves \eqref{fDBO} in a classical sense. Therefore, we can multiply \eqref{fDBO} by $u$ and integrating over $\mathbb{R}$ to obtain
\begin{equation*}
    \frac{1}{2}\frac{d}{dt}\left\|u(t)\right\|_{L^2}^2-\left\|D^{\alpha/2}u(t)\right\|_{L^2}^2+\left\|D^{\beta/2}u(t)\right\|_{L^2}^2=0,
\end{equation*}
given that $0<\alpha < \beta$ the above expression shows
 \begin{equation*}  
   \frac{1}{2}\frac{d}{dt}\left\|u(t)\right\|_{L^2}^2\leq \left\|u(t)\right\|_{L^2}^2.
 \end{equation*}
 Let $t_0\in(0,T^{\ast})$ fixed. Integrating the above inequality between $t_0$ and $t$ and applying Gronwall's inequality to the resulting expression one gets
\begin{equation*}
    \left\|u(t)\right\|_{L^2} \leq \left\|u(t_0)\right\|_{L^2}e^{(T^{\ast}-t_0)}\equiv M, \hspace{0.5cm} \forall t\in [t_0,T^{\ast}).
\end{equation*}
Recalling that $s_{c}^{+}\leq 0$, it follows
\begin{equation*}
    \left\|u(t)\right\|_{H^{s_c^{+}}} \leq  M, \hspace{0.5cm} \forall t\in [t_0,T^{\ast}).
\end{equation*}
Since the time of existence $T(\cdot)$ is a nonincreasing function of the $H^{s_c^{+}}(\mathbb{R})$-norm, there exists  a time $\widetilde{T}>0$, such that for all $\tilde{u}_0 \in H^s(\mathbb{R})$ with $\left\|\tilde{u}_0 \right\|_{H^{s_c^{+}}}\leq M$, there exists a function $\tilde{u}\in C([0,\widetilde{T}];H^s(\mathbb{R}))\cap X_{T'}^{1/2,s}$ solution of the integral equation \eqref{inteq} with $\widetilde{u}(0)=\tilde{u}_0$. Let $0<\epsilon <\min\left\{\widetilde{T},(T^{\ast}-t_0)\right\}$, applying this result to $\tilde{u}_0=u(T^*-\epsilon)$, we define
\begin{equation}
v(t)=\begin{cases} 
      u(t), & \text{ when } \hspace{0.6cm} 0 \leq t \leq T^*-\epsilon, \\
     \tilde{u}(t-T^*+\epsilon), &\text{ when } \hspace{0.6cm} T^*-\epsilon \leq t \leq T^*+\widetilde{T}-\epsilon. \\
   \end{cases}
\end{equation}
Hence, $v(t)$ is a solution of the integral equation \eqref{inteq} on $[0,T^*+\widetilde{T}-\epsilon]$ with initial data $u_0$. Clearly, $T^*+\widetilde{T}-\epsilon >T^*$, which leads to a contradiction to the definition of $T^*$. This proves the global result.

\end{proof}

\section{Well-Posedness case \texorpdfstring{$\beta\geq 2$}{}.}

This section is devoted to prove local and global well-posedness for the equation \eqref{fDBO} when the dissipation order satisfies $\beta \geq 2$ and growth order $0< \alpha <\beta$. Our approach is based on the methods introduced in \cite{Dix}, which rely on the dissipation of the equation. Mainly, the strategy to obtain local existence is to construct a contraction mapping from the integral equation \eqref{inteq} acting on the Banach spaces $Y_T^s$ defined by \eqref{aeq5}. 
\\ \\
We first recall the results in Proposition \ref{aprop1} where it was established
\begin{equation}
    \begin{aligned}
     \left\|S(t)\phi\right\|_{H^s} \lesssim \psi_{\alpha,\beta}(t)\left\|\phi\right\|_{H^s},
    \end{aligned}
\end{equation}
for all $\phi \in H^s(\mathbb{R})$, $s\in \mathbb{R}$ and with
\begin{equation}
     \psi_{\alpha,\beta}(t)=\exp\left(\left(\frac{2\alpha}{\beta}\right)^{\frac{\alpha}{\beta-\alpha}}\frac{(\beta-\alpha)}{\beta} t\right), \qquad t\in \mathbb{R}.
\end{equation}
   
   To evaluate the action of the integral equation \eqref{inteq} on the $Y_T^s$ spaces, we require the following proposition.

\begin{prop}\label{aprop1.1}
Let $\beta >3/2$, $0< \alpha <\beta$ fixed and $\psi_{\alpha,\beta}$ given by \eqref{expsg}. 

\begin{itemize}
\item[(i)] For all $s \geq 0$ and $t>0$
\begin{equation}\label{aeq14.1}
    \left\||\xi|^s e^{(|\xi|^{\alpha}-|\xi|^{\beta})t}\right\|_{L^2}\lesssim \psi_{\alpha,\beta}(t) \, t^{-s/\beta-1/2\beta}.
\end{equation}
    \item[(ii)] Let $s \in \mathbb{R}$. Then for any $0<t\leq 1$ it follows that
\begin{equation}\label{ae10}
\left\||\xi|\langle \xi \rangle^{s}e^{(|\xi|^{\alpha}-|\xi|^{\beta})t}\right\|_{L^2} \lesssim \psi_{\alpha,\beta}(t)\,t^{-r/2}
\end{equation}
for all $r>\max\left\{(3+2s)/\beta,0\right\}$.
\end{itemize}
\end{prop}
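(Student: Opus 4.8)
The plan is to reduce both estimates to the computation of $L^2$ norms of pure dissipative weights of the form $|\xi|^{a}e^{-|\xi|^{\beta}t/2}$, for which an explicit scaling argument is available. The starting point is the pointwise bound \eqref{semigb} established in Proposition \ref{aprop1}, namely $|e^{(|\xi|^{\alpha}-|\xi|^{\beta})t}|\leq \psi_{\alpha,\beta}(t)\,e^{-|\xi|^{\beta}t/2}$, which lets me factor $\psi_{\alpha,\beta}(t)$ out in front of each norm and replace the full symbol $e^{(|\xi|^{\alpha}-|\xi|^{\beta})t}$ by the Gaussian-type weight $e^{-|\xi|^{\beta}t/2}$. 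After this reduction $\alpha$ disappears from the analysis and both statements become estimates for a single dissipative exponential.

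For part (i) I would simply perform the change of variables $w=t^{1/\beta}\xi$, under which
\begin{equation*}
\left\||\xi|^{s}e^{-|\xi|^{\beta}t/2}\right\|_{L^2}^{2}=t^{-(2s+1)/\beta}\int_{\mathbb{R}}|w|^{2s}e^{-|w|^{\beta}}\,dw .
\end{equation*}
Since $s\geq 0$ the integrand has no singularity at the origin and decays faster than any polynomial at infinity, so the integral is a finite constant depending only on $s$ and $\beta$. Taking square roots produces the factor $t^{-s/\beta-1/(2\beta)}$, which together with the extracted $\psi_{\alpha,\beta}(t)$ is exactly \eqref{aeq14.1}.

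For part (ii) the extra difficulty is that $s$ may be negative and that $\langle\xi\rangle^{s}$ rather than $|\xi|^{s}$ appears, so a single global rescaling is no longer clean. I would split the frequency domain into $|\xi|\leq 1$ and $|\xi|>1$. On the low-frequency region $\langle\xi\rangle^{s}\sim 1$ and $e^{-|\xi|^{\beta}t/2}\leq 1$, so the contribution is controlled by $\int_{|\xi|\leq 1}|\xi|^{2}\,d\xi\lesssim 1$, which is harmless since $0<t\leq 1$. On the high-frequency region $\langle\xi\rangle^{s}\sim|\xi|^{s}$, and after the same rescaling $w=t^{1/\beta}\xi$ the contribution equals $t^{-(3+2s)/\beta}$ times $\int_{|w|>t^{1/\beta}}|w|^{2+2s}e^{-|w|^{\beta}}\,dw$; I would then distinguish three regimes according to the sign of $3+2s$.

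When $3+2s>0$ the truncated integral converges to a finite constant as $t\to 0^{+}$, yielding the clean power $t^{-(3+2s)/(2\beta)}$ after taking square roots. When $3+2s<0$ the weight $|\xi|^{2+2s}$ is already integrable over $|\xi|>1$ without the exponential, so the high-frequency contribution is bounded by a $t$-independent constant. The borderline case $s=-3/2$, where $3+2s=0$, is the only genuinely delicate point: there the truncated integral behaves like $|\log t|$, producing a logarithmic loss. Because $|\log t|^{1/2}\lesssim t^{-r/2}$ for every $r>0$ when $0<t\leq 1$, this loss is absorbed precisely by allowing any $r>\max\{(3+2s)/\beta,0\}$ with strict inequality, and the same slack covers the other two regimes uniformly. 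Collecting the low- and high-frequency bounds and restoring $\psi_{\alpha,\beta}(t)$ gives \eqref{ae10}. The main obstacle is thus exactly this endpoint logarithmic behaviour, which dictates the form of the hypothesis on $r$; everything else is the routine scaling computation already used in part (i).
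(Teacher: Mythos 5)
Your proof is correct, and part (i) coincides with the paper's argument (extract $\psi_{\alpha,\beta}(t)$ via the pointwise bound \eqref{semigb}, then rescale $w=t^{1/\beta}\xi$). For part (ii) you use the same low/high frequency decomposition as the paper, but you handle the high-frequency tail differently: you rescale and then run a case analysis on the sign of $3+2s$, isolating the endpoint $s=-3/2$ where a logarithmic factor appears and absorbing it using the strict inequality $r>0$. The paper instead invokes the single elementary bound $t^{r}e^{-|\xi|^{\beta}t}\leq (r/|\xi|^{\beta})^{r}e^{-r}$, which converts the exponential into the polynomial weight $|\xi|^{-\beta r}$ and makes $\int_{|\xi|>1}|\xi|^{2+2s-\beta r}\,d\xi$ converge exactly under the stated condition $r>(3+2s)/\beta$; this treats all three of your regimes (including $s=-3/2$) in one stroke and explains where the hypothesis on $r$ comes from without any case distinction. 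Both routes are equally elementary and give the same conclusion; yours makes the sharp decay rate $t^{-(3+2s)/(2\beta)}$ visible when $3+2s>0$ and identifies precisely why the inequality on $r$ must be strict, while the paper's is shorter and uniform in $s$.
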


\begin{proof}
In view of inequality \eqref{semigb} and changing variables by $w=t^{1/\beta}\xi$, we find 
\begin{equation}\label{aeq16}
    \begin{aligned}
     \left\| |\xi|^s e^{(|\xi|^{\alpha}-|\xi|^{\beta})t}\right\|_{L^2}\leq \psi_{\alpha,\beta}(t)\left\||w|^{s}e^{-|w|^{\beta} /2}\right\|_{L^2}\lesssim  t^{-s/\beta-1/2\beta}.
    \end{aligned}
\end{equation}
This establish (i).
Next, we show \eqref{aeq16} when $0<t\leq 1$. We recall the inequality 
\begin{equation}\label{aeq9}
t^re^{- |\xi|^{\beta} t}\leq \left(\frac{r}{|\xi|^{\beta}}\right)^{r}e^{-r},
\end{equation}
which is valid for all $\xi \neq 0$ and $r>0$. Then, dividing in low and high frequencies, from \eqref{aeq9}, we deduce
\begin{equation*}
\begin{aligned}
\psi_{\alpha,\beta}(-2t)\left\||\xi|\langle \xi \rangle^{s}e^{(|\xi|^{\alpha}-|\xi|^{\beta})t}\right\|_{L^2}^2 \lesssim \int_{|\xi|\leq 1}|\xi|^2(1+|\xi|^2)^{s} \, d\xi+t^{-r}\int_{|\xi| > 1}|\xi|^{2+2s-\beta r} \, d\xi \lesssim t^{-r},
\end{aligned}
\end{equation*}
where $r> \max\left\{(3+2s)/\beta,0\right\}$. This completes the proof of \eqref{ae10}.
\end{proof}

Now we can estimate the integral equation \eqref{inteq} on the spaces $Y_T^s$.

\begin{prop}\label{aprop2} 
Let $0<\alpha < \beta$, $0<  T\leq 1$ and $\psi_{\alpha,\beta}$ defined by \eqref{expsg}.
\begin{itemize}
    \item[(i)] Let $s<0$ and $\phi\in H^s(\mathbb{R})$. Then
$$ \left\|S(t) \phi\right\|_{Y_T^s} \lesssim \psi_{\alpha,\beta}(T) \left\|\phi\right\|_{H^s}.$$
    \item[(ii)] Let $\beta>3/2$ and $s\geq 0$. Then for all $u,v\in C([0,T];H^s(\mathbb{R}))$ it follows
\begin{equation}
    \left\|\int_0^t S(t-\tau) \partial_x(u v)(\tau)\, d\tau \right\|_{L^{\infty}_TH^s_x} \lesssim  \psi_{\alpha,\beta}(T)\, T^{\frac{1}{2\beta}(2\beta-3)}  \left\|u \right\|_{L^{\infty}_TH^s_x} \left\|v \right\|_{L^{\infty}_TH^s_x}.
    \end{equation}
 \item[(iii)] Assume that $\beta > 3/2$ and $\max\left\{3/2-\beta,-\beta/2\right\}<s<0$. Given $u,v\in Y_T^s$ it follows
\begin{equation}
    \left\|\int_0^t S(t-\tau) \partial_x(u v)(\tau)\, d\tau \right\|_{Y_T^s} \lesssim \psi_{\alpha,\beta}(T)\, T^{\frac{1}{2\beta}(2\beta-r\beta+4s)}  \left\|u \right\|_{Y_T^s} \left\|v \right\|_{Y_T^s},
\end{equation}
for some $\max\left\{(3+2s)/\beta,0\right\}< r < 2(\beta+2s)/\beta$.
\end{itemize}
\end{prop}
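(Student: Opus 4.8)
The plan is to handle the three parts by a single mechanism: extract the Fourier multiplier of $S(t)$ (or of $S(t-\tau)\partial_x$) out of the relevant norm, bound its $L^2$-size by a negative power of the time variable via Proposition \ref{aprop1.1}, and reduce everything to a scalar time integral of Beta type. Throughout I use the pointwise bound \eqref{semigb} to factor out $\psi_{\alpha,\beta}$ and the monotonicity $\psi_{\alpha,\beta}(t)\le\psi_{\alpha,\beta}(T)$ for $t\le T$.

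For part (i) both terms in $\|S(t)\phi\|_{Y_T^s}$ come from Proposition \ref{aprop1}. The $H^s$-term is the case $\delta=0$ there. For the weighted $L^2$-term I apply Proposition \ref{aprop1} with the smoothing gain $\delta=|s|$, so that $\|S(t)\phi\|_{L^2}\lesssim \psi_{\alpha,\beta}(t)(1+t^{-|s|/\beta})\|\phi\|_{H^s}$; multiplying by $t^{|s|/\beta}$ and using $0<t\le T\le 1$ absorbs the singular factor and gives the claim.

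For parts (ii) and (iii) I first apply Minkowski's integral inequality, $\|\int_0^t F(\tau)\,d\tau\|\le\int_0^t\|F(\tau)\|\,d\tau$, to reduce matters to estimating $\|S(t-\tau)\partial_x(uv)(\tau)\|$ pointwise in $\tau$. Then I factor the multiplier out in $L^\infty$: writing $\langle\xi\rangle^s|\xi|e^{(|\xi|^{\alpha}-|\xi|^{\beta})(t-\tau)}\widehat{uv}$, I bound it by $\|\langle\xi\rangle^s|\xi|e^{(|\xi|^{\alpha}-|\xi|^{\beta})(t-\tau)}\|_{L^2}\|\widehat{uv}\|_{L^\infty}$, where $\|\widehat{uv}\|_{L^\infty}=\|\hat u*\hat v\|_{L^\infty}\le\|u\|_{L^2}\|v\|_{L^2}$. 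In part (ii), since $s\ge0$, I instead distribute the weight by Peetre's inequality $\langle\xi\rangle^s\lesssim\langle\xi_1\rangle^s+\langle\xi_2\rangle^s$ on the convolution to obtain $\|\langle\xi\rangle^s\widehat{uv}\|_{L^\infty}\lesssim\|u\|_{H^s}\|v\|_{H^s}$, keeping only $|\xi|e^{(|\xi|^{\alpha}-|\xi|^{\beta})(t-\tau)}$ in $L^2$. The $L^2$-size of the multiplier is then controlled by Proposition \ref{aprop1.1}: in (ii) by part (i) with exponent $1$, giving the factor $(t-\tau)^{-3/(2\beta)}$, and in (iii) by part (ii), giving $(t-\tau)^{-r/2}$.

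After inserting the $Y_T^s$-weights $\|u(\tau)\|_{L^2}\le\tau^{-|s|/\beta}\|u\|_{Y_T^s}$ (likewise for $v$) in part (iii), the problem collapses to the Beta integral $\int_0^t(t-\tau)^{-a}\tau^{-b}\,d\tau\sim t^{1-a-b}$, and this is where the main difficulty lies. The product of two low-regularity factors produces the weight $\tau^{-2|s|/\beta}$, whose integrability against the kernel singularity forces $2|s|/\beta<1$, i.e. $s>-\beta/2$; while demanding that the resulting exponent of $T$ be positive for some $r>\max\{(3+2s)/\beta,0\}$ forces $r<2(\beta+2s)/\beta$, a nonempty range precisely when $s>3/2-\beta$. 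Together these yield the restriction $s>\max\{3/2-\beta,-\beta/2\}$, and the $H^s$-contribution carries exactly the exponent $1-r/2-2|s|/\beta=\frac{1}{2\beta}(2\beta-r\beta+4s)$. For part (ii) the only constraint is $\beta>3/2$ and the exponent is $1-3/(2\beta)=\frac{1}{2\beta}(2\beta-3)$. Finally, to package (iii) into the single power $T^{\frac{1}{2\beta}(2\beta-r\beta+4s)}$, I would note that the weighted-$L^2$ contribution carries the exponent $1-3/(2\beta)-|s|/\beta$, which is $\ge$ the $H^s$-exponent exactly because $r>(3+2s)/\beta$; since $T\le1$ the larger exponent gives the smaller power, so both pieces are dominated by the stated one. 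Continuity in time of the Duhamel term follows as in Proposition \ref{aprop1} and Proposition \ref{propconti} and requires no new idea.
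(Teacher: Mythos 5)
Your proposal is correct and follows essentially the same route as the paper: Minkowski's inequality, factoring the multiplier $|\xi|\langle\xi\rangle^{s}e^{(|\xi|^{\alpha}-|\xi|^{\beta})(t-\tau)}$ out in $L^2$ against $\|\widehat{uv}\|_{L^{\infty}}\le\|u\|_{L^2}\|v\|_{L^2}$ (resp.\ the algebra-type bound $\|\langle\xi\rangle^{s}\widehat{uv}\|_{L^{\infty}}\lesssim\|u\|_{H^s}\|v\|_{H^s}$ for $s\ge0$), the time-decay bounds of Proposition \ref{aprop1.1}, and a Beta integral, with the same bookkeeping of the exponents and of the constraint $r>(3+2s)/\beta$ that lets the weighted-$L^2$ contribution be absorbed into the stated power of $T$. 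Your explicit use of $\delta=|s|$ in Proposition \ref{aprop1} for the $L^2$-piece of part (i) is exactly what the paper's terse remark implicitly requires.
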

\begin{proof}
Part (i) is a direct consequence of \eqref{aeq14} (with $\delta=0$) and using that $0\leq t \leq T\leq 1$. To deduce (ii), we consider $s\geq 0$ and the inequality
\begin{equation*}
    \left\|\langle\xi \rangle^{s}\mathcal{F}(u^2(\tau))(\xi)\right\|_{L^{\infty}}\lesssim \left\|u(\tau)\right\|_{H^s}^2\lesssim  \left\|u\right\|_{L^{\infty}_T H^s}^2.
\end{equation*}
Thus, in view of the above estimate and \eqref{aeq14.1},
\begin{equation}\label{aeq4}
    \begin{aligned}
     \left\|\int_0^t S(t-\tau) \partial_x(u v)(\tau)\, d\tau \right\|_{H^s} & \leq \int_0^t  \psi_{\alpha,\beta}(\tau)\left\||\xi|e^{(|\xi|^{\alpha}-|\xi|^{\beta})(t-\tau)}\right\|_{L^2} \left\|\langle \xi\rangle^s\mathcal{F}(uv(\tau))(\xi)\right\|_{L^{\infty}}\, d \tau \\
      & \lesssim \psi_{\alpha,\beta}(T) \left( \int_0^t \tau^{-\frac{3}{2\beta}} \, d \tau \right)\left\|u\right\|_{L_T^{\infty}H^s}\left\|v\right\|_{L_T^{\infty}H^s} \\
      & \lesssim \psi_{\alpha,\beta}(T) T^{\frac{1}{2\beta}(2\beta-3)} \, \left\|u\right\|_{L_T^{\infty}H^s}\left\|v\right\|_{L_T^{\infty}H^s}.
    \end{aligned}
\end{equation}
This shows (ii). To deduce (iii), we consider $s < 0$. The definition of the norm on the space $Y_T^s$ yields the following estimate
\begin{equation}\label{aeq1}
    \left\|u(\tau)v(\tau)\right\|_{L^1} \leq \frac{\left\|u\right\|_{Y_T^s}\left\|v\right\|_{Y_T^s}}{\tau^{\frac{2|s|}{\beta}}},
\end{equation}
so that in view of \eqref{ae10} and performing the change of variables $\sigma=\tau/t$, we deduce
\begin{equation}\label{aeq2}
    \begin{aligned}
      \left\|\int_0^t S(t-\tau) \partial_x(u v)(\tau)\, d\tau \right\|_{H^s} & \leq \int_0^t \psi_{\alpha,\beta}(\tau)\left\||\xi|\langle\xi \rangle^{s}e^{(|\xi|^{\alpha}-|\xi|^{\beta})(t-\tau)}\right\|_{L^2} \left\|u(\tau)v(\tau)\right\|_{L^1}\, d \tau \\
      & \leq \psi_{\alpha,\beta}(T) \int_0^t  \frac{\left\||\xi|\langle\xi \rangle^{s}e^{(|\xi|^{\alpha}-|\xi|^{\beta})(t-\tau)}\right\|_{L^2}}{\tau^{\frac{2|s|}{\beta}}}\, d \tau \left\|u\right\|_{Y_T^s}\left\|v\right\|_{Y_T^s} \\
      &\lesssim \psi_{\alpha,\beta}(T) \,T^{\frac{1}{2\beta}(2\beta-r\beta+4s)}\left(\int_0^1 (1-\sigma)^{-r/2}\sigma^{\frac{-2|s|}{\beta}}\,d \sigma \right) \left\|u\right\|_{Y_T^s}\left\|v\right\|_{Y_T^s},
    \end{aligned}
\end{equation}
where $r>\max\left\{(3+2s)/\beta,0\right\}$ and $0\leq t\leq T$. Arguing in a similar manner, we have for all $0\leq t \leq T$ that
\begin{equation}\label{aeq3}
    \begin{aligned}
      t^{\frac{|s|}{\beta}}\left\|\int_0^t S(t-\tau) \partial_x(u v)(\tau)\, d\tau \right\|_{L^2} & \leq t^{\frac{|s|}{\beta}} \int_0^t \psi_{\alpha,\beta}(\tau) \left\||\xi|e^{(|\xi|^{\alpha}-|\xi|^{\beta})(t-\tau)}\right\|_{L^2}\left\|u(\tau)v(\tau)\right\|_{L^1}\, d \tau \\
      & \lesssim \psi_{\alpha,\beta}(T)\,t^{\frac{|s|}{\beta}} \int_0^t \frac{\left\||\xi|e^{(|\xi|^{\alpha}-|\xi|^{\beta})(t-\tau)}\right\|_{L^2}}{\tau^{\frac{2|s|}{\beta}}}\, d \tau \, \left\|u\right\|_{Y_T^s}\left\|v\right\|_{Y_T^s} \\
      & \lesssim  \psi_{\alpha,\beta}(T) T^{\frac{1}{2\beta}(2\beta+2s-3)}\left(\int_0^1 (1-\sigma)^{-\frac{3}{2\beta}}\sigma^{\frac{-2|s|}{\beta}}\,d \sigma \right) \left\|u\right\|_{Y_T^s}\left\|v\right\|_{Y_T^s}.
    \end{aligned}
\end{equation}
Consequently the right-hand side of inequalities \eqref{aeq2} and \eqref{aeq3} impose the conditions
$$\beta>3/2, \hspace{0.2cm} \text{ and } \hspace{0.2cm} s> \max\left\{3/2-\beta,-\beta/2\right\},$$ 
with $\max\left\{(3+2s)/\beta,0\right\}<r<2(\beta+2s)/\beta$. This remark completes the proof of Proposition \ref{aprop2}.
\end{proof}

\begin{prop}\label{aprop3} 
Let $\beta > 3/2$, $0<\alpha < \beta$, $s>\max\left\{3/2-\beta,-\beta/2\right\}$ and $0< T \leq 1$. Then there exists $\delta=\delta(s,\beta)>0$ such that the application 
$$t\rightarrow \int_{0}^t S(t-\tau) \partial_x(u^2)(\tau)\ d\tau $$
is in $C\left([0,T];H^{s+\delta}(\mathbb{R})\right)$, for every $u\in Y_T^s$. 
\end{prop}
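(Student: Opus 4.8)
The plan is to prove two things: a quantitative gain-of-regularity estimate showing that the Duhamel term lands in $H^{s+\delta}$ for a suitable small $\delta>0$, and the continuity in time of the resulting map. Both rest on the pointwise bound \eqref{semigb} and on the dissipative smoothing estimate \eqref{ae10} of Proposition \ref{aprop1}, which quantify the gain of derivatives produced by the factor $e^{-|\xi|^{\beta}t/2}$. Throughout I write $F(t)=\int_0^t S(t-\tau)\partial_x(u^2)(\tau)\,d\tau$.

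For the estimate I would distinguish the cases $s\geq 0$ and $s<0$, following the bilinear argument in the proof of Proposition \ref{aprop2}. When $s\geq 0$ (so $Y_T^s=C([0,T];H^s)$), I would place $\langle\xi\rangle^s$ on $\mathcal{F}(u^2)$ via $\nor{\langle\xi\rangle^s\mathcal{F}(u^2)}{L^\infty}\lesssim \nor{u}{H^s}^2$ and put the surplus $|\xi|\langle\xi\rangle^{\delta}$ on the semigroup, which \eqref{ae10} (applied with $\delta$ in place of $s$) bounds by $\psi_{\alpha,\beta}(t-\tau)\,(t-\tau)^{-r/2}$ for any $r>(3+2\delta)/\beta$. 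When $s<0$ I would instead use the $L^1$ bilinear bound \eqref{aeq1} together with \eqref{ae10} applied with $s+\delta$ in place of $s$, so that the integrand is dominated by $\psi_{\alpha,\beta}(T)\,(t-\tau)^{-r/2}\tau^{-2|s|/\beta}$ with $r>(3+2(s+\delta))/\beta$. In either case the Duhamel integral converges and produces a positive power of $T$ exactly under the two conditions $r/2<1$ (resp. $r/2<1+2s/\beta$) and $2|s|/\beta<1$; the second is precisely the hypothesis $s>-\beta/2$, while the first leaves a nonempty window for $r$ after the shift $s\mapsto s+\delta$ as soon as $\delta<\beta+s-3/2$ (resp. $\delta<\beta-3/2$), which is positive thanks to $s>\max\{3/2-\beta,-\beta/2\}$ and $\beta>3/2$. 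Fixing such a $\delta=\delta(s,\beta)>0$ yields
\begin{equation*}
\nor{F(t)}{H^{s+\delta}}\lesssim \psi_{\alpha,\beta}(T)\,T^{\theta}\,\nor{u}{Y_T^s}^2
\end{equation*}
for some $\theta>0$, with an $L^1_\tau$ majorant that I will reuse below.

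For the continuity I would fix $0\leq t<t'\leq T$ and split
\begin{equation*}
F(t')-F(t)=\int_t^{t'} S(t'-\tau)\partial_x(u^2)(\tau)\,d\tau+\int_0^t (S(t'-t)-\mathrm{Id})\,S(t-\tau)\partial_x(u^2)(\tau)\,d\tau.
\end{equation*}
The first integral is estimated exactly as in the previous step but over $[t,t']$, so its $H^{s+\delta}$-norm tends to $0$ as $t'\to t$ by absolute continuity of a convergent integral. For the second integral I would invoke the strong continuity of $\{S(h)\}_{h\geq 0}$ on $H^{s+\delta}$ established in Proposition \ref{aprop1}, which gives $\nor{(S(t'-t)-\mathrm{Id})\,w}{H^{s+\delta}}\to 0$ as $t'\to t$ for each fixed $w\in H^{s+\delta}$, and then pass to the limit under the integral sign by dominated convergence, the dominating function being $(\psi_{\alpha,\beta}(T)+1)$ times the integrable majorant produced above. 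Continuity at $t=0$ is immediate since $F(0)=0$ and the first step gives $\nor{F(t)}{H^{s+\delta}}\lesssim T^{\theta}\to 0$.

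I expect the continuity step to be the main obstacle. The integrand is genuinely singular at $\tau=t$, where the smoothing factor behaves like $(t-\tau)^{-r/2}$, so one must justify the dominated-convergence passage uniformly across this singularity and check that the boundary contribution over $[t,t']$ is controlled by the same integrable majorant rather than by a term degenerating as $t'\to t$. The key point is that the estimate of the first step is integrable in $\tau$; this is precisely what makes both the majorant and the boundary term vanish in the limit, tightly coupling the two halves of the argument.
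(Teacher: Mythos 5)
Your argument is correct and is essentially the one the paper intends: the paper's proof of Proposition \ref{aprop3} simply defers to \cite[Proposition 4]{Pilod}, whose proof is precisely this two-step scheme --- a smoothing estimate with a small gain $\delta$ obtained by running the bilinear bounds of Proposition \ref{aprop2} with the weight shifted via \eqref{ae10}, followed by the splitting of $F(t')-F(t)$ into a boundary piece and a $(S(t'-t)-\mathrm{Id})$ piece handled by dominated convergence against the integrable majorant. The only slip is cosmetic: your two admissible ranges for $\delta$ are transposed (the case $s\geq 0$ needs $\delta<\beta-3/2$ and the case $s<0$ needs $\delta<\beta+s-3/2$), but both are positive under the hypotheses, so nothing is affected.
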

\begin{proof}
The proof is similar to that in \cite[Proposition 4]{Pilod}. 
\end{proof}

We are in condition to prove Theorem \ref{globalw}. 

\begin{proof}[Proof of Theorem \ref{globalw}]

We consider $\beta \geq 2$ and $0<\alpha <\beta$ fixed. Let $u_0 \in H^s(\mathbb{R})$ with $s>\max\left\{3/2-\beta,-\beta/2\right\}$, we define the integral map
$$\tilde{\Psi}(u)=S(t) u_0-\frac{1}{2}\int_{0}^t S(t-\tau) \partial_x(u^2(\tau)) \ d\tau,$$
for each $u \in Y_T^s$. By Proposition \ref{aprop2} there exists a positive constant $c=c(\beta,\alpha)$ such that 
\begin{align}
\left\|\tilde{\Psi}(u)\right\|_{Y_T^s} &\leq c\left(\left\|u_0\right\|_{H^s}+T^{g_{\beta}(s)}\left\|u\right\|_{Y_T^s}^2\right), \label{IN11} \\
\left\|\tilde{\Psi}(u)-\tilde{\Psi}(v)\right\|_{Y_T^s} & \leq c T^{g_{\beta}(s)}\left\|u-v\right\|_{Y_T^s}\left\|u+v\right\|_{Y_T^s}, \label{IN12}
\end{align}
for all $u, v \in Y_T^s$, $0<T\leq 1$ and $s>\max\left\{3/2-\beta,-\beta/2\right\}$. Here the map $g_{\alpha}(s)$ is defined according to Proposition \ref{aprop2}, i.e., $g_{\beta}(s)=\frac{1}{2\beta}(2\beta-3)$ for all $s\geq 0$, and $g_{\beta}(s)=\frac{1}{2\beta}(2\beta-r\beta+2s)$, when $\max\left\{3/2-\beta,-\beta/2\right\}<s<0$, for some fixed $r$ such that $\max\left\{(3+2s)/\beta,0\right\}<r<2(\beta+2s)/\beta$. 

We consider $R=2c\left\|u_0 \right\|_{H^s}$ and $0<T\leq \min \left\{1,\left(4c R \right)^{-\frac{1}{g_{\beta}(s)}} \right\}$. Then estimates \eqref{IN11} and \eqref{IN12} imply that $\tilde{\Psi}$ is a contraction on the complete metric space $\left\{u\in Y_T^s : \left\|u \right\|_{Y_T^s}\leq R \right\}$.  Therefore, the fixed-point Theorem implies the existence of a solution $u$ to the integral equation \eqref{inteq}. \\ \\
The continuity with respect to the initial data is deduced following the same arguments in \cite{PR1}. To verify uniqueness, we let $u,v \in Y_T^s$ solutions of equation \eqref{inteq} on the time interval $[0,T]$ with the same initial data $u_0\in H^s(\mathbb{R})$. Then, arguing as in the deduction of Proposition \ref{aprop2}, we have that there exists a constant $c=c(\alpha,\beta,s)$, such that for all $0<T_1\leq T_2\leq T$,
\begin{equation}\label{WP3}
\begin{aligned}
\left\|(u-v)\chi_{\left\{t\geq  T_1\right\}}\right\|_{Y_{T_2}^s}\leq c K (T_2-T_1)^{g_{\beta}(s)}\left\|u-v\right\|_{Y_{T_2}^s},
\end{aligned}
\end{equation}
where $K:=\left\|u\right\|_{Y_{T}^s}+\left\|v\right\|_{Y_{T}^s}$. Thus, taking $T_2 \in \bigl(0,(cK)^{-\frac{1}{g_{\beta}(s)}}\bigr)$ and $T_1=0$, we deduce from \eqref{WP3} that $u \equiv v$ on $[0,T_2]$. Therefore, in view of \eqref{WP3}, we can iterate this argument (a number of steps $\geq T/T_2$), until we extend the uniqueness result to the whole interval $[0,T]$.\\ 
\\
Following the arguments involved in the proof of Theorem \ref{globalwlowdis} together with Propositions \ref{aprop1} and \ref{aprop3}, one deduces that $ u\in C\left((0,T],H^{\infty}(\mathbb{R})\right)$ and the flow map data solution is smooth. Finally, GWP is obtained by the same reasoning in the proof of Theorem \ref{globalwlowdis}.
\end{proof}


\section{Ill-posedness result.}

In this section we prove Theorem \ref{illpos}. Let us suppose that there exists a time $T>0$ such that the Cauchy problem \eqref{fDBO} is locally well-posed in $H^s(\mathbb{R})$ on the interval $[0,T]$ and such that the flow-map data solution
$$\Phi : H^s(\mathbb{R})\longrightarrow C\left([0,T];H^s(\mathbb{R})\right), \hspace{0.5cm} u_0 \longmapsto u\left(t\right)$$  
is $C^k$ ($k=2$ and $k=3$) at the origin. Then, for each $u_0 \in H^s(\mathbb{R})$ we have that $\Phi(\cdot) u_0$ is a solution of the integral equation
$$\Phi(t)u_0=S(t)u_0-\frac{1}{2}\int_{0}^t S(t-\tau) \partial_x(\Phi(\tau) u_0)^2\, d\tau.$$
Since $\Phi(t)(0)=0$, it follows that
\begin{align*}
&u_1(t):= d_{0}\Phi(t)(u_0)=S(t) u_0,  \\
&u_2(t):=d^2_0\Phi(t)(u_0,u_0)=-\int_0^t S(t-\tau) \partial_x\left(u_1(\tau)u_1(\tau)\right)\, d\tau,  \\
&u_3(t):=d^3_0\Phi(t)(u_0,u_0,u_0)=-3\int_0^t S(t-\tau) \partial_x \left(u_1(\tau)u_2(\tau)\right)\, d\tau.  
\end{align*}
Assuming that the solution map is of class $C^k$, $k=2$ and $k=3$, we must have
\begin{equation}\label{IIP1}
\left\|u_k(t)\right\|_{H^s}\lesssim \left\|u_0 \right\|_{H^s}^k, \hspace{0.5cm} \forall u_0 \in H^s(\mathbb{R}).
\end{equation}
In the sequel we will prove that \eqref{IIP1} does not hold in general when $k=2$, assuming that $s<-\beta/2$, and when $k=3$ for $s<\min\left\{3/2-\beta,-\beta/4\right\}$. These results establish Theorem \ref{illpos}.

\subsection{\texorpdfstring{$C^2$}{Lg}-regularity.}

We divide our arguments according to parts (i) and (iii) in Theorem \ref{illpos}.

\underline{\bf Case dissipation $\beta\geq 1$}. Let $0<\alpha<\beta$ and $s<-\beta/2$ fixed. When $k=2$, we will show that \eqref{IIP1} fails for an appropriated function $u_0$. We define $u_0$ by its Fourier transform as follows
\begin{equation}\label{IIPR7}
\widehat{u_0}(\xi)=N^{-s}\omega^{-\frac{1}{2}}\left(\chi_{I_N}(\xi)+\chi_{I_N}(-\xi)\right),
\end{equation} 
where $N\gg 1$, $0<\omega \leq 1$ fixed and $I_N=[N,N+2\omega]$. A simple calculation shows that $\left\|u_0\right\|_{H^s} \sim 1$. Now, taking the Fourier transform in the space variable and changing the order of integration, it follows for all $\xi \in [-\omega/2,\omega/2]$ that
\begin{equation}\label{IIPR2}
    \begin{aligned}
    \widehat{u_2}(\xi,t) &\sim \xi e^{-i  |\xi|\xi t+(|\xi|^{\alpha}-|\xi|^{\beta})t}\int_{\mathbb{R}}\widehat{u_0}\left(\xi-\xi_1\right)\widehat{u_0}\left(\xi_1\right)\frac{e^{\sigma(\xi,\xi_1)t}-1}{\sigma(\xi,\xi_1)} \, d\xi_1  \\
&\sim N^{-2s}\omega^{-1} \xi \, e^{-i  |\xi|\xi t+(|\xi|^{\alpha}-|\xi|^{\beta})t}\, \int_{K_{\xi}}\frac{e^{\sigma(\xi,\xi_1)t}-1}{\sigma(\xi,\xi_1)} \, d\xi_1,
    \end{aligned}
\end{equation}
where
\begin{align*} 
K_{\xi}=\left\{\xi_1 \, : \, \xi-\xi_1\in I_{N}, \, \xi_1\in -I_N \right\}\cup \left\{\xi_1 \, : \,  \xi_1\in I_N, \, \xi-\xi_1\in -I_{N} \right\}
\end{align*}
and $\sigma$ is defined by
\begin{equation}\label{ipeq}
\begin{aligned}
 \sigma(\xi,\xi_1)&=i(|\xi|\xi-|\xi-\xi_1|(\xi-\xi_1)-|\xi_1|\xi_1) \\
&\hspace{0.5cm}+\left(-|\xi|^{\alpha}+|\xi|^{\beta}+|\xi-\xi_1|^{\alpha}-|\xi-\xi_1|^{\beta}+|\xi_1|^{\alpha}-|\xi_1|^{\beta}\right).
\end{aligned}
\end{equation}
Now, if $\xi \in [-\omega/2,\omega/2]$ and $\xi_1 \in K_{\xi}$, we claim
\begin{equation}\label{ipeq1}
    |\sigma(\xi,\xi_1)| \sim N^{\beta}.
\end{equation}
Indeed, simple computations show
$$|\Im{\sigma(\xi,\xi_1)}|\lesssim w N.$$
On the other hand, when $\xi \in [-\omega/2,\omega/2]$ and $\xi_1 \in K_{\xi}$ we observe
\begin{equation*}
    \begin{aligned}
     \Re{\sigma(\xi,\xi_1)}\leq |2\omega|^{\beta}+2(N+2\omega)^{\alpha}-2N^{\beta}
    \end{aligned}
\end{equation*}
and 
\begin{equation*}
    \begin{aligned}
     \Re{\sigma(\xi,\xi_1)} \geq -|2\omega|^{\alpha}+2N^{\alpha}-2(N+2\omega)^{\beta}.
    \end{aligned}
\end{equation*}
In this manner, claim \eqref{ipeq1} follows for $N$ large after collecting the above inequalities. 

Therefore, taking a fixed time $t_N=N^{-\beta-\epsilon}\in (0,T)$, $\epsilon>0$ small (but arbitrary), it follows from the Taylor expansion of the exponential function and \eqref{ipeq1} that
\begin{equation}\label{IIPR8}
\left|\frac{e^{\sigma(\xi,\xi_1)t_N}-1}{\sigma(\xi,\xi_1)}\right|= \frac{1}{N^{\beta+\epsilon}}+O\left(N^{-\beta-2\epsilon}\right).
\end{equation}
Then, since $|K_\xi|\gtrsim \omega$, \eqref{IIPR8} yields
\begin{align*}
|\widehat{u_2}(\xi,t_N)|\chi_{[-\omega/2,\omega/2]} & \gtrsim N^{-2s-\beta-\epsilon}e^{-|w/2|^{\beta}N^{-\beta-\epsilon}}|\xi|\chi_{[-\omega/2,\omega/2]}(\xi), 
\\ & \gtrsim N^{-2s-\beta-\epsilon}|\xi|\chi_{[-\omega/2,\omega/2]}(\xi), 
\end{align*}
where we used that for fixed $\omega$, taking $N$ large, $e^{-|w/2|^{\beta}N^{-\beta-\epsilon}}\gtrsim 1$. Thus, we get a lower bound for the $H^s(\mathbb{R})$-norm of $u_2(x,t_N)$,
\begin{equation}
\left\|u_2(t_N)\right\|_{H^s}^2 \gtrsim  \int_{-\omega/2}^{\omega/2} \langle\xi \rangle^{2s} |\xi|^2 N^{-4s-2\beta-2\epsilon} \, d\xi  \gtrsim N^{-4s-2\beta-2\epsilon}.
\end{equation}
The above inequality contradicts \eqref{IIP1} ($k=2$) for $N$ large enough, since $s<-\beta/2$ and $\left\|u_0\right\|_{H^s}\sim 1$. 
\ \\
\underline{ \bf Case dissipation $0<\beta< 1$}.
Let $0<\alpha<\beta$ and $s\in \mathbb{R}$. We define $u_0$ by its Fourier transform 
\begin{equation}\label{IIPR7a}
\widehat{u_0}(\xi)=\omega^{-\frac{1}{2}}  \chi_{I_1}(\xi)+ \omega^{-\frac{1}{2}}N^{-s}\chi_{I_2}(\xi),
\end{equation} 
with $I_1=[\omega/2, \omega ]$, $I_2=[N, N+\omega]$ and $N\gg 1$, $\omega \ll N$ to be chosen later. Then $\left\|u_0\right\|_{H^s}\sim 1$. Computing the Fourier transform of $u_2(t)$ leads to
\begin{equation*}
    \widehat{u_2}(\xi,t) \sim \xi e^{-i  |\xi|\xi t+(|\xi|^{\alpha}-|\xi|^{\beta})t}\int_{\mathbb{R}}\widehat{u_0}\left(\xi-\xi_1\right)\widehat{u_0}\left(\xi_1\right)\frac{e^{\sigma(\xi,\xi_1)t}-1}{\sigma(\xi,\xi_1)} \, d\xi_1 , 
\end{equation*}
where $\sigma(\xi,\xi_1)$ was defined in \eqref{ipeq}. By support considerations, we have $\left\|u_2(t)\right\|_{H^s} \geq \left\| v_2(t)\right\|_{H^s}$ where
\begin{equation*}
    \widehat{v_2}(\xi,t) \sim N^{-s}\omega^{-1}\xi e^{-i  |\xi|\xi t+(|\xi|^{\alpha}-|\xi|^{\beta})t}\int_{K_{\xi}} \frac{e^{\sigma(\xi,\xi_1)t}-1}{\sigma(\xi,\xi_1)} \, d\xi_1  
\end{equation*}
and 
\begin{equation*}
K_{\xi}=\left\{\xi_1 \, : \, \xi_1\in I_{1}, \, \xi-\xi_1\in I_2 \right\}\cup \left\{\xi_1 \, : \,  \xi_1\in I_2, \, \xi-\xi_1\in I_{1} \right\}.
\end{equation*}
We see that if $\xi_1 \in K_{\xi}$, then $\xi \in [N+\omega/2, N+2\omega]$ and 
\begin{equation*}
    \begin{aligned}
    | \Re{\sigma(\xi,\xi_1)}|&\lesssim N^{\beta}     \\
     \Im{\sigma(\xi,\xi_1)}&=2\xi_1(\xi_1-\xi) \sim \omega N.
    \end{aligned}
\end{equation*}
We deduce for $\omega =N^{\beta-1}\ll N$ that $|\sigma (\xi, \xi_1)|\sim N^{\beta}$. Consider $t_N=(N+2\beta)^{-\beta -\epsilon}\sim N^{-\beta-\epsilon}$. By a Taylor expansion of the exponential function, 
\begin{equation*}
\begin{aligned}
\frac{e^{\sigma(\xi,\xi_1)t}-1}{\sigma(\xi,\xi_1)}=t_N+R(t_N, \xi, \xi_1) 
\end{aligned}
\end{equation*}
and
\begin{equation*}
\begin{aligned}
|R(t_N, \xi, \xi_1)|\lesssim \sum\limits_{k\geq 2}\frac{t_N^k |\sigma(\xi,\xi_1)|^{k-1}}{k!}\lesssim N^{-\beta-2\epsilon}.
\end{aligned}
\end{equation*}
Since $|K_\xi|\sim \omega$, we have that
\begin{align*}
|\widehat{v_2(t_N)}(\xi)|&\gtrsim N^{-s+1}\omega^{-1}e^{-(N+2\omega)^{-\epsilon}}\omega N^{-\beta-\epsilon}\chi_{[N+\omega/2, N+2\omega]}(\xi) \\
&\gtrsim N^{-s+1-\beta -\epsilon}\chi_{[N+\omega/2, N+2\omega]}(\xi).
\end{align*}
Then, the lower bound for the $H^s$-norm of $u_2(x, t_N)$
\begin{equation*}
\left\|u_2(t_N)\right\|_{H^s}\gtrsim N^{-s+1-\beta -\epsilon}\Bigl(\int_{N+\omega/2}^{N+2\omega} (1+|\xi|^2)^s\,d\xi\Bigr)^{1/2}\sim N^{1-\beta-\epsilon}\omega^{1/2}\sim N^{(1-\beta)/2 -\epsilon}.
\end{equation*}
This inequality contradicts \eqref{IIP1} ($k=2$) for $N$ large enough, $\epsilon \ll 1$ and $\beta<1$.

\subsection{\texorpdfstring{$C^3$}{Lg}-regularity.}

Suppose that $s<\min\left\{3/2-\beta,-\beta/4\right\}$. When $k=3$, we will show that \eqref{IIP1} fails for an appropriated function $u_0$. We consider $u_0$ as in \eqref{IIPR7}, i.e.,
\begin{equation*}
\widehat{u_0}(\xi)=N^{-s}\omega^{-1/2}\left(\chi_{I_N}(\xi)+\chi_{I_N}(-\xi)\right),
\end{equation*}
$N\gg 1$, $I_N=[N,N+2\omega]$ and here $\omega \ll N$ to be chosen later. Observe that $\left\|u_0\right\|_{H^s}\sim 1$.
Computing the Fourier transform of $u_3(t)$ for each $\xi \in \mathbb{R}$ one gets
\begin{equation}\label{IIPR1}
\widehat{u_3(t)}(\xi)\sim \xi \, \int_{0}^t e^{-i|\xi|\xi (t-\tau)+(|\xi|^{\alpha}-|\xi|^{\beta})(t-\tau)} \widehat{u_1(\tau)} \ast \widehat{u_2(\tau)}(\xi) \,  d\tau.
\end{equation}
Therefore, in view of \eqref{IIPR2} and Fubini's Theorem one finds
\begin{equation}\label{IIPR5}
\begin{aligned}
\widehat{u_3(t)}(\xi)\sim & \, \xi  e^{-i  |\xi|\xi t+(|\xi|^{\alpha}-|\xi|^{\beta})t} \int_{\mathbb{R}^2} \widehat{u_0}(\xi_1)\widehat{u_0}(\xi_2-\xi_1)\widehat{u_0}(\xi-\xi_2)\, \frac{ \xi_2}{\sigma(\xi_2,\xi_1)}  \\
& \hspace{1cm} \times \left(\frac{e^{\eta(\xi,\xi_1,\xi_2)t}-1}{\eta(\xi,\xi_1,\xi_2)} -\frac{e^{\sigma(\xi,\xi_2)t}-1}{\sigma(\xi,\xi_2)}\right) \, d\xi_1 \, d\xi_2,
\end{aligned}
\end{equation}
where $\sigma(\xi,\xi_1)$ is given by \eqref{ipeq1} and we have set
$$\eta(\xi,\xi_1,\xi_2):=\sigma(\xi,\xi_2)+\sigma(\xi_2,\xi_1).$$
By support considerations,
\begin{equation}\label{IIPR3}
|\widehat{u_3(t)}(\xi) | \gtrsim  N^{-3s}\omega^{-3/2}\left|\, \xi e^{(|\xi|^{\alpha}-|\xi|^{\beta})t} \int_{K_{\xi}} \frac{ \xi_2}{\sigma(\xi_2,\xi_1)} \left(\frac{e^{\eta(\xi,\xi_1,\xi_2)t}-1}{\eta(\xi,\xi_1,\xi_2)} -\frac{e^{\sigma(\xi,\xi_2)t}-1}{\sigma(\xi,\xi_2)}\right) \, d\xi_1 \, d\xi_2\right|,
\end{equation}
where $K_{\xi}=K_{\xi}^1\cup K_{\xi}^2 \cup K_{\xi}^3$ and
\begin{align*}
K_{\xi}^1=\left\{(\xi_1,\xi_2):\xi_1\in I_N, \, \xi_2-\xi_1\in I_N, \, \xi-\xi_2\in -I_N \right\}, \\
K_{\xi}^2=\left\{(\xi_1,\xi_2):\xi_1\in I_N, \, \xi_2-\xi_1\in -I_N, \, \xi-\xi_2\in I_N \right\}, \\
K_{\xi}^3=\left\{(\xi_1,\xi_2):\xi_1\in -I_N, \, \xi_2-\xi_1\in I_N, \, \xi-\xi_2\in I_N \right\}. 
\end{align*}
We will restrict the values of $\xi$ to the interval $ [N+3\omega,N+4\omega]$. Then, under this condition and using that $\omega \ll N$, it follows for $(\xi_1,\xi_2)\in K_{\xi}$ that
\begin{equation}\label{phasesti}
    \begin{aligned}
    \Im{\eta(\xi,\xi_1,\xi_2)} \sim \omega^2,\,\text{ and }\, |\Re{\eta(\xi,\xi_1,\xi_2)}|\sim N^{\beta}.
    \end{aligned}
\end{equation}
We will divide our arguments in two cases subcases depending on the dissipation parameter $\beta$.

\underline{ \bf Case dissipation $1\\
\leq \beta < 2$}.  In view of \eqref{phasesti}, we are led to choose $\omega=N^{\beta/2} \ll N$ with $N\gg 1$. Hence $|\eta(\xi,\xi_1,\xi_2)|\sim N^{\beta}$ and since $1\leq \beta <2$, 
\begin{equation}\label{phaseesti1}
    \left|\frac{\xi_2}{\sigma(\xi_2,\xi_1)}\right| \sim N^{-1}.
\end{equation}
Next, we consider
\begin{equation}\label{Comtrtime}
    t_N:=N^{-\beta-\epsilon}, 
\end{equation}
with $0<\epsilon \ll 1$. We divide the estimate of $|\widehat{u}_3(t_N)|(\xi)$ on $[N+3\omega,N+4\omega]$ as follows
\begin{equation*}
    \begin{aligned}
     |\widehat{u_3(t_N)}(\xi) |\chi_{[N+3\omega,N+4\omega]}(\xi) &\gtrsim  N^{-3s+1}\omega^{-3/2}\left| \int_{K_{\xi}} \frac{ \xi_2}{\sigma(\xi_2,\xi_1)} \left(\frac{e^{\eta(\xi,\xi_1,\xi_2)t_N}-1}{\eta(\xi,\xi_1,\xi_2)}\right) \, d\xi_1 \, d\xi_2\right|\chi_{[N+3\omega,N+4\omega]}(\xi) \\
     &\hspace{0.2cm}-N^{-3s+1}\omega^{-3/2}\left| \int_{K_{\xi}} \frac{ \xi_2}{\sigma(\xi_2,\xi_1)} \left(\frac{e^{\sigma(\xi,\xi_2)t_N}-1}{\sigma(\xi,\xi_2)}\right) \, d\xi_1 \, d\xi_2\right|\chi_{[N+3\omega,N+4\omega]}(\xi) \\
     &=B_1-B_2.
    \end{aligned}
\end{equation*}
To estimate $B_1$, we observe
\begin{equation*}
   \frac{e^{\eta(\xi,\xi_1,\xi_2)t_N}-1}{\eta(\xi,\xi_1,\xi_2)}=t_N+O(N^{-\beta-2\epsilon}).
\end{equation*}
This yields in view of \eqref{phaseesti1} and $|K_{\xi}|\sim \omega^2$ to
\begin{equation}
    \begin{aligned}
     B_1 &\gtrsim N^{-3s+1}\omega^{-3/2}N^{-1}\omega^2N^{-\beta-\epsilon}\chi_{[N+3\omega,N+4\omega]}(\xi) \\
     &\sim N^{-3s-3\beta/4-\epsilon}\chi_{[N+3\omega,N+4\omega]}(\xi).
    \end{aligned}
\end{equation}
To deal with $B_2$, we observe $|\sigma(\xi,\xi_2)|\gtrsim \omega N$ for $(\xi,\xi_2)\in K_{\xi}$ and $\xi\in[N+3\omega,N+4\omega]$. Thus, since $|\sigma(\xi,\xi_2)t_N| \lesssim 1$, we get
\begin{equation}
    \begin{aligned}
     B_2 &\lesssim N^{-3s+1}\omega^{-3/2}N^{-2}\omega \chi_{[N+3\omega,N+4\omega]}(\xi) \\
     &\sim N^{-3s-1-\beta/4}\chi_{[N+3\omega,N+4\omega]}(\xi).
    \end{aligned}
\end{equation}
Since $-3s-\beta/4-1< -3s-3\beta/4-\epsilon$ given that $\beta<2$, we conclude 
$$B_1-B_2 \gtrsim N^{-3s-3\beta/4-\epsilon}\chi_{[N+3\omega,N+4\omega]}(\xi),$$
for $N \gg 1$. Thus, from this fact we derive the following lower bound for the $H^s$-norm of $u_3(x,t_N)$, 
\begin{equation*}
    \left\|u_3(t_N)\right\|_{H^s}\gtrsim N^{-3s-3\beta/4-\epsilon}\omega^{1/2}N^s\sim N^{-2s-\beta/2-\epsilon}.
\end{equation*}
The above inequality contradicts \eqref{IIP1} ($k=3$) for $N$ large given that $\left\|u_0\right\|_{H^s}\sim 1$, $s<-\beta/4$ and $0<\epsilon \ll 1$.

\underline{ \bf Case dissipation $\beta \geq 2$}. In this case the contributions of $B_1$ and $B_2$ are equivalent and so we require of a different estimate to bound \eqref{IIPR3}. Let $\omega=\epsilon_1 N$ with $0<\epsilon_1 \ll 1$ to be chosen later. We first observe that \eqref{phasesti} shows that $\eta(\xi,\xi_1,\xi_2)\sim N^{\beta}$. Moreover, we claim
\begin{align}
\left|\sigma \left(\xi,\xi_2\right)\right|\sim N^{\beta}, \label{aeq10} \\
\left|\sigma\left(\xi_2,\xi_1\right)\right|\sim N^{\beta}, \label{aeq11} 
\end{align}
for $\epsilon_1>0$ small enough, $N$ sufficiently large and $(\xi_1,\xi_2)\in K_{\xi}$ with $\xi\in [N+3\omega,N+4\omega]$. For the sake of brevity, we will only give a proof to \eqref{aeq10}, since \eqref{aeq11} follows in a similar manner.  On these terms, since the imaginary part of $\sigma(\xi,\xi_2)$ is of order $O(N^2)$, we are reduced to show that 
$|\Re{\sigma(\xi,\xi_1)}|\sim N^{\beta}$.

Suppose that $(\xi_1,\xi_2)\in K_{\xi}^1$. So, since $2N \leq \xi_2 \leq 2N+4\omega$, we deduce
\begin{equation*}
    \begin{aligned}
     \Re{\sigma(\xi,\xi_2)} \leq  N^{\alpha}((2+4\epsilon_1)^{\alpha}+(1+2\epsilon_1)^{\alpha}-(1+3\epsilon_1)^{\alpha})-N^{\beta}((2+3\epsilon_1)^{\beta}+1-(1+4\epsilon_1)^{\beta}),
    \end{aligned}
\end{equation*}
and
\begin{equation*}
    \begin{aligned}
     \Re{\sigma(\xi,\xi_2)} \geq   N^{\alpha}((2+3\epsilon_1)^{\alpha}+1-(1+4\epsilon_1)^{\alpha})-N^{\beta}((2+4\epsilon_1)^{\beta}+(1+2\epsilon_1)^{\beta}-(1+3\epsilon_1)^{\beta}),
    \end{aligned}
\end{equation*}
which clearly leads to $|\Re{\sigma(\xi,\xi_1)}|\sim N^{\beta}$ for $\epsilon_1$ small enough.

On the other hand, when $(\xi_1,\xi_2)\in K_{\xi}^2\cup K_{\xi}^3$, we have $\omega \leq \xi_2 \leq 2\omega$ and it follows 

\begin{equation*}
    \begin{aligned}
     \Re{\sigma(\xi,\xi_2)} \leq -N^{\alpha}((1+3\epsilon_1)^{\alpha}-(1+2\epsilon_1)^{\alpha}-(2\epsilon_1)^{\alpha})+N^{\beta}((1+4\epsilon_1)^{\beta}-1-\epsilon_1^{\beta})
    \end{aligned}
\end{equation*}
and
\begin{equation*}
    \begin{aligned}
     \Re{\sigma(\xi,\xi_2)} \geq   -N^{\alpha}((1+4\epsilon_1)^{\alpha}-1-\epsilon_1^{\alpha})+N^{\beta}((1+3\epsilon_1)^{\beta}-(1+2\epsilon_1)^{\beta}-(2\epsilon_1)^{\beta}).
    \end{aligned}
\end{equation*}
Note that in this case the constants with factor $N^{\beta}$ tend to zero as $\epsilon_1 \to 0$ and they are always positive if $\epsilon_1>0$. To see this, the mean value inequality yields
\begin{equation}
    \begin{aligned}
   (1+4\epsilon_1)^{\beta}-1-\epsilon_1^{\beta} &\geq (1+3\epsilon_1)^{\beta}-(1+2\epsilon_1)^{\beta}-(2\epsilon_1)^{\beta} \\
   &\geq \beta \epsilon_1 \left((1+2\epsilon_1)^{\beta-1}-\frac{2}{\beta}(2\epsilon_1)^{\beta-1}\right) >0,
    \end{aligned}
\end{equation}
given that $\beta \geq 2$ and $\epsilon_1>0$.

Therefore, gathering the above estimates, we can choose $\epsilon_1>0$ small to fix the sign of the constants involving $N^{\beta}$. Consequently, we take $N$ large to absorb the terms with $N^{\alpha}$. At the end, we will find that $|\Re{\sigma(\xi,\xi_2 )}|\sim N^{\beta}$ as claimed.

Next, we consider $t_N$ as in \eqref{Comtrtime} with $0<\epsilon \ll 1$. By the Taylor expansion of the exponential function we find
\begin{equation}\label{IIPR4}
\frac{1}{\sigma(\xi_2,\xi_1)}\left(\frac{e^{\eta(\xi,\xi_1,\xi_2)t_N}-1}{\eta(\xi,\xi_1,\xi_2)} -\frac{e^{\sigma(\xi,\xi_2)t_N}-1}{\sigma(\xi,\xi_2)}\right)=\frac{1}{N^{2\beta+2\epsilon}}+R(\xi,\xi_1,\xi_z),
\end{equation}
where
$$\left|R(\xi,\xi_1,\xi_z)\right|\leq \sum_{k=3}^{\infty}\left| \frac{\eta(\xi,\xi_1,\xi_2)^{k-1}-\sigma(\xi,\xi_2)^{k-1}}{\sigma(\xi_2,\xi_1)\, k!}\right|t_{N}^k\leq O\left(\frac{1}{N^{2\beta+3\epsilon}}\right).$$

In view of the above inequality the main contribution of \eqref{IIPR4} is given by $N^{-2\beta-2\epsilon}$. Thus, for $N$ large,
\begin{equation}\label{IIPR6}
\Re\left(\frac{1}{\sigma(\xi_2,\xi_1)}\left(\frac{e^{\eta(\xi,\xi_1,\xi_2)t_N}-1}{\eta(\xi,\xi_1,\xi_2)} -\frac{e^{\sigma(\xi,\xi_2)t_N}-1}{\sigma(\xi,\xi_2)}\right)\right) \gtrsim  N^{-2\beta-2\epsilon}.
\end{equation} 
Since $\xi_2 \sim N$ and $|K_{\xi}|\sim \omega^2$, we get from \eqref{IIPR6} that
\begin{align*}
&\left|\widehat{u_3(t_{N})}(\xi)\right| \\
&\gtrsim N^{-3s}\omega^{-3/2}|\xi| e^{(|\xi|^{\alpha}-|\xi|^{\beta})t_{N}} \left|\int_{K_{\xi}} \frac{ \xi_2}{\sigma(\xi_2,\xi_1)} \left(\frac{e^{\eta(\xi,\xi_1,\xi_2)t}-1}{\eta(\xi,\xi_1,\xi_2)} -\frac{e^{\sigma(\xi,\xi_2)t}-1}{\sigma(\xi,\xi_2)}\right) \, d\xi_1 \, d\xi_2 \right| \\
& \gtrsim N^{-3s}\omega^{-3/2}N \left|\Re\left( \int_{K_{\xi}} \frac{ \xi_2}{\sigma(\xi_2,\xi_1)} \left(\frac{e^{\eta(\xi,\xi_1,\xi_2)t}-1}{\eta(\xi,\xi_1,\xi_2)} -\frac{e^{\sigma(\xi,\xi_2)t}-1}{\sigma(\xi,\xi_2)}\right) \, d\xi_1 \, d\xi_2\right) \right| \\
& \gtrsim N^{-3s+2}\omega^{-3/2}N^{-2\beta-2\epsilon} \omega^2. 
\end{align*}
The above inequality gives the following lower bound for the $H^s$-norm of $u_3$,
\begin{align*}
\left\|u_{3}(t_N)\right\|_{H^s}^2 & \geq \int_{\mathbb{R}}\langle\xi \rangle^{2s} \left|\widehat{u_3(t_N)}(\xi)\right|^2 \chi_{[N+3\omega,N+4\omega]}(\xi)\, d\xi \\
&\gtrsim \omega^2 N^{-4s+4-4\beta-4\epsilon} \\
&\sim N^{-4s+6-4\beta-4\epsilon},
\end{align*}
which in turn contradicts \eqref{IIP1} for $N$ large given that $\left\|u_0\right\|_{H^s}\sim 1$, $s<3/2-\beta$ and $\epsilon>0$ is arbitrary.


\section{Proof of Proposition \ref{contInsta}}

This section is aimed to establish Proposition \ref{contInsta} in which we consider the limit behavior of the family \eqref{fDBO} when $\alpha \to \beta^{-}$. We first introduce some notation and preliminaries. Since our arguments are based on energy estimates, we require the following commutator relation deduced by Kato and Ponce in \cite{KP}.
\begin{lemma}\label{conmKP}
If $s>0$ and $1<p<\infty$, then
\begin{equation}
    \left\|[J^s,f]g\right\|_{L^p(\mathbb{R})} \lesssim  \left\|\partial_x f\right\|_{L^{\infty}(\mathbb{R})} \left\|J^{s-1}g\right\|_{L^p(\mathbb{R})}+ \left\|J^s f\right\|_{L^p(\mathbb{R})} \left\|g\right\|_{L^{\infty}(\mathbb{R})},
\end{equation}
where
$$[J^s,f]g=J^s(fg)-fJ^sg.$$
\end{lemma}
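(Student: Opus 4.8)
\emph{Strategy.} The estimate is a statement about how $J^s=\langle D\rangle^s$ fails to commute with multiplication, so the natural tool is a Littlewood--Paley (Bony) paraproduct decomposition that isolates the single piece carrying genuine cancellation. Let $\{\Delta_j\}_{j\geq -1}$ be an inhomogeneous dyadic frequency decomposition, with $\Delta_j$ localizing to $\{|\xi|\sim 2^j\}$ for $j\geq 0$, and let $S_j=\sum_{k<j}\Delta_k$. Decomposing the product,
\begin{equation*}
fg=T_fg+T_gf+R(f,g),\qquad T_fg=\sum_j S_{j-1}f\,\Delta_j g,\qquad R(f,g)=\sum_{|j-k|\leq 1}\Delta_j f\,\Delta_k g,
\end{equation*}
and likewise for $f\,J^sg$, I would regroup the commutator as
\begin{equation*}
[J^s,f]g=\bigl(J^s(T_fg)-T_f(J^sg)\bigr)+\bigl(J^s(T_gf)+J^sR(f,g)\bigr)-\bigl(T_{J^sg}f+R(f,J^sg)\bigr).
\end{equation*}
The first bracket is the only term in which $f$ is supported at frequencies much lower than $g$, and precisely there the commutator gains a derivative; the remaining brackets carry no cancellation and will produce the $\left\|J^sf\right\|_{L^p}\left\|g\right\|_{L^\infty}$ contribution.

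\emph{The low--high term.} On a single block $S_{j-1}f\,\Delta_jg$ the output frequency is $\sim 2^j$, and the operator $u\mapsto J^s(S_{j-1}f\,\Delta_jg)-S_{j-1}f\,J^s(\Delta_jg)$ is a bilinear Fourier multiplier with symbol $\langle\xi+\eta\rangle^s-\langle\xi\rangle^s$, where $\xi$ denotes the frequency of $g$ (so $|\xi|\sim 2^j$) and $\eta$ that of $f$ (so $|\eta|\ll 2^j$). A first--order Taylor expansion in $\eta$ gives
\begin{equation*}
\left|\langle\xi+\eta\rangle^s-\langle\xi\rangle^s\right|\lesssim |\eta|\,\langle\xi\rangle^{s-1}\sim |\eta|\,2^{j(s-1)},
\end{equation*}
so that, modulo a symbol of Coifman--Meyer type, one factor $|\eta|$ falls on $f$ as a derivative and a factor $\langle\xi\rangle^{s-1}$ falls on $g$. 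Since $S_{j-1}$ is bounded on $L^\infty$, summing the dyadic pieces and applying the Coifman--Meyer theorem together with the Littlewood--Paley square function then yields
\begin{equation*}
\left\|J^s(T_fg)-T_f(J^sg)\right\|_{L^p}\lesssim \left\|\partial_x f\right\|_{L^\infty}\left\|J^{s-1}g\right\|_{L^p},
\end{equation*}
the first term on the right of the claimed bound.

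\emph{The remaining terms.} In $J^s(T_gf)$ and in the resonant piece $J^sR(f,g)$ the factor $f$ carries the top frequency $\sim 2^j$, so $J^s$ may be distributed onto $f$ while $g$ is measured in $L^\infty$; in $T_{J^sg}f$ and $R(f,J^sg)$ the operator $J^s$ lands on a low--frequency copy of $g$, where its symbol is $O(1)$, and the $L^p$ norm of the high--frequency factor $f$ is absorbed into $\left\|J^sf\right\|_{L^p}$ using that $J^{-s}$ is bounded on $L^p$ for $s>0$ and $1<p<\infty$. Estimating each of these through H\"older, the square--function characterization of $L^p$, and the Fefferman--Stein vector--valued maximal inequality (here the hypothesis $1<p<\infty$ is used), every such contribution is at most $\left\|J^sf\right\|_{L^p}\left\|g\right\|_{L^\infty}$, which completes the desired inequality.

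\emph{Main obstacle.} The technical heart is the low--high term: the elementary Taylor bound above must be upgraded to the statement that, after extracting $|\eta|\langle\xi\rangle^{s-1}$, the residual kernel is a genuine Coifman--Meyer multiplier uniformly in the dyadic index $j$, so that the associated bilinear operator maps $L^\infty\times L^p\to L^p$ and the dyadic sum reassembles cleanly into $\left\|\partial_x f\right\|_{L^\infty}\left\|J^{s-1}g\right\|_{L^p}$ without logarithmic loss. Verifying this uniform multiplier bound, rather than the pointwise symbol estimate itself, is where the real work lies.
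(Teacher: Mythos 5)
The paper does not prove this lemma at all: it is quoted verbatim from Kato and Ponce \cite{KP} and used as a black box, so there is no in-paper argument to compare yours against. Your proposal is a self-contained proof sketch along the standard Littlewood--Paley/paraproduct route, which is a genuinely different (and by now more common) path than the original Kato--Ponce argument, which proceeds via a representation of the commutator and the Coifman--Meyer bilinear/commutator theorems rather than an explicit Bony decomposition. Your decomposition and the treatment of the low--high paraproduct are correct, and you rightly identify the uniform Coifman--Meyer bound for the symbol $(\langle\xi+\eta\rangle^s-\langle\xi\rangle^s)/(i\eta\,\langle\xi\rangle^{s-1})$ on the region $|\eta|\ll|\xi|$ as the technical core. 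Two points deserve tightening. First, in the term $T_{J^sg}f$ the claim that the symbol of $J^s$ restricted to the low-frequency factor is $O(1)$ is not accurate: $\left\|S_{j-1}J^sg\right\|_{L^\infty}\lesssim 2^{js}\left\|g\right\|_{L^\infty}$, and it is precisely this factor $2^{js}$ that must be traded against $\left\|\Delta_jf\right\|_{L^p}\sim 2^{-js}\left\|\Delta_jJ^sf\right\|_{L^p}$ before the square-function summation; your conclusion is right but the stated reason is not. Second, you never say where the hypothesis $s>0$ enters: it is needed to sum the resonant pieces $J^sR(f,g)$ and $R(f,J^sg)$, whose dyadic blocks at frequency $2^j$ contribute to all output frequencies $2^m\lesssim 2^j$, so the geometric series over $j\geq m$ converges only because $s>0$. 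With those two clarifications the sketch is a legitimate proof outline; what it buys over the paper's approach is simply that it is self-contained, at the cost of reproving a classical result.
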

As it was stated in Proposition \ref{contInsta}, for fixed $\beta>1$ and $u_0\in H^{s_0}(\mathbb{R})$, we denote by $u^{\alpha}$ the solutions of the IVP \eqref{fDBO} with growth order $0<\alpha\leq\beta$, dissipation order $\beta$ and initial datum $u_{\alpha}(0)=u_0$, that is,
\begin{equation}\label{fDBOPA}
\left\{
\begin{aligned}
&u_t^{\alpha}+\mathcal{H}u_{xx}^{\alpha}-(D_x^{\alpha}-D_x^{\beta})u^{\alpha}+u^{\alpha}u_x^{\alpha}=0, \qquad && x\in \mathbb{R}  \, , \quad t > 0,  \\
&u^{\alpha}(x,0)=u_0(x), 
\end{aligned}
\right.
\end{equation}
In particular $u^{\beta}$ denotes the solution of the Benjamin-Ono equation. Regarding existence of solutions, since $s_0>3/2$, when $0<\alpha<\beta$, the results in Theorem \ref{globalwlowdis} if $1<\beta<2$ or in Theorem \ref{globalw} if $\beta\geq 2$ establish that there exists 
\begin{equation}\label{ExistSOl}
    u^{\alpha}\in C([0,\infty);H^{s_0}(\mathbb{R}))
\end{equation}
solution of \eqref{fDBOPA}. This same conclusion holds when $\alpha=\beta$ due to the GWP theory established in  \cite{Iorio,Ponce1991}. For the sake of brevity, in what follows we may assume that $u^{\beta}$ is sufficiently regular to perform all of the subsequent estimates. Indeed, this remark can be justified approximating $u_0$ by smooth functions, using the continuous dependence of the solution flow associated to BO and taking the limit in our arguments. Notice that in contrast Theorem \ref{globalwlowdis} and \ref{globalw} assure that $u^{\alpha}(t)$ is smooth whenever $0<\alpha<\beta$ and $t>0$.  
\\ \\
We are in condition to prove the first part of Proposition \ref{contInsta} in which we establish some point-wise bound for solutions of \eqref{fDBOPA}.

\begin{lemma}\label{existT}
Let $\beta>1$, $u_0\in H^{s_0}(\mathbb{R})$ with $s_0>3/2$. For each $\alpha\in(0,\beta]$, let $u^{\alpha}\in  C([0,\infty);H^{s_0}(\mathbb{R}))$  be the corresponding solutions of \eqref{fDBOPA} with initial data $u_0$. Then there exist $T>0$ and a function $g\in C([0,T];[0,\infty))$ such that
\begin{equation*}
\left\|u^{\alpha}(t)\right\|_{H^{s_0}}\leq g(t), \hspace{0.5cm} t\in [0,T].
\end{equation*}
\end{lemma}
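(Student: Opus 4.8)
The plan is to derive an $H^{s_0}$ differential inequality for $t\mapsto\|u^{\alpha}(t)\|_{H^{s_0}}$ whose constants are \emph{independent} of $\alpha\in(0,\beta]$, and then to dominate $\|u^{\alpha}(t)\|_{H^{s_0}}$ by the solution of the associated comparison ODE on a common time interval. First I would perform the energy estimate: applying $J^{s_0}$ to \eqref{fDBOPA}, pairing with $J^{s_0}u^{\alpha}$ in $L^2$, and using that $\mathcal{H}\partial_{xx}$ is skew-adjoint (so its contribution vanishes), I obtain
\begin{equation*}
\frac{1}{2}\frac{d}{dt}\|u^{\alpha}(t)\|_{H^{s_0}}^2 = \int_{\mathbb{R}}(|\xi|^{\alpha}-|\xi|^{\beta})\langle\xi\rangle^{2s_0}|\widehat{u^{\alpha}}(\xi,t)|^2\,d\xi - \langle J^{s_0}(u^{\alpha}u_x^{\alpha}),J^{s_0}u^{\alpha}\rangle_{L^2}.
\end{equation*}

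The decisive point is that the growth-dissipation symbol is bounded above uniformly in $\alpha$: the function $f(r)=r^{\alpha}-r^{\beta}$ attains its maximum at $r_*=(\alpha/\beta)^{1/(\beta-\alpha)}$ with $f(r_*)=(\alpha/\beta)^{\alpha/(\beta-\alpha)}(1-\alpha/\beta)$, and since $\alpha\mapsto f(r_*)$ is continuous on $(0,\beta]$ with finite limits at both endpoints, the constant $M_{\beta}:=\sup_{\alpha\in(0,\beta]}\sup_{\xi\in\mathbb{R}}(|\xi|^{\alpha}-|\xi|^{\beta})$ is finite. Hence the first term is bounded by $M_{\beta}\|u^{\alpha}(t)\|_{H^{s_0}}^2$, with $M_{\beta}$ independent of $\alpha$; in particular the strongly dissipative behaviour at high frequencies is automatically absorbed into this single upper bound.

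For the nonlinear term I would split $J^{s_0}(u^{\alpha}u_x^{\alpha})=u^{\alpha}J^{s_0}u_x^{\alpha}+[J^{s_0},u^{\alpha}]u_x^{\alpha}$. Integration by parts gives $\langle u^{\alpha}J^{s_0}u_x^{\alpha},J^{s_0}u^{\alpha}\rangle_{L^2}=-\tfrac12\int u_x^{\alpha}(J^{s_0}u^{\alpha})^2\lesssim\|u_x^{\alpha}\|_{L^\infty}\|u^{\alpha}\|_{H^{s_0}}^2$, while the commutator estimate of Lemma \ref{conmKP} yields $\|[J^{s_0},u^{\alpha}]u_x^{\alpha}\|_{L^2}\lesssim\|u_x^{\alpha}\|_{L^\infty}\|u^{\alpha}\|_{H^{s_0}}$. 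Since $s_0>3/2$, the embedding $H^{s_0-1}(\mathbb{R})\hookrightarrow L^\infty(\mathbb{R})$ gives $\|u_x^{\alpha}\|_{L^\infty}\lesssim\|u^{\alpha}\|_{H^{s_0}}$, so the whole nonlinear contribution is $\lesssim\|u^{\alpha}\|_{H^{s_0}}^3$. Collecting the terms and writing $h(t)=\|u^{\alpha}(t)\|_{H^{s_0}}$, I arrive at $h'(t)\le M_{\beta}h(t)+Ch(t)^2$ with $M_{\beta}$ and $C$ independent of $\alpha$.

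Finally, let $g$ be the solution of the Bernoulli comparison problem $g'=M_{\beta}g+Cg^2$ with $g(0)=\|u_0\|_{H^{s_0}}$; this $g$ is continuous on a maximal interval $[0,T^*)$ whose length depends only on $M_{\beta}$, $C$ and $\|u_0\|_{H^{s_0}}$. Choosing any $T\in(0,T^*)$ and applying the comparison principle for differential inequalities gives $\|u^{\alpha}(t)\|_{H^{s_0}}\le g(t)$ on $[0,T]$ for every $\alpha\in(0,\beta]$, which is the assertion. The main obstacle is not the estimate itself but guaranteeing that every constant is uniform in $\alpha$ (precisely what the finiteness of $M_{\beta}$ secures) and justifying the formal energy identity at the $H^{s_0}$ level; the latter is handled by running the argument on the smooth approximations provided by the regularity remarks preceding the lemma (Theorems \ref{globalwlowdis} and \ref{globalw} for $0<\alpha<\beta$, and the Benjamin-Ono theory for $\alpha=\beta$) and passing to the limit.
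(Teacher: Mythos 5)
Your proposal is correct and follows essentially the same route as the paper: the same $H^{s_0}$ energy identity, the same uniform-in-$\alpha$ upper bound on the symbol $|\xi|^{\alpha}-|\xi|^{\beta}$ coming from its maximum at $(\alpha/\beta)^{1/(\beta-\alpha)}$, the Kato--Ponce commutator estimate together with $H^{s_0-1}\hookrightarrow L^{\infty}$ for the nonlinearity, and the resulting Bernoulli-type differential inequality. The only cosmetic difference is that the paper integrates the Bernoulli inequality explicitly (via the substitution $h(t)=\|u^{\alpha}(t)\|_{H^{s_0}}^2e^{-ct}$) to produce a closed-form $g$, whereas you invoke the comparison principle for the associated ODE; both yield the same uniform bound on a common interval $[0,T]$.
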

\begin{proof}
Applying the operator $J^{s_0}$ to \eqref{fDBOPA} and multiplying the resulting expression by $J^{s_0}u^{\alpha}$ yields 
\begin{equation}\label{wpeq1}
  \frac{d}{dt}\left\|u^{\alpha}(t)\right\|_{H^{s_0}}^2=\int (D_x^{\alpha}-D_x^{\beta})J^{s_0}u^{\alpha}(t) J^{s_0} u^{\alpha}(t) \, dx -\int J^{s_0}(u^{\alpha}(t)\partial_xu^{\alpha}(t))J^{s_0}u^{\alpha}(t) \, dx
\end{equation}
In view of Plancherel's identity
\begin{equation}\label{wleq0}
\begin{aligned}
    \int (D_x^{\alpha}-D_x^{\beta})J^{s_0}u^{\alpha}(t) J^{s_0} u^{\alpha}(t)\, dx &\leq \int_{|\xi|\leq 1} (|\xi|^{\alpha}-|\xi|^{\beta})|\langle \xi \rangle^{s_0}\widehat{u^{\alpha}}(\xi,t)|^{2} \, d\xi \leq 2 \left\|u^{\alpha}(t)\right\|_{H^{s_0}}^2.
  \end{aligned}  
\end{equation}
From the above expression, Lemma \ref{conmKP} to deal with second term on the right-hand side of \eqref{wpeq1} and Sobolev's embedding $\left\|u^{\alpha}_x\right\|_{L^\infty}\lesssim \left\|u^{\alpha}\right\|_{H^{s_0}}$, there exists a constant $c>0$ independent of $\alpha$ such that
\begin{equation}\label{wleq1}
    \frac{d}{dt}\left\|u^{\alpha}(t)\right\|_{H^{s_0}}^2 \leq c \big( \left\|u^{\alpha}(t)\right\|_{H^{s_0}}^2+\left\|u^{\alpha}(t)\right\|_{H^{s_0}}^3\big).
\end{equation}
Letting $h(t)=\left\|u^{\alpha}(t)\right\|_{H^{s_0}}^2e^{-ct}$, \eqref{wleq1} yields the differential inequality
\begin{equation}
    \begin{aligned}
    -\frac{d}{dt}\left(\frac{1}{h(t)^{1/2}}\right)\leq \frac{c}{2}e^{ct/2}, \hspace{0.5cm} h(0)=\left\|u_0\right\|_{H^{s_0}}^2.
    \end{aligned}
\end{equation}
Integrating the above expression between $0$ and $t$ we find 
\begin{equation}
    \frac{1}{h(0)^{1/2}}-\frac{1}{h(t)^{1/2}} \leq \big(e^{ct/2}-1\big)
\end{equation}
and so replacing $h(t)$ by $\left\|u^{\alpha}(t)\right\|_{H^{s_0}}^2e^{-ct}$ and solving for $\left\|u^{\alpha}(t)\right\|_{H^{s_0}}$ we arrive at
\begin{equation}
    \left\|u^{\alpha}(t)\right\|_{H^{s_0}} \leq \frac{\left\|u_0\right\|_{H^{s_0}}e^{ct/2}}{1-\left\|u_0\right\|_{H^{s_0}}(e^{ct/2}-1)}=:g(t).
\end{equation}
Consequently the above inequality concludes the proof of the lemma after taking any fixed time $T$ in the interval $(0,\frac{2}{c}\log\big(\frac{1+\left\|u_0\right\|_{H^{s_0}}}{\left\|u_0\right\|_{H^{s_0}}}\big)\big)$.
\end{proof}

Next we prove \eqref{ContMap}. Here we assume that $s_0>3/2+\max\left\{\beta/2,1\right\}$ and $s<s_0-\max\left\{\beta/2,1\right\}$. By continuity of the Sobolev embedding, it is sufficient to establish \eqref{ContMap} when $3/2<s<s_0-\max\left\{\beta/2,1\right\}$. In this manner, by Lemma \ref{existT}, there exist $T>0$, $M>0$ such that
\begin{equation}\label{boundUNIF}
    \sup_{0<\alpha\leq \beta}\sup_{t\in[0,T]}\left\|u^{\alpha}(t)\right\|_{H^{s_0}}  \leq M,
\end{equation}
where $u^{\alpha}\in C([0,T];H^{s_0}(\mathbb{R}))$. Let us first establish continuity at the left-hand side of $\beta$. Let $0<\alpha<\beta$ and define $w=w(\alpha,\beta)=u^{\alpha}-u^{\beta}$, we find that $w$ solves
\begin{equation}\label{enerE1}
\left\{
    \begin{aligned}
    &w_t+\mathcal{H}w_{xx}-(D_x^{\alpha}-D_x^{\beta})w-(D_x^{\alpha}-D_x^{\beta})u^{\beta}+wu^{\alpha}_x+u^{\beta}w_x=0, \hspace{0.5cm} x\in \mathbb{R}, \, 0<t<T, \\
    &w(0)=0.
    \end{aligned} \right.
\end{equation}
After applying $J^{s}$ to the equation in \eqref{enerE1} and multiplying the resulting expression by $J^sw$ it is deduced 
\begin{equation}\label{diffineq}
\begin{aligned}
 \frac{1}{2}\frac{d}{dt}\left\|w(t)\right\|_{H^{s}}^2=&\int (D_x^{\alpha}-D_x^{\beta})J^sw J^sw \, dx +\int (D_x^{\alpha}-D_x^{\beta})J^su^{\beta} J^sw\, dx \\
 &-\int J^s(wu^{\alpha}_x)J^sw\, dx-\int J^s(u^{\beta} w_x)J^sw\, dx \\
 =: &\mathcal{I}_{\alpha}+\mathcal{II}_{\alpha}+\mathcal{III}_{\alpha}+\mathcal{IV}_{\alpha}.
\end{aligned}
\end{equation}
Let us estimate the contribution of each term on the right-hand side of the above equation. By Plancherel's identity and \eqref{boundUNIF}, we find
\begin{equation}\label{EstimateI}
\begin{aligned}
    \mathcal{I}_{\alpha}\leq \int_{|\xi|\leq 1} (|\xi|^{\alpha}-|\xi|^{\beta}\big)|\widehat{J^sw}(\xi,t)|^2\, d\xi &\leq \max_{0<|\xi|\leq 1}\big\{|\xi|^{\alpha}-|\xi|^{\beta}\big\}\left\|w(t)\right\|_{H^{s_0}}^2 \\
    &\lesssim\big( \frac{\alpha}{\beta}\big)^{\alpha/(\beta-\alpha)}\left(\frac{\beta-\alpha}{\beta}\right)M^2,
\end{aligned}
\end{equation}
where we have used that the function $f(x)=x^{\alpha}-x^{\beta}$ on $[0,1]$ reaches its maximum at $x_{max}=\left(\alpha/\beta\right)^{1/(\beta-\alpha)}$. Using that $|\xi|^{\alpha/2}+|\xi|^{\beta/2}\lesssim \langle \xi \rangle^{\beta/2}$ and that $s_0>s+\beta/2$, 
\begin{equation}\label{EstimateII}
\begin{aligned}
 |\mathcal{II}_{\alpha} |&=\big|\int (|\xi|^{\alpha/2}-|\xi|^{\beta/2})\widehat{J^s u^{\beta}}\overline{(|\xi|^{\alpha/2}+|\xi|^{\beta/2})\widehat{J^s w}} \, dx \big| \\
 &\lesssim \left\|(|\xi|^{\alpha/2}-|\xi|^{\beta/2})\widehat{J^su^{\beta}}\right\|_{L^2}\left\|w\right\|_{H^{s+\beta/2}} \lesssim M\left\|(|\xi|^{\alpha/2}-|\xi|^{\beta/2})\widehat{J^su^{\beta}}\right\|_{L^2}.
\end{aligned}
\end{equation}
Writing
\begin{equation*}
    \begin{aligned}
    \mathcal{III}_{\alpha}=-\int [J^s,w]u^{\alpha}_x J^s w\, dx-\int wJ^su_x^{\alpha} J^sw \, dx,
    \end{aligned}
\end{equation*}
we can use the above display, Lemma \ref{conmKP}, Sobolev embedding  $\left\|w_x\right\|_{L^{\infty}} \lesssim \left\|w\right\|_{H^{3/2^{+}}}$, the fact that $s_0>s+1$, $s>3/2$ and \eqref{boundUNIF} to infer 
\begin{equation}\label{EstimateIII}
    \begin{aligned}
    |\mathcal{III}_{\alpha}| &\lesssim \left\|w_x\right\|_{L^{\infty}}\left\|u^{\alpha}\right\|_{H^{s}}\left\|w\right\|_{H^{s}}+\left\|u^{\alpha}_x\right\|_{L^{\infty}}\left\|w\right\|_{H^s}^2+\left\|w\right\|_{L^{\infty}}\left\|u^{\alpha}\right\|_{H^{s+1}}\left\|w\right\|_{H^s}\\
    &\lesssim M\left\|w\right\|_{H^{s}}^2.
    \end{aligned}
\end{equation}
Integration by parts shows
\begin{equation*}
    \begin{aligned}
    \mathcal{IV}_{\alpha}&=-\int [J^s,u^{\beta}]w_x J^s w\, dx-\int u^{\beta}J^sw_x J^sw \, dx =-\int [J^s,u^{\beta}]w_x J^s w\, dx+\frac{1}{2}\int u_x^{\beta} \,|J^sw|^2 \, dx,
    \end{aligned}
\end{equation*}
which together with Lemma \ref{conmKP} and \eqref{boundUNIF} imply
\begin{equation}\label{EstimateIV}
    |\mathcal{IV}_{\alpha}| \lesssim \left\|u_x^{\beta}\right\|_{L^{\infty}}\left\|w\right\|_{H^s}^2+\left\|u^{\beta}\right\|_{H^s}\left\| w_x\right\|_{L^{\infty}}\left\|w\right\|_{H^s} \lesssim M\left\|w\right\|_{H^s}^2.
\end{equation}
Plugging \eqref{EstimateI}, \eqref{EstimateII}, \eqref{EstimateIII} and \eqref{EstimateIV} on the right-hand side of \eqref{diffineq}, there exists a constant $c=c(M)$ depending on $M$ as in \eqref{boundUNIF} and independent of $\alpha$ such that
\begin{equation*}
    \frac{d}{dt}\left\|w(t)\right\|_{H^s}^2\leq c\big( \frac{\alpha}{\beta}\big)^{\alpha/(\beta-\alpha)}\left(\frac{\beta-\alpha}{\beta}\right)+c\left\|(|\xi|^{\alpha/2}-|\xi|^{\beta/2})\widehat{J^su^{\beta}}(t)\right\|_{L^2}+c\left\|w(t)\right\|_{L^2}^2.
\end{equation*}
or equivalently
\begin{equation*}
    \frac{d}{dt}\big(\left\|w(t)\right\|_{H^s}^2e^{-ct}\big)\leq c\big( \frac{\alpha}{\beta}\big)^{\alpha/(\beta-\alpha)}\left(\frac{\beta-\alpha}{\beta}\right)e^{-ct}+c\left\|(|\xi|^{\alpha/2}-|\xi|^{\beta/2})\widehat{J^su^{\beta}}(t)\right\|_{L^2}e^{-ct}.
\end{equation*}
Integrating the last inequality between $0$ and $t$ and recalling that $w(0)=u^{\alpha}(0)-u^{\beta}(0)=0$, we get
\begin{equation}\label{approxiconcl}
    \left\|u^{\alpha}(t)-u^{\beta}(t)\right\|_{H^{s}}^2\leq  \big( \frac{\alpha}{\beta}\big)^{\alpha/(\beta-\alpha)}\left(\frac{\beta-\alpha}{\beta}\right)e^{cT}+ce^{cT}\int_0^T\left\|(|\xi|^{\alpha/2}-|\xi|^{\beta/2})\widehat{J^su^{\beta}}(\tau)\right\|_{L^2}\, d \tau.
\end{equation}
Therefore, to obtain continuity as $\alpha \to \beta^{-}$, the right-hand side of \eqref{approxiconcl}  reduces our considerations to establish
\begin{equation}\label{limitcont}
    \int_0^T\left\|(|\xi|^{\alpha/2}-|\xi|^{\beta/2})\widehat{J^su^{\beta}}(\tau)\right\|_{L^2}\, d \tau \to 0 \hspace{0.5cm} \text{as } \alpha\to \beta.
\end{equation}
Using again that $||\xi|^{\alpha/2}-|\xi|^{\beta/2}|\lesssim \langle \xi \rangle^{\beta/2}$,
\begin{equation}\label{limitcont1}
    \left|(|\xi|^{\alpha}-|\xi|^{\beta})\widehat{J^su^{\beta}}(\xi,\tau)\right|^2 \lesssim
 \left|\widehat{J^{s+\beta/2}u^{\beta}}(\xi,\tau)\right|^2
 \end{equation}
 and so given that $s_0>s+\beta/2$,
 \begin{equation}\label{limitcont2}
     \left\|(|\xi|^{\alpha/2}-|\xi|^{\beta/2})\widehat{J^su^{\beta}}(\tau)\right\|_{L^2} \lesssim \sup_{t\in[0,T]}\left\|u^{\beta}(t)\right\|_{H^{s+\beta/2}} \lesssim M.
 \end{equation}
In this manner, since point-wise $|\xi|^{\alpha}-|\xi|^{\beta}\to 0$ as $\alpha\to \beta^{-}$, \eqref{limitcont1} and Lebesgue dominated convergence theorem yield $\left\|(|\xi|^{\alpha/2}-|\xi|^{\beta/2})\widehat{J^su}(\tau)\right\|_{L^2} \to 0$, $\tau \in (0,T)$. This conclusion, \eqref{limitcont2} and Lebesgue dominated convergence theorem (on the time variable $\tau$) imply \eqref{limitcont}. From this, we get continuity as $\alpha \to \beta^{-}$ for the function $\alpha \in (0,\beta) \mapsto u^{\alpha}\in C([0,T];H^{s}(\mathbb{R}))$. A similar reasoning as above, using that $||\xi|^{\alpha/2}-|\xi|^{\alpha'/2}|\lesssim \langle \xi \rangle^{\beta/2}$ whenever $\alpha,\alpha'\in (0,\beta]$ establishes continuity when $\alpha\in(\alpha,\beta)$. The proof of \eqref{ContMap} is now completed. 


\section{fDBO on the Torus}

In this section we briefly indicate the modifications needed to prove Theorem \ref{periodiccase}. The periodic Sobolev spaces $H^{s}(\mathbb{T})$ are endowed with the norm 
\begin{align*}
 \left\|\phi\right\|_{H^s}^2=\sum_{k\in \mathbb{Z}} \langle k \rangle^{2s}|\widehat{\phi}(k)|^2.
\end{align*}
Let $s\in \mathbb{R}$ and $0<t\leq T\leq 1$ fixed. We consider the spaces
    $$\widetilde{Y}^s_T=\left\{u\in C([0,T];H^s(\mathbb{T})): \, \left\|u\right\|_{\tilde{Y}_T^s}<\infty \right\},$$
where
\begin{equation} \label{adaptyt}
    \left\|u\right\|_{\tilde{Y}_T^s}:= \sup_{t\in(0,T]} \left(\left\|u(t)\right\|_{H^s(\mathbb{T})}+t^{|s|/\beta}\left\|u(t)\right\|_{L^2(\mathbb{T})}\right).
\end{equation}
Note that when $s\geq 0$, $\tilde{Y}_T^s=C([0,T];H^s(\mathbb{T}))$ and $\left\|u\right\|_{\tilde{Y}_T^s}\sim \left\|u\right\|_{L^{\infty}_TH^s_x}$, since $0<T\leq 1$.

Comparing with the real line case, here the semigroup $\left\{S(t)\right\}_{t\geq 0}$ on $H^s(\mathbb{T})$ is contractive which clearly follows from the fact that $|k|^{\alpha}-|k|^{\beta} \leq 0$ for all integer $k$ and so $e^{(|k|^{\alpha}-|k|^{\beta})t} \leq 1$ for all $t\geq 0$.

On the other hand, since the proof of Proposition \ref{aprop1} and \ref{aprop2} depends on some change of variables, we must proceed with a bit more care. 

\begin{prop}
The results in Proposition \ref{aprop1} and those in Proposition \ref{aprop1.1} setting $0<t \leq 1$ are still valid in the periodic case. 
\end{prop}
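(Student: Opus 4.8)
The plan is to observe that the only place where the continuum structure of $\mathbb{R}$ enters the proofs of Propositions \ref{aprop1} and \ref{aprop1.1} is on the Fourier side: an $L^\infty_\xi$ bound on a multiplier in the first case, and the rescaling $w=t^{1/\beta}\xi$ inside an $L^2_\xi$ integral in the second. In the periodic setting the frequency ranges over $\mathbb{Z}$, so $\|\cdot\|_{L^2_\xi}$ is replaced by $\|\cdot\|_{\ell^2(\mathbb{Z})}$ and $\|\cdot\|_{L^\infty_\xi}$ by $\sup_{k\in\mathbb{Z}}$. The pointwise inequality \eqref{semigb}, $|e^{(|\xi|^{\alpha}-|\xi|^{\beta})t}|\leq\psi_{\alpha,\beta}(t)e^{-|\xi|^{\beta}t/2}$, holds at every real argument and in particular at each integer $k$, so it remains the basic tool; I note also that on $\mathbb{T}$ one has $|k|^{\alpha}-|k|^{\beta}\leq 0$, so $\psi_{\alpha,\beta}$ could even be dropped.

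For Proposition \ref{aprop1} the transcription is immediate. Since the dispersive symbol $e^{-i|k|kt}$ has modulus one, $\|S(t)\phi\|_{H^{s+\delta}}\leq\big(\sup_{k\in\mathbb{Z}}\langle k\rangle^{\delta}e^{(|k|^{\alpha}-|k|^{\beta})t}\big)\|\phi\|_{H^s}$, and $\sup_{k\in\mathbb{Z}}\langle k\rangle^{\delta}e^{(|k|^{\alpha}-|k|^{\beta})t}\leq\big\|\langle\xi\rangle^{\delta}e^{(|\xi|^{\alpha}-|\xi|^{\beta})t}\big\|_{L^\infty_\xi}$, which is exactly the quantity already bounded by $\psi_{\alpha,\beta}(t)(1+t^{-\delta/\beta})$ in the proof of Proposition \ref{aprop1}. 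The continuity of $t\mapsto S(t)\phi$ in $H^{s+\delta}(\mathbb{T})$ then follows from this uniform bound together with dominated convergence applied to the $\ell^2(\mathbb{Z})$ sum, exactly as on the line.

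The genuine work is Proposition \ref{aprop1.1}, where the norms are $\ell^2(\mathbb{Z})$ sums that can no longer be rescaled. For part (i), \eqref{semigb} reduces matters to estimating $\sum_{k\in\mathbb{Z}}|k|^{2s}e^{-|k|^{\beta}t}$, and I would compare this sum with its integral: the function $g(\xi)=\xi^{2s}e^{-\xi^{\beta}t}$ is unimodal on $[0,\infty)$, so splitting at its maximum and using monotonicity on each side gives $\sum_{k\geq1}g(k)\leq\int_0^\infty g(\xi)\,d\xi+\sup_{\xi\geq0}g(\xi)$. The integral equals a constant times $t^{-(2s+1)/\beta}$, while the peak value has size $t^{-2s/\beta}$, which is $\leq t^{-(2s+1)/\beta}$ precisely because $0<t\leq1$; taking square roots yields the claimed $t^{-s/\beta-1/(2\beta)}$. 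For part (ii) I would split $\sum_{k\in\mathbb{Z}}|k|^{2}\langle k\rangle^{2s}e^{-|k|^{\beta}t}$ into the finitely many frequencies $|k|\leq1$, whose contribution is $O(1)$, and the range $|k|\geq2$, on which inequality \eqref{aeq9} gives $e^{-|k|^{\beta}t}\lesssim t^{-r}|k|^{-\beta r}$; the remaining sum $\sum_{|k|\geq2}|k|^{2+2s-\beta r}$ converges under exactly the condition $r>\max\{(3+2s)/\beta,0\}$ that ensured integrability on $\mathbb{R}$, producing the bound $\psi_{\alpha,\beta}(t)\,t^{-r/2}$.

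The main obstacle is the sum-integral comparison in part (i): unlike on the line, there is no scaling symmetry, so one must control the discrete sum directly and verify that the frequencies near the peak $|k|\sim t^{-1/\beta}$ do not contribute more than the integral. This is where the restriction $0<t\leq1$ is essential, since it guarantees $t^{-2s/\beta}\leq t^{-(2s+1)/\beta}$ so that the peak term is absorbed. Everything else is a direct transcription of the line estimates with integrals replaced by sums.
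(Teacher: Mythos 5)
Your proposal is correct and follows essentially the same route as the paper: reduce the $\ell^2(\mathbb{Z})$ sums to the corresponding real-line integrals by a sum--integral comparison and absorb the leftover bounded contribution (the low-frequency/peak terms) using $0<t\leq 1$, while the bound for Proposition \ref{aprop1} is immediate since the supremum over integers is dominated by the supremum over $\mathbb{R}$. The only cosmetic difference is that the paper performs the comparison term by term on the tail $k>2^{1/(\beta-\alpha)}$ (using monotonicity of the exponential on each $[k-1,k]$) whereas you split at the maximum of the unimodal summand; both are standard and equivalent here.
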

\begin{proof}
Let $m\geq 0$ and $s\in \mathbb{R}$. Since $|k|^{\alpha}-|k|^{\beta}\leq |k|^{\beta}/2$ when $|k|\geq 2^{\frac{1}{\beta-\alpha}}$, and $|k|^{\alpha}-|k|^{\beta} \leq 0$ for all integer $k$, we find
\begin{equation}\label{peridoesti}
\begin{aligned}
  \left\||k|^m\langle k \rangle^{s}e^{(|k|^{\alpha}-|k|^{\beta)t}}\right\|_{l^{2}(\mathbb{Z})}^2 &\lesssim \sum_{0\leq k\leq  2^{\frac{1}{\beta-\alpha}}}|k|^{2m}\langle k \rangle^{2s}+\sum_{  k > 2^{\frac{1}{\beta-\alpha}} } |k|^{2m}\langle k \rangle^{2s} e^{-|k|^{\beta}t} \\
        &\lesssim \sum_{0\leq k\leq  2^{\frac{1}{\beta-\alpha}} }|k|^{2m}\langle k \rangle^{2s}+\sum_{  k >  2^{\frac{1}{\beta-\alpha}}  } \int_{k-1}^k|\xi+1|^{2m}\langle \xi \rangle^{2s} e^{-|\xi|^{\beta}t}\, d\xi \\
        & \lesssim 1+\left\|\langle \xi \rangle^{s}e^{-\frac{|\xi|^{\beta}}{2}t}\right\|_{L^2(\mathbb{R})}^2+\left\||\xi|^m\langle \xi \rangle^{s}e^{-\frac{|\xi|^{\beta}}{2}t}\right\|_{L^2(\mathbb{R})}^2.
\end{aligned}
\end{equation}
Therefore, inequality \eqref{peridoesti} allow us to argue exactly as in the proof of Propositions \ref{aprop1} and \ref{aprop1.1} to derive analogous time decay estimates. Furthermore one can see that the factor $\psi_{\alpha,\beta}(t)$ is not needed to bound the exponential term $e^{-\frac{|\xi|^{\beta}}{2}t}$ in each of these estimates. Finally, since $0<t\leq 1$, the constant term on the right-hand side of \eqref{peridoesti} can be bounded by $t^{-r}$, for any $r>0$. 
\end{proof}

Gathering the above results we deduce that Proposition \ref{aprop2} is valid in the periodic setting. Thus, for the range $\beta>3/2$ with growth order $0<\alpha<\beta$, we can repeat the same arguments in the proof of  Theorem \ref{globalw} changing $H^s(\mathbb{R})$ and $Y^s_T$, respectively by $H^s(\mathbb{T})$ and $\tilde{Y}^s_T$. This concludes the GWP part of Theorem \ref{periodiccase}.

Next we show Theorem \ref{illpos} (i) for the periodic case. Here, we define the function $u_0$ via its Fourier series by
\begin{equation}
    \widehat{u_0}(k)= \left\{\begin{aligned}
     &N^{-s}, \hspace{0.5cm} &&\text{ if } k=N \text{ or } k=1-N, \\
     &0,  &&\text{otherwise},  \\
    \end{aligned}\right.
\end{equation}
for $N\gg 1$. Noting that $\sigma(1,N)=\sigma(1,1-N)$ (with $\sigma$ defined by \eqref{ipeq}), it is deduced that
\begin{equation}
\begin{aligned}
        \widehat{u_2}(1,t)
        &=N^{-2s}e^{-it}\frac{e^{\sigma(1,N)t}-1}{\sigma(1,N)}.
        \end{aligned}
\end{equation}
Therefore since $|\sigma(1,N)|\sim N^{\beta}$ for $N$ large, we can follow the ideas behind \eqref{IIPR8} with $t_N=N^{-\beta-\epsilon}$ to obtain
\begin{equation}\label{IT1}
    \begin{aligned}
    \left\|u_2(t_N)\right\|_{H^s}\gtrsim  \left|\widehat{u_2}(1,t_N)\right|\gtrsim N^{-2s-\beta-\epsilon}.
    \end{aligned}
\end{equation}
Thus, \eqref{IT1} contradicts \eqref{IIP1} ($k=2$) given that $\left\|u_0\right\|_{H^{s}}\sim 1$ and $s<-\beta/2$ with  $0<\epsilon \ll 1$.
\\ \\
Finally, we discuss how to extend the conclusions of Proposition \ref{contInsta} to the periodic setting. In this context one can follow the same ideas dealing with the real line case, employing the periodic Kato-Ponce type inequality deduced in \cite{PeriKP} and replacing Lebesgue dominated convergence theorem by Weierstrass M-test. It is worth to emphasize that since $|k|^{\alpha}-|k|^{\beta}\leq 0$ for all integer $k\in \mathbb{Z}$ some of the estimates in the proof of Proposition \ref{contInsta} are simplified in the present case. For instance, the corresponding equation to \eqref{wleq0} satisfies
\begin{equation}
    \int_{\mathbb{T}}( D_x^{\alpha}-D_x^{\beta})J^{s_0}u^{\alpha}(t)J^{s_0}u^{\alpha}(t) \, dx= \sum_k (|k|^{\alpha}-|k|^{\beta})|\widehat{J^{s_0}u^{\alpha}}(k,t)| \leq 0
\end{equation}
and in a similar way, the estimate \eqref{EstimateI} adapted to $H^s(\mathbb{T})$ assures that $\mathcal{I}_{\alpha}\leq 0$.
This encloses all the conclusions stated in Theorem \ref{periodiccase}. 

\subsection*{Acknowledgements}

The author O.R. acknowledges support from CNPq-Brazil. R.P. was supported by the Universidad Nacional de Colombia, Bogot\'a.


\bibliographystyle{amsplain}

\end{document}